\theoremstyle{plain}
\newtheorem{thm}{Theorem}[section]
\newtheorem{lem}[thm]{Lemma}
\newtheorem{lemma}[thm]{Lemma}
\theoremstyle{definition}
\newtheorem{defn}{Definition}[section]
\theoremstyle{remark}
\newtheorem{remark}{Remark}
\renewcommand{\C}{\mathbb C}
\newcommand{\R}{\mathbb R}
\newcommand{\N}{\mathbb N}
\newcommand{\chitre}{\chi^{(3)}}
\newcommand{\PT}{\mathcal{P}\mathcal{T}}
\newcommand{\CyrB}{\mbox{\usefont{T2A}{\rmdefault}{m}{n}\CYRB}}
\newcommand{\cyrB}{\mbox{\scriptsize{\usefont{T2A}{\rmdefault}{m}{n}\CYRB}}}
\newcommand{\Zhe}{\mbox{\usefont{T2A}{\rmdefault}{m}{n}\CYRZH}}
\def\ri{{\rm i}}
\def\astB{\ast_\B}
\def\kj{k^{(j)}}
\newcommand{\ko}{{k^0}}
\def\kalpha{k^{(\alpha)}}
\def\eps{\varepsilon}
\def\kbeta{k^{(\beta)}}
\def\kgamma{k^{(\gamma)}}
\def\sumjN{\sum_{j=1}^N}
\def\sumabgN{\sum_{\alpha,\beta,\gamma=1}^N}
\def\sumabN{\sum_{\alpha,\beta=1}^N}
\def\sumabcd{\sum_{a,b,c,d=1}^3}
\def\Pke{P_k^\epsilon}
\def\Qke{Q_k^\epsilon}
\def\ePk{\prescript{\epsilon}{}{P_k}}
\def\eQk{\prescript{\epsilon}{}{Q_k}}
\def\bi{\begin{itemize}}
\def\ei{\end{itemize}}
\newcommand{\B}{\mathbb{B}}
\newcommand{\Z}{\mathbb{Z}}
\newcommand{\mK}{\mathbb K}
\newcommand{\cA}{{\mathcal A}}
\newcommand{\cB}{{\mathcal B}}
\newcommand{\cG}{{\mathcal G}}
\newcommand{\cH}{{\mathcal H}}
\newcommand{\cK}{{\mathcal K}}
\newcommand{\cL}{{\mathcal L}}
\newcommand{\cN}{{\mathcal N}}
\newcommand{\cO}{{\mathcal O}}
\newcommand{\cQ}{{\mathcal Q}}
\newcommand{\cT}{{\mathcal T}}
\newcommand{\cW}{{\mathcal W}}
\newcommand{\cX}{{\mathcal X}}
\renewcommand{\dim}{{\rm dim}\,}
\newcommand{\pa}{\partial}
 \def\dd{\, {\rm d}}
\newcommand{\hA}{\widehat A}
\newcommand{\hB}{\widehat B}
\newcommand{\hC}{\widehat C}
\newcommand{\hL}{\widehat L}
\newcommand{\hR}{\widehat R}
\newcommand{\hv}{\widehat v}
\newcommand{\ha}{\widehat a}
\newcommand{\hb}{\widehat b}
\newcommand{\hbj}{\widehat b_j}
\newcommand{\hcN}{\widehat\cN}
\newcommand{\hbA}{\widehat{\overline A}}
\newcommand{\hbB}{\widehat{\overline B}}
\newcommand{\hbC}{\widehat{\overline C}}
\newcommand{\Bp}{B^*}
\newcommand{\hBp}{\widehat{B}^*}
\newcommand{\hbBp}{\widehat{\overline B}^*}
\newcommand{\hCp}{\widehat{C}^*}
\newcommand{\tF}{\widetilde F}
\newcommand{\tf}{\widetilde f}
\newcommand{\tilg}{\widetilde g}
\newcommand{\tu}{\widetilde u}
\newcommand{\tv}{\widetilde v}
\newcommand{\tw}{\widetilde w}
\newcommand{\tvB}{\widetilde v_B}
\newcommand{\tvC}{\widetilde v_C}
\newcommand{\tvBj}{\widetilde v_{B_j}}
\newcommand{\tbu}{\widetilde{\overline u}}
\newcommand{\tbv}{\widetilde{\overline v}}
\newcommand{\Ae}{\hA^\varepsilon}
\newcommand{\bAegamma}{\widehat{\overline{A^\varepsilon_\gamma}}}
\newcommand{\Aej}{\hA^\varepsilon_j}
\newcommand{\hae}{\ha^\varepsilon}
\newcommand{\haej}{\ha^\varepsilon_j}
\newcommand{\trans}{{\mathsf T}}
\DeclareMathOperator{\Id}{Id}
\DeclareMathOperator{\spann}{span}
\DeclareMathOperator*{\esssup}{ess\,sup}
\DeclareMathOperator{\Real}{Re}
\DeclareMathOperator{\Imag}{Im}
\DeclareOldFontCommand{\it}{\normalfont\itshape}{\mathit}
\newcommand{\icol}[1]{
	\left(\begin{smallmatrix}#1\end{smallmatrix}\right)%
}
\def\barl#1{\overline{#1}}
\newcommand{\dist}{\text{\rm dist}}
\newcommand{\supp}{\text{\rm supp}}
\newcommand{\bspm}{\left(\begin{smallmatrix}}\newcommand{\espm}{\end{smallmatrix}\right)}
\newcommand{\bpm}{\begin{pmatrix}}\newcommand{\epm}{\end{pmatrix}}
\newcommand*\bigcdot{\mathpalette\bigcdot@{.3}}
\newcommand*\bigcdot@[2]{\mathbin{\vcenter{\hbox{\scalebox{#2}{$\m@th#1\bullet$}}}}}
\def\blem{\begin{lemma}}\def\elem{\end{lemma}}
\def\bthm{\begin{theorem}}\def\ethm{\end{theorem}}
\def\bcor{\begin{corollary}}\def\ecor{\end{corollary}}
\def\beq{\begin{equation}}\def\eeq{\end{equation}}
\def\beqq{\begin{equation*}}\def\eeqq{\end{equation*}}
\def\bal{\begin{align}}\def\eal{\end{align}}
\def\bpf{\begin{proof}}\def\epf{\end{proof}}
\def\bex{\begin{example}}\def\eex{\end{example}}
\def\brem{\begin{remark}}\def\erem{\end{remark}}
\def\bass{\begin{assumption}}\def\eass{\end{assumption}}
\def\bprop{\begin{proposition}}\def\eprop{\end{proposition}}
\def\bdefi{\begin{defn}}\def\edefi{\end{defn}}
\DeclareRobustCommand\widecheck[1]{{\mathpalette\@widecheck{#1}}}
\def\@widecheck#1#2{%
	\setbox\z@\hbox{\m@th$#1#2$}%
	\setbox\tw@\hbox{\m@th$#1%
		\widehat{%
			\vrule\@width\z@\@height\ht\z@
			\vrule\@height\z@\@width\wd\z@}$}%
	\dp\tw@-\ht\z@
	\@tempdima\ht\z@ \advance\@tempdima2\ht\tw@ \divide\@tempdima\thr@@
	\setbox\tw@\hbox{%
		\raise\@tempdima\hbox{\scalebox{1}[-1]{\lower\@tempdima\box
				\tw@}}}%
	{\ooalign{\box\tw@ \cr \box\z@}}}
\numberwithin{equation}{section}
\begin{document}

\title{Justification of the Asymptotic Coupled Mode Approximation of Out-of-Plane Gap Solitons in Maxwell Equations}
\date{\today}
\author{Tom\'a\v{s} Dohnal and Giulio Romani}
\affil{Institut f\"{u}r Mathematik,  Martin-Luther-Universit\"{a}t Halle-Wittenberg, 06099 Halle (Saale), Germany\\
\small{tomas.dohnal@mathematik.uni-halle.de, giulio.romani@mathematik.uni-halle.de}}

\maketitle

\abstract{In periodic media gap solitons with frequencies inside a spectral gap but close to a spectral band can be formally approximated by a slowly varying envelope ansatz. The ansatz is based on the linear Bloch waves at the edge of the band and on effective coupled mode equations (CMEs) for the envelopes. We provide a rigorous justification of such CME asymptotics in two-dimensional photonic crystals described by the Kerr nonlinear Maxwell system. We use a Lyapunov-Schmidt reduction procedure and a nested fixed point argument in the Bloch variables. The theorem provides an error estimate in $H^2(\R^2)$ between the exact solution and the envelope approximation. The results justify the formal and numerical CME-approximation in [Dohnal and D\"orfler, Multiscale Model. Simul., p. 162-191,
11 (2013)].}
\vskip0.1truecm
\noindent
\textbf{Keywords}: Maxwell equations, Kerr nonlinearity, photonic crystal, gap soliton, amplitude equations, envelope approximation, Lyapunov Schmidt decomposition.

\vskip0.2truecm


\section{Introduction}\label{S:intro}

Maxwell's equations in Kerr nonlinear dielectric materials without free charges are described by
\begin{equation}\label{Max}
\begin{split}
\mu_0\partial_tH=-\nabla\times E,\qquad\epsilon_0\partial_t D=\nabla\times H,\qquad\nabla\cdot D=\nabla\cdot H=0,
\end{split}
\end{equation}
where $E=(E_1,E_2,E_3)$ and $H=(H_1,H_2,H_3)$ are the electric and the magnetic field respectively, $D=(D_1(E),D_2(E),D_3(E))$ is the electric displacement field and $\epsilon_0$ and $\mu_0$ are the permittivity and the permeability of the free space, respectively. We assume the constitutive relations 
\begin{equation*}\label{constit_rel}
D(x,t)=\epsilon_0(\epsilon(x_1,x_2)E(x,t)+\mathcal F(E)(x,t)), \quad x\in\R^3,\; t\in\R,
\end{equation*}
where
\begin{equation}\label{cF}
\mathcal F_d(E)(x,t):=\sum_{a,b,c\,=1}^3\chi^{(3)}_{a,b,c,d}(x_1,x_2)(E_aE_bE_c)(x,t), \quad d\in \{1,2,3\}.
\end{equation}
We model a two dimensional photonic crystal and hence assume that the dielectric function (relative permittivity) $\epsilon:\R^2\to\R$ and the cubic electric susceptibility $\chi^{(3)}:\R^2\to\R^{3\times 3\times 3\times 3}$ are periodic and $\epsilon$ is positive. The periodicity is specified by two linearly independent lattice vectors $a^{(1)},a^{(2)}\in\R^2$ defining the \textit{Bravais lattice} $\Lambda:=\spann_\Z\{a^{(1)},a^{(2)}\}$ of the crystal. Then the required periodicity reads
\beq\label{E:eps-per}
\epsilon(x)=\epsilon(x+R), \;\;\, \chi^{(3)}(x)=\chi^{(3)}(x+R) \qquad\mbox{for any }\,x\in\R^2\;\mbox{ and }\,R\in\Lambda.\eeq
Since $\epsilon=\epsilon(x_1,x_2)$ and $\chi^{(3)}=\chi^{(3)}(x_1,x_2)$, the material is homogeneous in the $x_3$-direction.
In the following $\cQ\subset\R^2$ denotes the Wigner-Seitz periodicity cell, defined as the set of points in $\R^2$ which are closer to $0$ than to any other lattice point in $\Lambda$.  More precisely,
$$\cQ:=\{x\in\R^2\,|\,|x|<|x-Z|,\,Z\in\Lambda\setminus\{0\}\}\cup S,$$
where the connected subset $S\subset\partial\cQ$ is chosen so that $\bigcup_{Z\in\Lambda}(\cQ+Z)=\R^2$ and $\cQ\cap(\cQ+Z)=\emptyset$ for all $Z\in\Lambda$. We often use the term $\Lambda-$periodic to mean the periodicity as in \eqref{E:eps-per}.

We consider monochromatic waves propagating in the homogeneous $x_3$-direction, i.e. out of the plane of periodicity of the 2D crystal, and use the ansatz
\begin{equation}\label{monochromatic}
(E,H)(x,t)=e^{\ri(\kappa x_3-\omega t)}(u,h)(x_1,x_2;\omega)+\text{c.c.}
\end{equation}
where $\kappa\in\R$ and c.c. denotes the complex conjugate. We look for profiles $u,h$ localized in both $x_1$ and $x_2$ and with $\omega$ in a frequency gap. The resulting solutions  are called \textit{out-of-plane gap solitons}. Inserting such a monochromatic ansatz into the nonlinearity \eqref{cF} and neglecting the higher harmonics\footnote{Neglecting higher harmonics is a common approach in theoretical studies of weakly nonlinear optical waves \cite{shen1984}. Alternatively, one can use a time averaged model for the nonlinear part of the displacement field, see \cite{Stuart_93,Sutherland03,DD}, where no higher harmonics appear.}, one obtains
\begin{equation*}\label{F}
\mathcal{F}_d(E)(x,t)=\sum_{a,b,c=1}^3\chi^{(3)}_{a,b,c,d}(x_1,x_2)(\overline{u}_a u_b u_c + u_a \overline{u}_b u_c+ u_au_b\overline{u}_c)(x_1,x_2)e^{\ri(\kappa x_3-\omega t)}+\text{c.c.}, \quad d\in \{1,2,3\}\,.
\end{equation*}
We define 
$$
\begin{aligned}
F_d(u)&:=\sum_{a,b,c=1}^3\chi^{(3)}_{a,b,c,d}(\overline{u}_a u_b u_c + u_a \overline{u}_b u_c+ u_au_b\overline{u}_c)=\sum_{a,b,c=1}^3\underline{\chi}^{(3)}_{a,b,c,d}u_au_b\overline{u}_c, \quad d\in \{1,2,3\},
\end{aligned}
$$
where
$$\underline{\chi}^{(3)}_{a,b,c,d}:=\chi^{(3)}_{c,b,a,d}+\chi^{(3)}_{a,c,b,d}+\chi^{(3)}_{a,b,c,d}\,.$$
We rescale the frequency by defining $\tilde\omega=\frac\omega c$, where $c=(\mu_0\epsilon_0)^{-1/2}$, but drop the tilde again for better readability. Then, with the ansatz in \eqref{monochromatic} Maxwell's equations \eqref{Max} become
\begin{equation}\label{System}
\begin{cases}
\nabla'\times u-\ri\mu_0c\omega h=0,\\
\nabla'\times h+\ri\epsilon_0c\omega\epsilon(\cdot)u=\ri\epsilon_0c\omega F(u),
\end{cases}
\end{equation}
where $\nabla':=\icol{\partial_1\\\partial_2\\\ri\kappa}$ is the restriction of the standard $\nabla$ applied to our 2D-ansatz \eqref{monochromatic}. Notice that, indeed, the divergence equations in \eqref{Max} are automatically satisfied by our ansatz. Equivalently, we may write \eqref{System} as a second-order equation for the electric field $u$,
\begin{equation}\label{eq}
(L^{(E)}-\omega^2\epsilon(\cdot))u:=\nabla'\times\nabla'\times u-\omega^2\epsilon(\cdot)u=\omega^2F(u),
\end{equation}
and then, having determined a solution $u$, the magnetic field can be recovered by
\begin{equation*}
h=-\tfrac\ri{\mu_0c\omega}\left(\nabla'\times u\right).
\end{equation*}
For any $\omega$ in a spectral gap of the linear problem $(L^{(E)}-\omega^2\epsilon(\cdot))u=0$ equation \eqref{eq} is expected to have localized solutions $u$ with $u(x_1,x_2)\to 0$ as $|(x_1,x_2)|\to \infty$, called \textit{gap solitons}. This has been proved variationally for other problems, e.g. the periodic Gross-Pi\-ta\-ev\-skii equation, see \cite{Pankov05}, or equation \eqref{eq} with other (not periodic) coefficients $\epsilon$ and $\chi^{(3)}$, see e.g. \cite{BDPR2014,Mederski_2016}. From the physics point of view, gap solitons are phenomenologically interesting as they achieve a balance between the periodicity induced dispersion and the focusing or defocusing of the nonlinearity. In addition, they exist for frequencies in spectral gaps, i.e. where no linear propagation is possible. Examples of physics references for gap solitons in two dimensions are \cite{Fleischer2003} or \cite[Sec. 16.6]{slusher2003nonlinear}.

In \cite{DD} an approximation of gap solitons of \eqref{eq} with periodic coefficients and for $\omega$ in an asymptotic vicinity of a gap edge was formally obtained using a slowly varying envelope approximation. In particular, envelopes of such gap solitons satisfy a system of nonlinear equations with constant coefficients, so-called \textit{couple mode equations} (CMEs), posed in a slow variable. The advantage is that such a system can be numerically solved with less effort than the original Maxwell system \eqref{eq}, which is posed in the ``fast'' variable $x$. Then, the solution of \eqref{eq} for $\omega$ near a band edge would be asymptotically approximated by the sum of linear Bloch waves at the edge modulated by the corresponding envelopes. The aim of this paper is to give a rigorous justification of this approximation.
\vskip0.2truecm

Let us now describe the approximation in more detail. First, recall that the (first) Brillouin zone (here denoted by $\B\subset\R^2$) is the Wigner-Seitz periodicity cell for the reciprocal lattice $\Lambda^*:=\spann_\Z\{b^{(1)},b^{(2)}\}\subset\R^2$. The vectors $b^{(1)},b^{(2)}$ satisfy $a^{(i)}\cdot b^{(j)}=2\pi\delta_{ij}$ for $i,j\in\{1,2\}$, with $\delta_{ij}$ being the Kronecker-delta.

Let now $\omega_*$ be a boundary point of the (real) spectrum of the pencil $L^{(E)}-\omega^2\epsilon$, i.e. such that 
there is a choice of $\Omega\in\{-1,+1\}$ for which $\omega_*+\tau\Omega$ lies outside and $\omega_*-\tau\Omega$ inside the spectrum for all $\tau>0$ small enough. We are interested in studying solutions of equation \eqref{eq} when $\omega$ lies in a band gap and is asymptotically close to $\omega_*$. Hence, in our main result (Theorem \ref{Thm_main}), we choose a small parameter $0<\eps\ll 1$ and set
\begin{equation}\label{omega1}
\omega=\omega_*+\Omega\varepsilon^2,
\end{equation}
where $\Omega\in\{-1,+1\}$ is chosen such that $\omega$ lies outside the spectrum.
We aim to study the existence of a solution of \eqref{eq} close to the \textit{slowly varying envelope} ansatz
\begin{equation}\label{ansatz_phys_var}
u_\text{ans}(x)=\varepsilon\sum_{j=1}^N A_j(\varepsilon x)u_{n_*}(x, k^{(j)}).
\end{equation}
Here $(A_j)_{j=1}^N$ are localized envelopes to be determined below and the function $u_{n_*}(x,k)$, for $n_*\in\N$, is a Bloch wave of $L^{(E)}-\omega_*^2\epsilon$. Writing 
$$\nabla'_k:=\nabla'+\ri k \quad \text{for} \ k\in\R^2,$$ 
a \textit{Bloch wave} $u_n(\cdot,k):\R^2\times\R^2\to\C^3$ is defined as $$u_n(x,k):=p_n(x,k)e^{\ri k\cdot x}, \qquad n\in\N,$$ 
where $p_n(\cdot,k)$ is a solution of the periodic eigenvalue problem
\begin{equation}\label{eq_linear_Bloch}
\begin{split}
\nabla'_k\times\nabla'_k\times p_n(x,k)&=\omega_n(k)^2\epsilon(x)p_n(x,k),\qquad\mbox{for all}\,\,x\in\R^2,\\
p_n(x+R,k)&=p_n(x,k)\qquad\mbox{for all}\,\,x\in\R^2\,\,\mbox{and all}\,\,R\in\Lambda,
\end{split}
\end{equation}
which is to be solved for the eigenpair $(\omega_n(k),p_n(\cdot,k))$. Let us choose the normalization
$$\int_\cQ\epsilon(x)|p_n(x,k)|^2\dd x=1, \ n\in \N,$$ 
see \eqref{orthog_pn} below. Note that for a geometrically simple eigenvalue the eigenfunction $p_n(\cdot,k)$ is unique up to a phase factor $e^{\ri \alpha},\alpha\in\R$. We call $p_n(\cdot,k)$ a \textit{Bloch eigenfunction}.  Clearly, Bloch waves are quasiperiodic
$$u_n(x+R,k)=u_n(x,k)e^{\ri k\cdot R}\qquad\mbox{for all}\,\,x\in\R^2\,\,\mbox{and all}\,\,R\in \Lambda.$$
Note that, strictly speaking, a ``Bloch wave'' is the time dependent function $u_n(x,k)e^{\ri \omega_n(k)t}$ but, for the purpose of this paper, we use this name for the factor $u_n(x,k)$.

We assume that the band structure $k\mapsto \{\omega_n(k): n\in \N\}$ attains the value $\omega_*$ at finitely many points, denoted $k^{(1)},\dots,k^{(N)} \in \B$, see \eqref{ansatz_phys_var}. In other words, there exist indices $(n_j)_{j=1}^N$ for which $\omega_{n_j}(\kj)=\omega_*$ holds, with $k\mapsto\omega_{n_j}(k)$ defined by \eqref{eq_linear_Bloch}. Notice that since at each $k\in \B$ the eigenvalues $\omega_n(k)$ are ordered by magnitude, we necessarily have $\omega_{n_*}(k^{(j)})=\omega_*$ for some $n_*\in\N$ and all $j\in\{1,\dots,N\}$. Due to the additional assumption of geometric simpleness at $k\in \{k^{(1)},\dots,k^{(N)}\}$ at the level $\omega_*$, there is only one Bloch eigenfunction (up to a complex phase factor) at  $k\in \{k^{(1)},\dots,k^{(N)}\}$ at the level $\omega_*$. We denote this eigenfunction by $p_{n_*}(\cdot,k^{(j)})$. For a precise formulation of the corresponding assumptions see assumption (A3) in Sec. \ref{Section_AssMR}. Also note that the spectrum of the pencil $L^{(E)}-\omega^2\epsilon$ equals the union of the ranges of the functions $\omega_n$ over all $n$, see Sec. \ref{S:spec-H} and \ref{S:spec-E}.

\begin{figure}[h!]
\begin{center}
    \begin{minipage}[c]{0.25\textwidth}
    \centering
    (a)\\
		\includegraphics[width=0.85\linewidth]{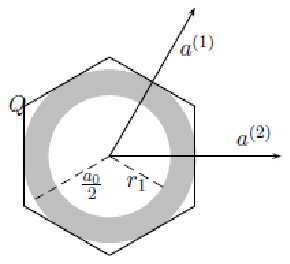}
    \end{minipage}
   \begin{minipage}[c]{0.25\textwidth}
    \centering
    (b)\\
		\includegraphics[width=0.77\linewidth]{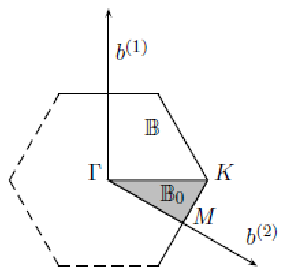}
    \end{minipage}
		\begin{minipage}[c]{0.48\textwidth}
    \centering
    (c)\\
		\includegraphics[width=\linewidth]{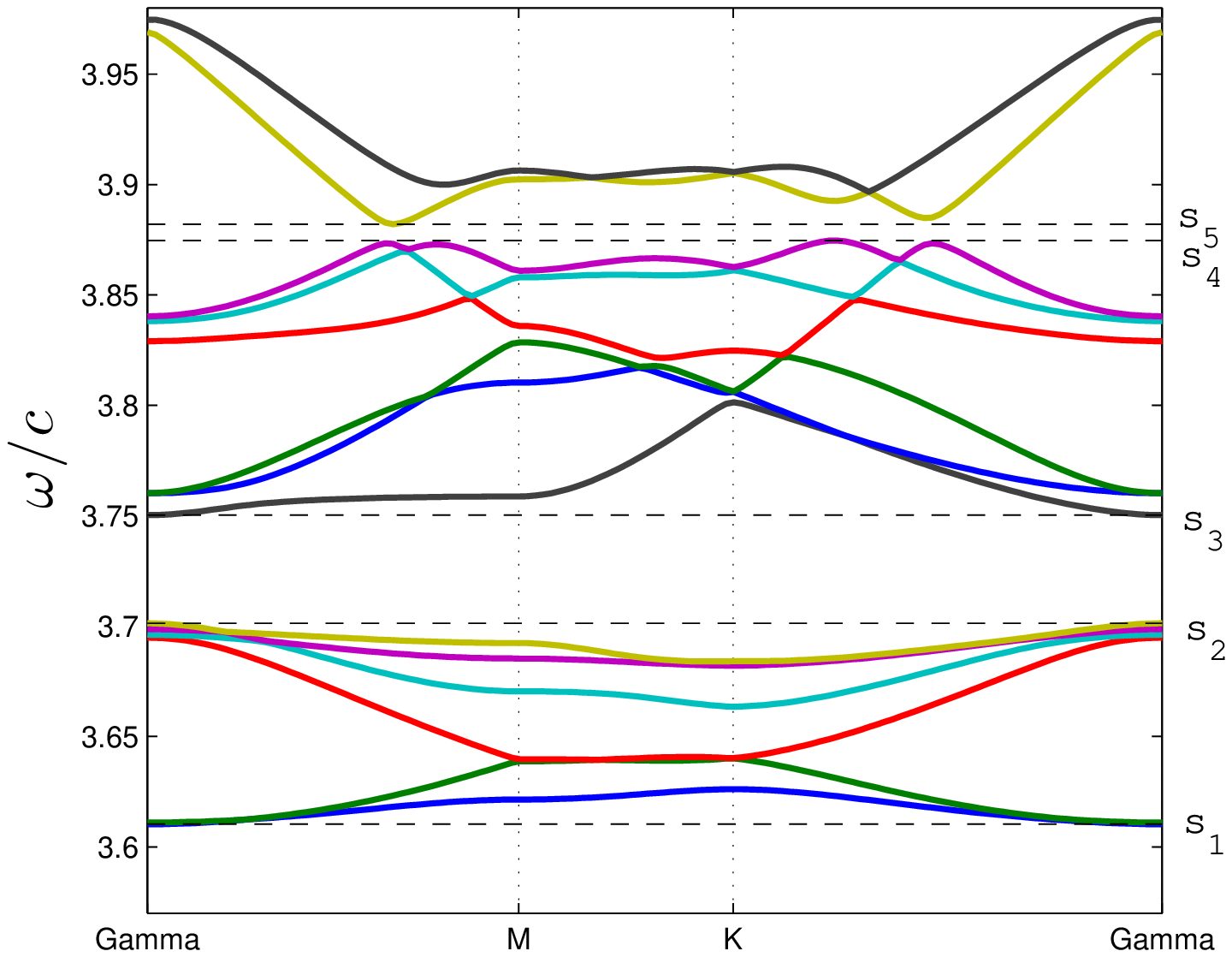}
		\end{minipage}
	\end{center}
 \caption{\label{F:crystal}\small%
(taken from \cite{DD};  Copyright $\copyright$2013 Society for Industrial and Applied Mathematics.  Reprinted with permission.  All rights reserved.) (a) Hexagonal periodicity cell $\cQ$ with a cylindrical material structure and the lattice vectors $a^{(1)},a^{(2)}$. (b) The corresponding Brillouin zone $\B$ (not to scale) with the reciprocal lattice vectors $b^{(1)},b^{(2)}$ and a shaded irreducible Brillouin zone $\B_0$. (c) Band structure along $\pa\B_0$ for $r_1= \tfrac{1.31}{4.9}a_0$, $\epsilon(x)=2.1025$ for $r_1\leq |x|\leq a_0/2$, and $\epsilon(x)=0$ otherwise. The distance $a_0>0$ between neighboring cylinders can be chosen arbitrary.}
\end{figure}
In Fig. \ref{F:crystal} we plot the material structure, the Brillouin zone, and the band structure for an example adopted from \cite{DD}. Only the band structure along the boundary of an ``irreducible'' Brillouin zone $\B_0$ is plotted, which is standard practice in the physics literature. It was checked in \cite{DD} that the level sets of the first five band edges do not include any points from the interior of $\B_0$. Five spectral edges $s_1,\dots, s_5$ are labeled. Note that, for instance, the edge $\omega_*=s_3$ has $N=1$ as the level set includes only the point $k^{(1)}=\Gamma=(0,0)$. At $\omega_*=s_5$ we have $N=6$ because the minimal point along the line $\Gamma-M$ is repeated 6 times in the full Brillouin zone $\B$ due to a discrete rotational symmetry of the lattice.

As shown in \cite{DD}, in order for the residual $L^{(E)}u_\text{ans}-\omega^2\epsilon u_\text{ans} -\omega^2F(u_\text{ans})$ to be small, the functions $\left(A_j\right)_{j=1}^N$ in \eqref{ansatz_phys_var} have to satisfy the second-order CMEs
\begin{equation}\label{CMEs_phys}
\Omega A_j+\frac12\big(\partial_{k_1}^2\omega_{n_*}(\kj)\partial_{y_1}^2+\partial_{k_2}^2\omega_{n_*}(\kj)\partial_{y_2}^2+2\partial_{k_1}\partial_{k_2}\omega_{n_*}(\kj)\partial_{y_1}\partial_{y_2}\big)A_j+\cN_j=0, \quad j=1,\dots,N
\end{equation}
in $\R^2$, where $y:=\varepsilon x$ is the slow variable and the nonlinear term $\cN_j$ is given by
\begin{equation}\label{Nj}
\cN_j=\sum_{(\alpha,\beta,\gamma)\in\sigma_j} I_{\alpha,\beta,\gamma}^{\,j}A_\alpha A_\beta \overline{A_\gamma}.
\end{equation}
where 
\begin{equation}\label{sigma_j}
\sigma_j :=\{(\alpha,\beta,\gamma)\in \{1,\dots,N\}^3: \kalpha+\kbeta-\kgamma-\kj \in\Lambda^*\}.
\end{equation}
The coefficients $I_{\alpha,\beta,\gamma}^{\,j}$ are determined by the Bloch wave $u_{n_*}$ at the points $\kj$, in detail
\begin{equation} \label{I_abg^j}
\begin{array}{rl}
I_{\alpha,\beta,\gamma}^{\,j}
:= \displaystyle
\frac{\omega_*}{2}\sum_{a,b,c,d=1}^3 \langle
\chi^{(3)}_{a,b,c,d}u_{n_*,a}(\cdot,k^{(\alpha)})u_{n_*,b}(\cdot,k^{(\beta)})
\barl{u_{n_*,c}}(\cdot,k^{(\gamma)}),u_{n_*,d}(\cdot,k^{(j)})\rangle.
\end{array}
\end{equation}

The formal derivation of the CMEs \eqref{CMEs_phys} as an effective model for the envelopes $A_j$ can be summarized as follows. First, ansatz \eqref{ansatz_phys_var} is inserted into \eqref{eq} and for each $j$ the terms proportional to $e^{\ri \kj\cdot x}$ times a $\Lambda$-periodic function are collected. Then, setting the $L^2(\cQ)$-inner product of the leading order part of these terms with $u_{n_*}(\cdot,\kj)$ to zero, produces the $j$-th equation in \eqref{CMEs_phys}. In the inner product the variable $y:=\eps x$ is considered independent of $x$.

For several examples with the coefficients $\nabla^2\omega_{n_*}(\kj)$ and $I^j_{\alpha,\beta,\gamma}$ obtained from actual Bloch waves of the corresponding Maxwell problem, localized solutions were found numerically in \cite{DD}. Fig. \ref{F:GS-approx} (a), (b) shows an example solution of CMEs \eqref{CMEs_phys} corresponding to $\omega_*=s_5$ in Fig. \ref{F:crystal}.  We also plot the total intensity $|u_\text{ans}|^2$ of the corresponding formal approximation \eqref{ansatz_phys_var} at $\omega=\omega_*-\eps^2$ with $\eps=0.1$  in
Fig. \ref{F:GS-approx} (c). All plots in Fig. \ref{F:GS-approx} are adopted from \cite{DD}.
\begin{figure}[h!]
\begin{center}
\includegraphics[scale=1.6]{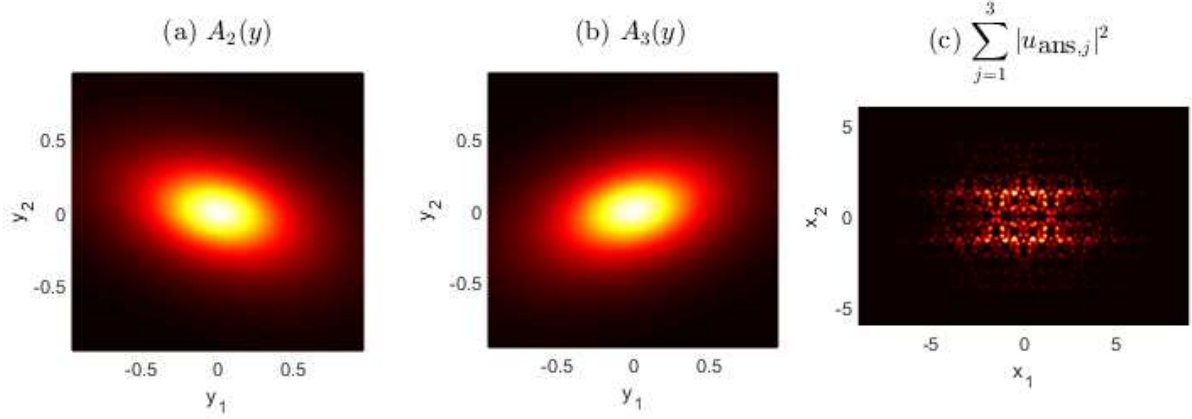}
\end{center}
 \caption{\label{F:GS-approx}\small%
(taken from \cite{DD};  Copyright $\copyright$2013 Society for Industrial and Applied Mathematics.  Reprinted with permission.  All rights reserved.) (a), (b) Components $A_2$ and $A_3$ of CMEs for the case $\omega_*:=s_5$ in Fig. \ref{F:crystal}, where $N=6$. A solution with the symmetry $A_1=A_4=0$, $A_5=A_2$, and $A_6=A_3$ was chosen. (c) The approximation of the square modulus of a gap soliton at $\omega=\omega_*-\eps^2$ with $\eps=0.1$ as given by $|u_{\text{ans},1}|^2+|u_{\text{ans},2}|^2+|u_{\text{ans},3}|^2$, see \eqref{ansatz_phys_var}.}
\end{figure}

The main result of this paper is that for $\eps>0$ small enough the existence of suitable solutions of CMEs \eqref{CMEs_phys} implies the existence of gap solitons of \eqref{eq} with $\omega$ given by \eqref{omega1}. These gap solitons are approximated by the ansatz \eqref{ansatz_phys_var}. The following theorem uses assumptions (A1)-(A7) on the band structure and on the functions $\epsilon$ and $\chitre$, see Sec. \ref{Section_AssMR}, the non-degeneracy property, see Def. \ref{D:nondegen} as well as the $\PT$ (parity-time reversal) symmetry defined below.
\begin{defn}
A function $A\in L^2(\R^n)$ is called $\PT$-symmetric if $A=\overline{A(-\,\cdot)}$.	
\end{defn}

\begin{thm}\label{Thm_main}
	Let $\kappa\in \R\setminus \{0\}$. Suppose $A=(A_j)_{j=1}^N\in H^{s_A}(\R^2,\C^N)$ is a $\PT$-symmetric non-degenerate solution of the CMEs \eqref{CMEs_phys} with $s_A>1$. Then, under assumptions (A1)-(A7), see Sec. \ref{Section_AssMR}, there are constants $c>0,\Omega \in \{-1,1\}$, and $\varepsilon_0>0$ such that for each $\varepsilon\in (0,\varepsilon_0)$ there exists a $\PT$-symmetric solution $u\in H^2(\R^2,\C)$ of the reduced Maxwell equation \eqref{eq} with $\omega$ as in \eqref{omega1}, which satisfies
	\begin{equation*}
	\|u-u_\text{ans}\|_{H^2(\R^2)}\leq c\,\varepsilon,
	\end{equation*}
	where $u_\text{ans}$ is defined by \eqref{ansatz_phys_var}.
\end{thm}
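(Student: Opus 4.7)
The strategy, as indicated by the abstract, is a Lyapunov--Schmidt reduction combined with a nested fixed point argument in Bloch variables. I would first set $u=u_\text{ans}+w$ and cast \eqref{eq} as
$$(L^{(E)}-\omega^2\epsilon)\,w \;=\; \omega^2\bigl[F(u_\text{ans}+w)-F(u_\text{ans})\bigr] + \cR,$$
where $\cR := \omega^2 F(u_\text{ans}) - (L^{(E)}-\omega^2\epsilon)u_\text{ans}$ is the residual of the ansatz. Since $A$ solves the CMEs \eqref{CMEs_phys}, which in \cite{DD} were obtained precisely as the solvability conditions that eliminate the leading resonant parts of $\cR$ after projection onto $p_{n_*}(\cdot,\kj)$, a careful expansion in the scaled variable $y=\eps x$ should give a quantitative smallness bound for $\cR$ in the norms relevant for the fixed point (typically $O(\eps^2)$ or better once one divides out the $\eps^2$ scaling of the reduced linear operator).

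Next I would apply the Bloch transform to diagonalize $L^{(E)}-\omega^2\epsilon$ into the family of fibers $\hat H^{(k)}:=\nabla'_k\times\nabla'_k\times\,-\,\omega^2\epsilon$ on $L^2_\text{per}(\cQ)$, whose eigenpairs are $(\omega_n(k)^2-\omega^2,\,p_n(\cdot,k))_{n\in\N}$. Because $\omega\to\omega_*$, the only near-zero eigenvalue is $\omega_{n_*}(k)^2-\omega^2$ when $k$ is close to one of the critical points $\kj$. Fixing a small $\delta>0$ I decompose $w=w_c+w_r$, where on the Bloch side $w_c$ is the $p_{n_*}(\cdot,k)$-component with quasi-momentum in $\bigcup_j B_\delta(\kj)$ and $w_r$ is the orthogonal complement. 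On the complement the fibers $\hat H^{(k)}$ are uniformly invertible with $O(1)$ bound, and the nonlinear perturbation is small since pointwise $DF[u_\text{ans}]=O(\eps^2)$; thus an inner Banach fixed point yields $w_r=\cT(w_c)$ with $\cT$ Lipschitz with small constant in $H^2$.

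Substituting $w_r=\cT(w_c)$ into the critical subsystem produces the reduced equation for $w_c$. Writing $w_c(x)=\eps\sum_{j=1}^N B_j(\eps x)\,u_{n_*}(x,\kj)$ and using $\nabla\omega_{n_*}(\kj)=0$ at the extremum, the band expansion
$$\omega_{n_*}(\kj+\eps K)^2-\omega^2 \;=\; \eps^2\,\omega_*\bigl[\nabla^2\omega_{n_*}(\kj)K\cdot K - 2\Omega\bigr] + O(\eps^3)$$
shows that the Fourier symbol of the reduced linear operator coincides, up to the prefactor $\eps^2$, with the one in \eqref{CMEs_phys}. The cubic nonlinearity $F$, written as a convolution on the Bloch side, contributes at leading order only resonant terms with indices $(\alpha,\beta,\gamma)\in\sigma_j$ from \eqref{sigma_j} and coefficients $I^j_{\alpha,\beta,\gamma}$ from \eqref{I_abg^j}; everything else is non-resonant and absorbed into $\cT(w_c)$ or into higher-order correctors. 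Setting $\tilde A_j:=A_j+B_j$, the reduced equation takes the form $\cL_A B = \cE(A+B,\eps)$, where $\cL_A$ is the linearization of the CMEs at $A$ and $\|\cE\|_{H^{s_A-2}}\to 0$ as $\eps\to 0$. The non-degeneracy of $A$ together with the $\PT$-symmetry ensure that $\cL_A$ is invertible on the $\PT$-symmetric subspace, so an outer contraction produces a unique small $\PT$-symmetric $B$; unwinding gives $u=u_\text{ans}+w_c+\cT(w_c)$ with $\|w\|_{H^2(\R^2)}=O(\eps)$.

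The main obstacle I anticipate is the rigorous passage from the Bloch-side reduced equation to the CMEs in the rescaled variable. One must track, uniformly in $\eps$, both the multi-point resonance structure encoded by $\sigma_j$ (only convolutions with $\kalpha+\kbeta-\kgamma\equiv\kj$ modulo $\Lambda^*$ survive projection onto $p_{n_*}(\cdot,\kj)$) and the interplay between the fixed $\delta$-cutoff around each $\kj$ and the $\eps$-scale of the envelope Fourier support. A further delicate point is that $\nabla^2\omega_{n_*}(\kj)$ need not be sign-definite (mixed-type or saddle edges are allowed by (A1)--(A7)), so the invertibility of $\cL_A$ must rely entirely on the non-degeneracy hypothesis of Def.~\ref{D:nondegen} combined with the $\PT$-symmetric restriction; verifying that both the inner and the outer fixed-point maps preserve $\PT$-symmetry is a small but essential check and follows from the $\PT$-equivariance of $L^{(E)}$ and of $F$.
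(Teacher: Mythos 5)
Your overall architecture coincides with the paper's: Bloch variables, a Lyapunov--Schmidt splitting of the $n_*$-band near the points $\kj$ from the rest, an inner fixed point for the non-critical part, a perturbed CME for the envelope correction inverted on the $\PT$-symmetric subspace via non-degeneracy, and a final $O(\eps)$ accounting. However, the step you yourself flag as the main obstacle is precisely where the paper's quantitative devices are indispensable, and your outline does not contain them, so there is a genuine gap there. With a fixed $\delta$-window around each $\kj$, the rescaled band remainder is $O(\eps^3|K|^3)$ with $|K|$ ranging up to $\delta/\eps$, the mismatch between the frozen mode $p_{n_*}(\cdot,\kj)$ in $u_\text{ans}$ and the fiber mode $p_{n_*}(\cdot,k)$ is only $O(|k-\kj|)$, and the Fourier tails of $\hA_j$ outside the window are not negligible for $s_A$ barely above $1$; none of these is uniformly small relative to the CME terms without exploiting decay of the envelopes in Fourier space. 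The paper resolves this by (i) shrinking the critical support to $B_{\eps^r}(\kj)$, see \eqref{E:BjC-supp}, with $r\le 1-\tfrac1{s_A}$ chosen at the end to recover the $O(\eps)$ rate, (ii) keeping a separate $n_*$-band component $\hC$ supported on $\B\setminus\cup_j B_{\eps^r}(\kj)$, controlled through the resolvent bound \eqref{sup_inverse_Hessian}, and (iii) measuring $\hA$, $\hB_j$ and the correction $\hb$ in the weighted spaces $L^2_{s_A}$, $L^2_{s_B}$ (cf. \eqref{omegaR_estimate}, \eqref{r_sC_sB_conditions}) together with the Lipschitz continuity of $k\mapsto p_{n_*}(\cdot,k)$ in $H^2(\cQ)$ proved in the Appendix. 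Without (i)--(iii) your claim that the reduced equation reads $\cL_A B=\cE$ with $\cE\to0$ is not justified; moreover your proposed norm $H^{s_A-2}$ is of negative order for $s_A$ close to $1$, whereas the paper obtains the CME perturbation in plain $L^2$ of size $O(\eps)$ (see \eqref{Remainder_error}) and inverts the Jacobian from the weighted space $L^2_2$ to $L^2$ on the symmetric subspace, yielding \eqref{b:estimate}.

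A smaller point: your remark that mixed-type or saddle edges are admitted by (A1)--(A7) is a misreading. Assumption (A4) explicitly requires $\nabla^2\omega_{n_*}(\kj)$ to be definite (and at a gap edge a saddle is excluded anyway by (A2)--(A3)), and this definiteness is used in the proof, e.g.\ in \eqref{sup_inverse_Hessian}; the non-degeneracy of $A$ plus the $\PT$-restriction handles the invertibility of the CME Jacobian, as you say, but it does not substitute for (A4) in the linear estimates near the $\kj$.
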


Before immersing ourselves in the details of the proof, let us make some important remarks.
\begin{remark}
The assumption $\kappa\neq 0$ is needed in the Helmholtz decomposition in Lemma \ref{L:helmh}.
\end{remark}
\begin{remark}
	Note that $\|u_\text{ans}\|_{L^2(\R^2)}=O(1) \ (\eps \to 0)$ since $\|A_j(\eps \cdot)\|_{L^2(\R^2)}=\eps^{-1}\|A_j\|_{L^2(\R^2)}$. Analogously one has $\|u_\text{ans}\|_{H^2(\R^2)}=O(1)$. The next correction term in the asymptotics of the solution $u$ is expected to have the form $\eps^2\sum_{j=1}^NA^{(2)}_j(\eps x)r_j(x)$ with suitable (smooth) functions $A^{(2)}_j$ and $r_j$. It is the correction compensating for the residual at the formal order $O(\eps^3)$ after satisfying the coupled mode equations, see Sec. \ref{Section_approx}. Hence, the expected correction term is $O(\eps)$ in $H^2(\R^2)$ such that the error estimate of Theorem \ref{Thm_main} is expected to be optimal. 
\end{remark}
\begin{remark}
	Theorem \ref{Thm_main} can also be considered as a result on the bifurcation of gap solitons from the zero solution at $\omega=\omega_*$. A sufficient condition is the existence of a $\PT$-symmetric and non-degenerate solution $A\in H^{s_A}(\R^2,\C^N)$ of the effective CME equations. 
\end{remark}

\begin{remark}
	The CMEs in \eqref{CMEs_phys} are a system of coupled nonlinear Schr\"odinger equations and have the same structure as those for stationary gap solitons of the 2D scalar Gross-Pi\-ta\-ev\-skii equation with a periodic potential, see \cite{DPS09,DU}. In \cite{DD} several localized solutions of CMEs \eqref{CMEs_phys} with coefficients determined by Bloch waves of the Maxwell system were found numerically. The current paper does not discuss the existence of localized nontrivial solutions to CMEs. Existence results based on bifurcation theory and variational analysis can be found, e.g., in \cite{LW05,Mandel-2014,Mandel-2015}. 
\end{remark}
\begin{remark}
The $\PT$-symmetry has been extensively studied by the physics community in the recent years, mainly with emphasis on localized solutions, as it serves as a model for a balance between gain and loss in the structure. It has been shown to have a lot of applications, e.g. in Bose-Einstein condensates \cite{KKZ}, non-Hermitian systems \cite{Bender}, quantum mechanics, optics \cite{RMEl}, or surface plasmons polaritons \cite{Oulton, Barton2018}. For a survey on the topic we refer to \cite{El}. Mathematically, the restriction of a fixed point argument to a $\PT$-symmetric (or more generally anti-linearly symmetric) subspace has been used to obtain real nonlinear eigenvalues, see, e.g., \cite{Rubinstein-2010-195,DS,DP,DR}. In particular, in our justification result, such symmetry, assumed on the functions $\epsilon$ and $\chitre$ and then reflected in the band structure, is exploited to remove shift and space invariances in perturbed CMEs. This enables us to invert the linearized operator when working in the symmetric subspace.
\end{remark}
\begin{remark}
	The proof of Theorem \ref{Thm_main} is based on a generalized Lyapunov-Schmidt decomposition in Bloch variables and on fixed point arguments. The CMEs can be seen as the effective bifurcation system of the Lyapunov-Schmidt decomposition. This approach has been used, e.g., for wave packets of the Gross-Pi\-ta\-ev\-skii equation with periodic coefficients in \cite{DPS09,DU,DU-err,DP}. 
 \end{remark}
\begin{remark}
The term ``Coupled Mode Equations'' in the context of asymptotics of wave packets is often used also for a different system, namely for time dependent first order envelope equations. These are derived when the wave packet is built using Bloch waves with nonzero group velocities, see \cite{GWH01,SU01,GMS08,DW20}.
 \end{remark}

\vskip0.2truecm
The rest of the paper is organized as follows. In Sec. \ref{Section:Linear}, after introducing the suitable functional setting, we investigate the linear problem $L^{(E)}u-\omega^2\epsilon u =0$, its spectrum and the Bloch waves. Then, using the Bloch transform, we formulate \eqref{eq} in the Bloch variables. In addition, important regularity estimates on the Bloch eigenfunctions are also established here. Next, precise formulations of our assumptions are given in Sec. \ref{Section_AssMR}. The proof of Theorem \ref{Thm_main} is provided in Sec. \ref{S:NL} and split into several subsections according to our Lyapunov-Schmidt decomposition of the solution. We trim the solution $u$ by rest terms, which are proved to be small enough in Sec. \ref{Section_w0}-\ref{Section_B}, and we finally show that the leading order part is $\varepsilon$-close to our ansatz in Sec. \ref{Section_approx}. The Appendix collects some auxiliary Lemmas which are used in our analysis.

\section{Function Spaces, Spectrum, Bloch Transformation and Linear Estimates}\label{Section:Linear}
In this section we firstly investigate the eigenvalue problem
\begin{equation}\label{eq_linear}
\nabla'\times\nabla'\times u=\omega^2\epsilon(x)u, \quad x\in \R^2
\end{equation}
and the corresponding Bloch eigenvalue problem on the periodicity cell
\begin{equation}\label{eval-eq-p}
\nabla_k'\times\nabla_k'\times p=\omega_j^2(k)\epsilon(x)p,\quad x\in \cQ,
\end{equation}
where $p$ is $\Lambda$-periodic, $k\in \R^2$ and (with $\kappa\in \R$ being a fixed parameter)
$$\nabla'_k:=(\pa_1+\ri k_1, \pa_2 +\ri k_2, \ri \kappa)^T.$$
Secondly, we prove estimates on a linear inhomogeneous problem on the periodicity cell. This problem is obtained by applying the Bloch transformation to an inhomogeneous version of \eqref{eq_linear}, which plays a central role in a Banach fixed point iteration for the nonlinear equation in Sec. \ref{S:NL}.

The Bloch transformation and its properties are reviewed subsequently.

\subsection{Function Spaces}

We start by defining some function spaces which we use below. Because of the presence of the \textit{curl} operator in the Maxwell system \eqref{Max} we will make use of $H(\mbox{curl})$ spaces with the curl defined using the above gradient $\nabla'$. Let us first define 
$$
\begin{aligned}
L_\#^2(\cQ,\C^3)&:=\{v\in L^2_\text{loc}(\R^2,\C^3): v\ \text{ is $\Lambda$-periodic}\},\\
H_\#^s(\cQ,\C^3)&:=\{v\in H^s_\text{loc}(\R^2,\C^3): v\ \text{ is $\Lambda$-periodic}\}, \ s>0.
\end{aligned}
$$ 
The notation $L^2_\#(\cQ,\C^3), H_\#^s(\cQ,\C^3)$ is chosen to make clear that the elements need to be defined on the periodicity cell $\cQ$ and periodically extendable in an $L^2_\text{loc}$ resp. $H^s_\text{loc}$ fashion onto $\R^2$. Note that for a vector field $u:\cQ\to\C^3$ we define
$$\|u\|^2_{H^s(\cQ)}:=\sum_{j=1}^3\sum_{|\alpha|\leq s}\|D^\alpha u_j\|^2_{L^2(\cQ)}$$
with $\alpha\in\N_0^3$ being the standard multi-index and $s\in\N_0$.

Next we define
$$H_\#(\text{curl},\cQ):=\{v\in L_\#^2(\cQ,\C^3)\,|\,\nabla'\times v\in L^2_\#(\cQ,\C^3)\}$$
and
$$H_\#(\text{curl}^2,\cQ):=\{v\in H_\#(\text{curl},\cQ)\,|\,\nabla'\times\nabla'\times v\in L_\#^2(\cQ,\C^3)\}.$$
We will sometimes use the short notation $H_\#(\text{curl})$ or $H_\#(\text{curl}^2)$.

Note that in the majority of our calculations the gradient $\nabla'$ is replaced by $\nabla_k'$. However, this makes no difference in the definition of the function spaces. Indeed,  because
$$H_\#(\text{curl},\cQ) = \{v\in L_\#^2(\cQ,\C^3)\,|\,\nabla'_k\times v\in L_\#^2(\cQ,\C^3)\}$$
and
$$H_\#(\text{curl}^2,\cQ) = \{v\in H_\#(\text{curl},\cQ)\,|\,\nabla'_k\times\nabla'_k\times v\in L_\#^2(\cQ,\C^3)\}$$
for any $k \in\R^2$, we do not need to define new function spaces for problems involving the gradient $\nabla'_k$.

For later use, we note the identity
\begin{equation*}\label{divk_curlk}
\nabla'_k\cdot\nabla'_k\times u=0\qquad\mbox{for all}\,\,u\in H_\#(\text{curl},\cQ),
\end{equation*}
which can be easily checked.

\subsection{Spectral Problem for the \texorpdfstring{$\boldsymbol{H}-$}{H-}field}\label{S:spec-H}
We build our linear theory on the results of \cite{Dauge_et_al} for the spectral problem for the $H$-field
\begin{equation*}\label{eq_linear_H}
\nabla'\times\left(\frac1\epsilon\nabla'\times v\right)=\omega^2 v.
\end{equation*}
It follows from the Bloch theory (see \cite{kuchment93,DLPSW_2011}) that the spectrum of  
$$L^{(H)}: H({\rm curl}_\epsilon^2)\to L^2(\R^2,\C^3), \quad\; L^{(H)}:=\nabla'\times\left(\frac{1}{\epsilon}\nabla'\times \cdot \right),$$
where
$$H({\rm curl}_\epsilon^2):=\bigg\{v\in L^2(\R^2,\C^3)\,|\,\nabla'\times v\in L^2(\R^2,\C^3), \nabla'\times\bigg(\frac1\epsilon\nabla'\times v\bigg)\in L^2(\R^2,\C^3)\bigg\},$$
is obtained as the union (over all $k\in \B$) of the spectra of
$$L_k^{(H)}: H_\#({\rm curl}_\epsilon^2)\to L_\#^2(\cQ,\C^3), \quad\; L^{(H)}_k:=\nabla_k'\times\left(\frac1\epsilon\nabla_k'\times \cdot \right),$$
where 
$$H_\#({\rm curl}_\epsilon^2):=\bigg\{v\in L^2_\#(\cQ,\C^3)\,|\,\nabla_k'\times v\in L^2_\#(\cQ,\C^3), \nabla_k'\times\bigg(\frac1\epsilon\nabla_k'\times v\bigg)\in L^2_\#(\cQ,\C^3)\bigg\}.$$
We emphasize that $L^{(H)}_k$ acts on periodic functions on the periodicity cell $\cQ$.

Let $k\in \B$ be fixed. With the form domain of $L_k^{(H)}$ being
\begin{equation}\label{DomainVk}
V_k:=\{v\in L^2_\#(\cQ,\C^3)\,|\,\nabla_k'\times v\in L^2_\#(\cQ,\C^3), \ \nabla_k'\cdot v =0\},
\end{equation}
the authors of \cite{Dauge_et_al} prove that the spectrum is discrete and satisfies
$$\sigma(L_k^{(H)})=\{\omega_1^2(k),\omega_2^2(k),\dots\}\subset [0,\infty),$$
where
$$\omega_1^2(k)\leq \omega_2^2(k)\leq \dots$$
The corresponding eigenfunctions $(q_j(\cdot,k))_{j\in\N}\subset V_k$ satisfy
\beq\label{E:ev-eq-weak-vj}
a_k(q_j(\cdot,k),\varphi)=\omega_j^2(k)\langle q_j(\cdot,k),\varphi\rangle \qquad \forall \varphi \in V_k,
\eeq
where 
$$a_k(\psi,\varphi):=\int_\cQ \frac1\epsilon\nabla_k'\times\psi\cdot\overline{\nabla_k'\times\varphi}\dd x \quad \text{and} \quad \langle\psi,\varphi\rangle:=(\psi,\varphi)_{L^2(\cQ)}:=\int_\cQ\psi\cdot\overline\varphi\dd x.$$ 
Moreover, they can be chosen $L^2(\cQ)$-orthonormal, i.e.
$$\langle q_i(\cdot,k),q_j(\cdot,k)\rangle =\delta_{ij} \quad \forall i,j\in \N.$$

It follows that for $\lambda$ in the resolvent set, i.e. $\lambda \in \C\setminus \{\omega_1^2(k),\omega_2^2(k),\dots\}$, and $g\in L^2_\#(\cQ,\C^3)$ there is a unique $v\in V_k$ such that
\beq\label{E:veq-inhom-weak}
a_k(v,\varphi)-\lambda\langle v,\varphi\rangle=\langle g,\varphi\rangle \quad\;\; \forall \varphi\in V_k.
\eeq
Moreover, there is a constant $c>0$ such that 
\beq\label{E:v-est-weak}
\|v\|_{H^1(\cQ)}\leq c\,\|g\|_{L^2(\cQ)}.
\eeq
Here the equivalence of the $H({\rm curl})$ and $H^1$-norms on $V_k$ has been used. In fact, the eigenfunctions automatically satisfy the regularity
\beq\label{E:vj-reg}
q_j(\cdot,k)\in H_\#({\rm curl}_\epsilon^2)\cap V_k.
\eeq
To show this, it suffices to prove that \eqref{E:ev-eq-weak-vj} holds for all $\varphi \in C^\infty_c(\cQ)$. Then the weak curl $\nabla_k'\times$ of $\frac{1}{\epsilon}\nabla_k'\times q_j(\cdot,k)$ equals $\omega_j^2(k)q_j(\cdot,k)$, which is in $L_\#^2(\cQ,\C^3)$. Due to the Helmholtz decomposition in Lemma \ref{L:helmh} we have
$$H_\#({\rm curl},\cQ)=V_k \oplus \nabla'_k H_\#^1(\cQ),$$
so, by a density argument, it remains to show that \eqref{E:ev-eq-weak-vj} holds for all $\varphi \in \nabla'_k C^\infty_\#(\overline\cQ)$, where as usual the subscript $\#$ denotes periodicity. Substituting $\varphi = \nabla_k'\psi$ with $\psi\in C^\infty_\#(\overline\cQ)$, we clearly have $\nabla_k'\times \nabla_k'\psi =0$, as well as 
\begin{equation*}
\int_\cQ q_j(\cdot,k)\cdot\overline{\nabla_k'\psi}\dd x=\int_{\partial\cQ}q_j(\cdot,k)\overline\psi\cdot\nu-\int_\cQ\nabla_k'\cdot q_j(\cdot,k)\overline\psi=0,
\end{equation*}
where $\nu:=(\nu_1,\nu_2,0)^\trans$ and $(\nu_1,\nu_2)^\trans$ being a.e. defined as the unit outer normal vector of $\partial\cQ$. Indeed, first $\nabla_k'\cdot q_j(\cdot,k)=0$ because $q_j(\cdot,k)\in V_k$. Second, as $V_k\subset H^1_{loc}(\R^2)$, the boundary term is well-defined and, by periodicity of both $q_j(\cdot,k)$ and $\psi$, the contributions of the boundary integral on opposite sides of the periodicity cell $\cQ$ cancel out.

Due to the regularity in \eqref{E:vj-reg} we conclude
\beq\label{E:vj-eq-L2}
\nabla_k'\times\bigg(\frac1\epsilon\nabla_k'\times q_j(\cdot,k)\bigg) = \omega_j^2(k) q_j(\cdot,k) \quad\;\; \text{in }\;\, L_\#^2(\cQ,\C^3) \;\,\text{for each }j\in\N.
\eeq

For the spectrum of the operator $L^{(H)}$ in $L^2(\R^2)$ one has 
$$\sigma(L^{(H)})=\bigcup_{k\in \B}\sigma(L^{(H)}_k)=\bigcup_{k\in \B,\,n\in \N}\omega_n(k),$$
see \cite{DLPSW_2011}.

\subsection{Spectral Problem for the \texorpdfstring{$\boldsymbol{E}-$}{E-}field}\label{S:spec-E}
Let $k\in \B$ be fixed. As we show now, for each eigenfunction $q_j$ of \eqref{E:ev-eq-weak-vj} the function (for $\omega_j(k)\neq 0$)
\begin{equation}\label{pj}
p_j(x,k):=\frac\ri{\epsilon(x)\omega_j(k)}\nabla_k'\times q_j(x,k)
\end{equation}
is an $H_\#({\rm curl}^2)$ eigenfunction of the eigenvalue problem for the $E-$field.
Due to \eqref{E:vj-reg} we first have $p_j(\cdot,k)\in H_\#({\rm curl})$. Next, \eqref{E:vj-eq-L2} implies $\nabla_k'\times p_j(\cdot,k)=\ri\omega_j(k)q_j(\cdot,k)\in H_\#({\rm curl})$ such that $p_j(\cdot,k)\in H_\#({\rm curl}^2)$ and
\begin{equation}\label{E:ev-eq-weak-uj}
L^{(E)}_k p_j(\cdot,k):=\nabla_k'\times\nabla_k'\times p_j(\cdot,k)=\epsilon\omega_j^2(k)p_j(\cdot,k)\quad\; \text{in } L^2_\#(\cQ).
\end{equation}

The sequence $(p_j(\cdot,k))_j$ satisfies the orthogonality 
\beq\label{orthog_pn}
\langle p_i(\cdot,k),p_j(\cdot,k)\rangle_\epsilon:=\langle p_i(\cdot,k),\epsilon p_j(\cdot,k)\rangle=\delta_{ij} \quad \forall i,j\in\N
\eeq
because
\begin{equation*}
\begin{split}
\langle p_i(\cdot,k),p_j(\cdot,k)\rangle_\epsilon&=\frac1{\omega_i(k)\omega_j(k)}\int_{\cQ}\frac1\epsilon\nabla_k'\times q_i(\cdot,k)\cdot\overline{\nabla_k'\times q_j(\cdot,k)}\dd x\\
&=\frac1{\omega_i(k)\omega_j(k)}a_k(q_i(\cdot,k),q_j(\cdot,k))=\delta_{ij}.
\end{split}
\end{equation*}

Besides the periodicity in $x$, the functions $p_n, n\in \N$ are quasiperiodic in $k$, namely
\begin{equation*}
p_n(x,k+K)=p_n(x,k)e^{-\ri K\cdot x}\qquad\mbox{for all}\,\,x\in\R^2\,\,\mbox{and}\,\,K\in\Lambda^*.
\end{equation*}

Two symmetries of the eigenfunctions $p_n$ will be used in the analysis. Firstly, because the eigenvalue problem is invariant under the complex conjugation combined with replacing $k$ by $-k$, one sees that $\omega_n(k)=\omega_n(-k)$ for all $n\in\N$, being they real. Similarly, one also deduces that $\overline{p_n(x,k)}$ is an eigenfunction of $L^{(E)}_{-k}$ if and only if $p_n(x,k)$ is an eigenfunction of $L^{(E)}_{k}$. This implies that the eigenfunction $p_n(x,-k)$ can be chosen to agree with $\overline{p_n(x,k)}$ for all $k\in\R^2\setminus\{0\}$. Notice that at $k=0$ the operator $L^{(E)}_0$ is real, so a real eigenfunction can always be chosen. Hence we have
\beq\label{E:sym-minus-k}
p_n(x,-k)=\overline{p_n(x,k)} \qquad \mbox{for all}\,\,x,k\in\R^2, n\in \N.
\eeq
Secondly, if $\epsilon(x)=\epsilon(-x)$ and if $p_n(x,k)$ is an eigenfunction of $L^{(E)}_k$, it is easy to show that $\overline{p_n(-x,k)}$ is an eigenfunction of $L^{(E)}_k$, too. Therefore, if $\omega_n(k)^2$ is a geometrically simple eigenvalue of \eqref{E:ev-eq-weak-vj}, there is always a choice of the phase of the normalized eigenfunction $p_n$ such that the $\PT-$symmetry
\beq\label{E:sym-minus-x}
p_n(-x,k)=\overline{p_n(x,k)} \qquad \mbox{for all}\,\,x\in\R^2
\eeq
holds.
\vskip0.2truecm

The map $k\mapsto\omega_n(k)$ with $\omega_n\geq 0$ is called the $n$\textit{-th eigenvalue} and the map $(k,n)\mapsto\omega_n(k)$ the \textit{band structure}. Clearly, since the spectrum (for each $k$) is given by $\{\omega_1(k)^2, \omega_2(k)^2,\dots\}$, there are also the negative eigenvalues $\omega_{-n}:=-\omega_n$, but they play no role in our analysis. Notice also that the band structure is the same for both operators $L^{(H)}$ and $L^{(E)}$.

\subsection{Inhomogeneous Linear Equation for the \texorpdfstring{$\boldsymbol{E}-$}{E-}field}

Our asymptotic and nonlinear analysis is performed for the $E-$field and in the fixed point argument we need to solve the inhomogeneous problem
\beq\label{E:u-inhom-f}
L^{(E)}_k u -\omega^2 \epsilon u = f
\eeq
with $\omega^2$ in the resolvent set of $\epsilon^{-1}L^{(E)}_k$, i.e. of $L^{(H)}_k$.
In our application we have $f\in H^2_\#(\cQ)$ and the fixed point argument requires the estimate $\|u\|_{H^2(\cQ)}\leq c\|f\|_{H^2(\cQ)}.$ We prove this estimate next. 
\blem Let $k\in \B$, $\epsilon \in W^{2,\infty}(\cQ)$, $\epsilon^{-1}\in L^\infty(\cQ)$, $\omega^2 \in \C\setminus \{\omega_1^2(k),\omega_2^2(k),\dots\}$, and $f\in H^2_\#(\cQ)$. Then \eqref{E:u-inhom-f} has a unique solution $u\in H_\#({\rm curl}^2)$ such that 
\beq\label{E:u-H2est}
\|u\|_{H^2(\cQ)} \leq c \|f\|_{H^2(\cQ)}
\eeq
holds.
\elem

\begin{remark}
	Note that \eqref{E:u-H2est} is clearly not optimal as an estimate of the solution of \eqref{E:u-inhom-f}. An optimal estimate includes just the $L^2$-norm on the right-hand side. However, as our nonlinear analysis below employs an estimate of the form $\|u\|_{H^2}\leq c\|f\|_{H^2}$, this suboptimality is not of essence.
\end{remark}

\bpf
For $\omega^2\in\C\setminus\{\omega_1^2(k),\omega_2^2(k),\dots\}$ we choose $\omega$ such that e.g. $\arg\omega\in\big(-\frac\pi2,\frac\pi2\big]$. We define first
$$\tilde f:=-\frac\ri\omega\nabla'_k\times \frac{f}{\epsilon}$$
and solve $L^{(H)}_k v -\omega^2 v = \tilde f$ in the weak sense, see \eqref{E:veq-inhom-weak}. Due to \eqref{E:v-est-weak} we get $\|v\|_{H^1(\cQ)}\leq c\|\tilde f\|_{L^2(\cQ)}$. Since $\epsilon \in W^{1,\infty}(\cQ)$ and $\epsilon^{-1}\in L^\infty(\cQ)$, we get
\beq\label{v:est-ftil}
\|v\|_{H^1(\cQ)}\leq c\,\|f\|_{H({\rm curl})}.
\eeq
Moreover, similarly to \eqref{E:vj-reg}, using the Helmholtz decomposition of Lemma \ref{L:helmh}, we get $v\in H_\#({\rm curl}_\epsilon^2)$ and
\beq\label{E:Lv-inhom}
L^{(H)}_k v -\omega^2 v = \tilde f \quad \text{in }L_\#^2(\cQ,\C^3).
\eeq

Next, we set 
\beq\label{E:u-from-v}
u:=\frac{\ri}{\omega \epsilon}\nabla_k'\times v -\frac{1}{\omega^2\epsilon}f.
\eeq
Then $\nabla_k'\times u = \ri \omega v$ and using \eqref{v:est-ftil} as well as the assumptions on $\epsilon$, we have
\beq\label{E:u-Hcurl-est}
\|u\|_{H({\rm curl})}\leq c \|f\|_{H({\rm curl})}.
\eeq

Moreover, since $v\in H_\#({\rm curl})$, we have from $\nabla_k'\times u = \ri \omega v$ also $u\in H_\#({\rm curl}^2)$ and thus, applying $\nabla'_k\times$ to \eqref{E:u-from-v}, we obtain that \eqref{E:u-inhom-f} holds as an equation in $L^2_\#(\cQ)$.

Next, we derive the desired $H^2$-estimate on $u$. We start with $H^1$. Because 
\begin{equation}\label{2.19bis}
\|u\|_{H^1(\cQ)}\leq c\left(\|u\|_{H({\rm curl})}+\|\nabla_k'\cdot u\|_{L^2(\cQ)}\right)
\end{equation}
and because of \eqref{E:u-Hcurl-est} it remains to estimate the divergence. Since $\nabla_k'\cdot u = \frac1\epsilon(\nabla_k'\cdot( \epsilon u) -(\nabla_k' \epsilon) \cdot u)$, from \eqref{E:u-from-v} we infer
\beq\label{E:divk-u}
\nabla_k'\cdot u  = -\frac1\epsilon\left(\frac1\omega\nabla_k'\cdot f-(\nabla_k' \epsilon)\cdot u\right).
\eeq
By $\epsilon\in W^{1,\infty}$ and $\epsilon^{-1}\in L^\infty$, we get then
\begin{equation}\label{2.20bis}
\|\nabla_k'\cdot u\|_{L^2(\cQ)}\leq c\,(\|f\|_{H^1(\cQ)}+\|u\|_{L^2(\cQ)})\leq c\,\|f\|_{H^1(\cQ)},
\end{equation}
where the last inequality holds by \eqref{E:u-Hcurl-est}. Next,
$$
\|u\|_{H^2(\cQ)} \leq c \left(\|u\|_{L^2(\cQ)}+\|\nabla_k'\times u\|_{H^1(\cQ)}+\|\nabla_k'\cdot u\|_{H^1(\cQ)}\right)
$$
and, using  again \eqref{E:u-from-v}, \eqref{E:divk-u}, and $\epsilon\in W^{2,\infty}$ we obtain
$$
\|u\|_{H^2(\cQ)} \leq c \bigg(\|u\|_{H^1(\cQ)}+\bigg\|\nabla_k'\times \bigg(\frac1\epsilon\nabla_k'\times v\bigg)\bigg\|_{H^1(\cQ)}+\|\nabla_k'\times f\|_{H^1(\cQ)}+\|\nabla_k'\cdot f\|_{H^1(\cQ)}\bigg)
$$
The estimate \eqref{E:u-H2est} is finally deduced by \eqref{v:est-ftil},\eqref{E:Lv-inhom} and \eqref{E:u-Hcurl-est},\eqref{2.19bis}, \eqref{2.20bis}. 
\epf

\subsection{Bloch Transformation}

To take advantage of the fact that the coefficients of our problem \eqref{eq} are periodic, we will work in Bloch variables, i.e. we will employ the Bloch transform to change the problem into a family of problems on the periodicity cell $\cQ$, parametrized by the wave vector $k \in\B$. The above discussion (Sec. \ref{S:spec-H}, \ref{S:spec-E}) implies that the resulting equation has (for each $k$) a linear operator with a discrete spectrum.

The \textit{Bloch transform} $\cT:L^2(\R^2)\to L^2(\B, L^2_\#(\cQ))$ so that $v\mapsto\tv$ and its inverse are formally defined as
\begin{equation*}
\tv(x,k)=(\cT v)(x,k):=\sum_{K\in\Lambda^*}\hv(k+K)e^{\ri K\cdot x},\qquad v(x)=(\cT^{-1}\tv)(x)=\int_\B\tv(x,k)e^{\ri k\cdot x}\dd k
\end{equation*}
for all $x,k\in\R^2$, see e.g. \cite{Schneider98} or \cite[Chap.7]{Optic_book}. For the domain and range of $\cT$ see \eqref{Bloch_isomorphismum}. Here $\hv$ denotes the \textit{Fourier transform} of $v\in L^1(\R^2)$
\begin{equation*}
\hv(k):=\frac1{(2\pi)^2}\int_{\R^2}v(x)e^{-\ri k\cdot x}\dd x,
\end{equation*}
which is extended to $L^2(\R^2)$ functions as usual.

The definition of $\tilde{v}$ yields naturally the periodicity in $x$ and the quasi-periodicity in $k$, i.e.
\begin{equation}\label{properties_period-qperiod}
\begin{split}
\tv(x+R,k)&=\tv(x,k)\qquad\qquad\quad\mbox{for all}\,\,R\in\Lambda, x\in \R^2, k\in \R^2,\\
\tv(x,k+K)&=e^{-\ri K\cdot x}\tv(x,k)\qquad\mbox{for all}\,\,K\in\Lambda^*, x\in \R^2, k\in \R^2.
\end{split}
\end{equation}
Moreover, the product of two functions $f,g\in L^2(\R^2)$ for which also $fg\in L^2(\R^2)$ is transformed by $\cT$ into a convolution of the transformed functions:
\begin{equation}\label{convolution}
(\cT(fg))(x,k)=\int_\B \tf(x,k-l)\tilg(x,l)\dd l=:(\tf\ast_\B\tilg)(x,k),
\end{equation}
where the quasiperiodicity property in \eqref{properties_period-qperiod} is used if $k-l\not\in\B$. For the same reason the convolution in $\B$ can be substituted by a convolution on any shifted Brillouin zone, i.e.
$$(\tf\ast_\B\tilg)(x,k)=\int_{\B+k_*}\tilde f(x,k-l)\tilde g(x,l)\dd l \qquad \forall k_*\in \R^2.$$ 
\noindent If $f$ enjoys periodicity with respect to the same lattice $\Lambda$, then
\begin{equation}\label{Bloch_periodic}
(\cT(fg))(x,k)=f(x)(\cT g)(x,k)
\end{equation}
for all $x\in\R^2$ and $k\in\B$.

\paragraph{The function spaces for the Bloch transform.}

Let $H^s(\R^n)$ with $s>0$ be the standard (possibly fractional) Sobolev space. The Bloch transform  
\begin{equation}\label{Bloch_isomorphismum}
\cT:H^s(\R^n,\C)\to \cX_s:=L^2(\B,H^s_\#(\cQ,\C))
\end{equation}
is an isomorphism for $s\geq 0$ \cite{Schneider98,ReedSimon}. The norm in $\cX_s$ is defined as
\begin{equation*}
\|\tu\|_{\cX_s}=\left(\int_\B\|\tu(\cdot,k)\|^2_{H^s(\cQ)}\dd k\right)^{\frac12},
\end{equation*}
where $\cQ$ is an arbitrary interval in $\R^n$ and $\B$ the corresponding reciprocal periodicity cell. We work, of course, in $n=2$ with $\cQ$ and $\B$ as defined in Sec. \ref{S:intro}. For vector-valued functions $u\in H^s(\R^n,\C^m)$, $m\in\N$ the transform $\cT$ is defined componentwise and the space $\cX_s$ is $\cX_s:=L^2(\B,H^s_\#(\cQ,\C^m))$ with the norm $\|f\|_{\cX_s}:=\max_{1\leq j\leq m}\|f_j\|_{\cX_s}$.

Note that due to the quasi-periodicity of $\tilde u$ in $k$, the $\cX_s$-norm is equivalent to 
$$\left(\int_{\B+k_*}\|\tu(\cdot,k)\|^2_{H^s(\cQ)}\right)^{\frac12}$$
for any $k_*\in \R^2.$ We take advantage of this property in our estimates below.

Because of the polynomial nonlinearity in \eqref{eq} and our approach employing a fixed point argument we require our function space to have the algebra property with respect to the pointwise multiplication. We recall that if $s>n/2$, the Sobolev space $H^s(\R^n)$ enjoys this property. Moreover, it embeds into the space of bounded and continuous functions decaying to $0$ at $\infty$. 

In the Bloch variables, where multiplication is transformed into a convolution, we need the algebra property with respect to the convolution. Combining the algebra property of $H^s(\R^n), s>n/2,$ and \eqref{convolution}, we get the following algebra property for our working space $\cX_s$:
\begin{equation}\label{algebra:X_2}
\|\tf\ast_\B\tilg\|_{\cX_s}\leq c\,\|\tf\|_{\cX_s}\|\tilg\|_{\cX_s}\qquad\mbox{for any}\,\,\tf,\tilg\in\cX_s\quad\mbox{if}\,\,s>n/2.
\end{equation}
We introduce also the weighted spaces $L^2_s(\R^n)$ defined as
\begin{equation}\label{Ls}
L^2_s(\R^n):=\big\{f\in L^2(\R^n)\,|\,\|f\|_{L^2_s(\R^n)}^2:=\scaleobj{.75}{\int_{\R^n}}(1+|x|)^{2s}|f(x)|^2\dd x<\infty\big\}.
\end{equation}
Recall that the Fourier transform is an isomorphism from $H^s(\R^n)$ to $L^2_s(\R^n)$ for $s\geq 0$.


\section{Assumptions}\label{Section_AssMR}
We start with the following basic assumptions on the coefficients and on the band structure.
\begin{enumerate}
\item[(A1)] $\epsilon:\R^2\to\R$ and $\chi^{(3)}:\R^2\to\R^{3\times3\times3\times3}$ are $\Lambda$-periodic and real-valued and $\epsilon>0$;
\item[(A2)] the spectrum $\cup_{n\in \N,k\in \B}\{\omega_n(k)\}\subset \R$ possesses a gap;
\item[(A3)] the points $k^{(1)}, \dots, k^{(N)}\in\B$ are distinct and constitute the level set $W_{\omega_*}\subset \B$ of one of the gap edges, denoted by $\omega_*$ and the eigenvalues at the level $\omega_*$ are all geometrically simple, i.e.
\begin{equation*}
\omega_*=\omega_n(k),\; n \in \N,\; k \in \B\quad\Rightarrow\quad k\in\{k^{(1)}, \dots, k^{(N)}\},
\end{equation*}
and
\begin{equation*}
\dim\ker\big(L^{(E)}_{\kj}-\epsilon\omega^2_*I\big)=1.
\end{equation*}
Hence, due to the monotonicity $\omega_n(k) \leq \omega_{n+1}(k)$, we have 
$$\exists n_*\in \N: \ \omega_{n_*}(k^{(j)})=\omega_* \ \forall j\in\{1,\dots,N\}.$$
\item[(A4)] the eigenvalue $\omega_{n_*}$ is twice continuously differentiable at $\kj$ and $\nabla^2\omega_{n_*}(k^{(j)})$, the Hessian of $\omega_{n_*}$ at $k=\kj$, is definite for each $j\in\{1,\dots,N\}$.
\end{enumerate}
The formal asymptotic analysis of gap solitons in \cite{DD}
used assumptions (A1),(A2), and (A4). In assumption (A3) multiple eigenvalues were allowed at the points $k^{(j)},j=1,\dots,N$. Here, in order to be able to prove symmetries of the Bloch waves at $k^{(j)},j=1,\dots,N$, which are needed in the restriction of the nonlinear problem to a symmetric subspace, we require the geometric simpleness. The unique (up to a phase factor $e^{\ri\alpha}$) normalized eigenfunction at $k=k^{(j)}$ and $\omega =\omega_*$ is denoted by $p_{n_*}(\cdot,k^{(j)})$.

Note that according to the mathematical folklore, simple eigenvalues depend smoothly on the coefficients of the operator. Nevertheless, we are not aware of an existing result applicable to our operator $L_k^{(H)}$ or $L_k^{(E)}$ such that we assume the $C^2$-regularity in (A4). In addition our proof requires the Lipschitz continuity throughout $\B$, which we prove in the Appendix, see Lemma \ref{eigv_Lip}.

Clearly, $\omega_*$ must be the maximum or minimum of the eigenvalue $\omega_{n_*}$. Hence, based on (A4), $\nabla^2\omega_{n_*}(k^{(j)})$ is either positive definite for all $j$ or negative definite for all $j$.
Note that the assumption that $\omega_*$ is attained at  the points $k=\kj$, $j=1,\dots,N$ by the same eigenvalue $\omega_{n_*}$ is in accordance with the numbering of the eigenvalues $\omega_n(k)$ at each $k$ according to the magnitude.

\begin{remark}
	Assumption (A3) seems relatively restrictive as it does not allow for $\{k^{(1)}, \dots, k^{(N)}\}$ to be a proper subset of the level set $W_{\omega_*}$. We need this assumption to estimate the correction term, which is supported (in the wave-number $k$) away from small neighbourhoods of the points $\{k^{(1)}, \dots, k^{(N)}\}$, i.e. away from the support of the main contribution of the solution. The support of the correction term must not intersect $W_{\omega_*}$ because otherwise $(\omega_{n_*}(k)-\omega_*)^{-1}$ blows up on this support. Note that this can be contrasted against the case of the bifurcation of nonlinear Bloch waves in \cite{DU16}, where $\{k^{(1)}, \dots, k^{(N)}\}$ can be a proper subset of $W_{\omega_*}$ provided the $k-$points generated by (iterations of) the nonlinearity, i.e. the points 
	$$k\in S_3^n(\{k^{(1)}, \dots, k^{(N)}\}) \text{ for some }n\in \N,$$ 
	where 
	$$S_3(\{k^{(1)}, \dots, k^{(N)}\}):=\{k^{(\alpha)}+k^{(\beta)}-k^{(\gamma)}: \alpha,\beta,\gamma \in\{1,\dots,N\}\},$$ 
	lie outside the level set. Unlike in \cite{DU16} the $k-$support of the leading order term of the gap solitons contains whole neighbourhoods of the points $\{k^{(1)}, \dots, k^{(N)}\}$ (and not isolated points) such that iterations of $S_3$ applied to the union of these neighbourhoods generate all $k\in \B$. 
	
	It is however possible for some components of the CME-solutions to be zero, i.e. $A_{m_1}= \dots =A_{m_M}=0$ for some $1\leq m_1,\dots,m_M\leq N$ (with $M<N)$. This can happen only if the CMES are consistent with the reduction to the components $A_k, k\in \{1,\dots,N\}\setminus \{m_1,\dots,m_M\}$ or equivalently if 
	$$S_3(\{k^{(m_1)},\dots, k^{(m_M)}\})\cap \{k^{(1)}, \dots, k^{(N)}\} = \{k^{(m_1)},\dots, k^{(m_M)}\}.$$ 
	In that sense assumption (A3) is effectively the same as assuming that $\{k^{(1)}, \dots, k^{(N)}\}$ is a consistent subset (in the above sense) of the level set $W_{\omega_*}$ and that $W_{\omega_*}$ is finite.
\end{remark}

To rigorously justify the formal approximation via \eqref{ansatz_phys_var}, we need to assume the following additional conditions: 
\begin{enumerate}
	\item[(A5)] the material functions $\epsilon$ and $\chi^{(3)}$ satisfy $\epsilon\in W^{2,\infty}(\R^2)$, $\epsilon^{-1}\in L^\infty(\R^2)$, $\chi^{(3)}\in H^2_\text{loc}(\R^2)$;
	\item[(A6)] symmetry of the material: $\epsilon(x)=\epsilon(-x)$, $\chi^{(3)}(x)=\chi^{(3)}(-x)$ for all $x\in \R^2$;
\item[(A7)] the eigenvalue $\omega_{n_*}(k)$ is geometrically simple for almost all (w.r.t. the Lebesgue measure) $k\in \B$;
\end{enumerate}
Note that assumption (A7) allows for the touching of eigenvalue graphs $(k,\omega_{n_*}(k))$ and $(k,\omega_{m}(k))$ with $m\neq n_*$ as long as they touch along a curve; which is the canonical situation. This curve may include the points $k^{(1)}, \dots, k^{(N)}$, see assumption (A3).

Under the above assumptions Theorem \ref{Thm_main} justifies the use of the effective amplitude equations \eqref{CMEs_phys} to determine the envelopes $A_j$ in the ansatz \eqref{ansatz_phys_var} and constitutes the main result of the paper. It uses the following definition.
\bdefi\label{D:nondegen}
A solution $A_*\in L^2(\R^2)^N$ of \eqref{CMEs_phys}, denoted by $\mathcal{G}(A)=0$, is called \textit{non-degenerate} if the kernel of the Jacobian of $\mathcal{G}$ evaluated at $A_*$ is only three dimensional as generated by the two spatial shift invariances and the complex phase invariance of the CMEs, i.e.
$$\mbox{ker}\begin{pmatrix} \pa_{A_R}\mathcal{G}_R(A_*) & \pa_{A_I}\mathcal{G}_R(A_*)\\
\pa_{A_R}\mathcal{G}_I(A_*) & \pa_{A_I}\mathcal{G}_I(A_*)
\end{pmatrix} = \mbox{span}\left\{\pa_{y_1} \begin{pmatrix}A_{*,R}\\A_{*,I}\end{pmatrix}, \pa_{y_2} \begin{pmatrix}A_{*,R}\\ A_{*,I}\end{pmatrix}, \begin{pmatrix}-A_{*,I}\\ A_{*,R}\end{pmatrix}\right\},$$
where $A_{*,R}:=\mbox{Re}(A_*), A_{*,I}:=\mbox{Im}(A_*)$ and analogously for the other variables and functions.
\edefi

Assumptions (A6), (A7) are used to remove invariances (and thus eliminate non-trivial elements of the kernel) in a perturbed CME-problem by restricting to a symmetric subspace. This perturbed system is obtained in the justification analysis. The symmetric subspace is defined by the $\PT$-symmetry, i.e.
\begin{equation*}\label{PTsymm}
A(x)=\overline{A(-x)}\qquad\mbox{for all}\,\,x\in\R^2.
\end{equation*}
In this subspace the CMEs no longer possess the invariances wrt. the spatial shift and the complex phase. Hence, under the non-degeneracy condition, the linearized operator of the perturbed CME system is invertible. Note that other symmetric subspaces can be used to eliminate the kernel, see \cite{DU}.

Moreover, the evenness of $\epsilon$ and $\chi^{(3)}$ implies that the coefficients $I_{\alpha,\beta,\gamma}^{\,j}$ are real as explained at the end of Sec. \ref{S:perturbedCME}. 

\section{Proof of Theorem \ref{Thm_main}} \label{S:NL}
From now on, the bifurcation parameter $\omega$ is chosen to lie in the spectral gap in a $\cO(\varepsilon^2)$-vicinity of the edge $\omega_*$, i.e. 
\begin{equation}\label{omega}
\omega=\omega_*+\varepsilon^2\Omega,
\end{equation}
where $\Omega=\pm1$, the sign being determined by the condition that $\omega$ shall lie in the gap. Hence, $\Omega=\pm 1$ if $\omega_*$ is the bottom/top edge of a spectral gap, respectively. 

\subsection{Lyapunov-Schmidt decomposition}
We study the problem in Bloch variables in the space $\cX_2$: applying the Bloch transform $\cT$ to \eqref{eq}, we get
\begin{equation}\label{eq_Bloch}
L_k\tu(x,k):=\nabla'_k\times\nabla'_k\times\tu(x,k)-\omega^2\epsilon(x)\tu(x,k)=\omega^2\tF(\tu)(x,k),
\end{equation}
where
\begin{equation}\label{F_Bloch}
\tF_d(\tu)=\sum_{a,b,c=1}^3\underline{\chi}^{(3)}_{a,b,c,d}\tu_a\astB\tu_b \astB \tbu_c.
\end{equation}
Here properties \eqref{convolution} and \eqref{Bloch_periodic} have been used. Recall that $\omega$ is fixed as in \eqref{omega}. Note that in \eqref{F_Bloch} the double convolution equals $(\tu_a\astB\tu_b)\astB\tbu_c=\tu_a\astB(\tu_b\astB\tbu_c)$.

Note also that below $f\astB g$ is understood componentwise for scalar $f$ and vector valued $g$.

\vskip0.2truecm

Theorem \ref{Thm_main} claims that the solution can be approximated by a modulated sum of the Bloch eigenfunction $p_{n_*}(\cdot,k)$ at the chosen points $k^{(1)}, \dots, k^{(N)}$. Therefore, we decompose $\tu$ into the part $\tv$ corresponding to the eigenfunction $p_{n_*}$ and the rest $\tw$. Next, $\tv$ is once more split into a first term which incorporates the behaviour in the vicinity of the points $k^{(1)}, \dots, k^{(N)}$ and a rest. To this end, we first introduce some projections on $L^2(\cQ)$ which take into account the presence of the potential $\epsilon(\cdot)$.

\paragraph{Projections}
Let $P_k$ denote the standard $L^2(\cQ)$-projection onto the mode $p_{n_*}(\cdot,k),$ i.e. for $f\in L^2(\cQ)$
\begin{equation*}
(P_kf)(\cdot,k):=\langle f,p_{n_*}(\cdot,k)\rangle\,p_{n_*}(\cdot,k),
\end{equation*}
and let $Q_k:=I-P_k$ be its $L^2$-orthogonal projection. As the normalization of the mode $p_{n_*}(\cdot,k)$ holds in the $L^2$-norm weighted by the periodic potential $\epsilon(\cdot)$ (see \eqref{orthog_pn}), we also introduce
\begin{equation*}
(P_k^\epsilon f)(\cdot,k):=\langle f,\epsilon(\cdot)p_{n_*}(\cdot,k)\rangle\,p_{n_*}(\cdot,k)\quad\,\,\mbox{and}\quad\,\, Q_k^\epsilon:=I-P_k^\epsilon
\end{equation*}
as well as
\begin{equation*}
\ePk:=\epsilon(\cdot)P_k\quad\,\,\mbox{and}\quad\,\,\eQk:=I-\ePk=I-\epsilon(\cdot)P_k.
\end{equation*}
\begin{lem}\label{Lemma_proj_orthog}
	$\Pke,\Qke,\ePk,\eQk$ are projections in $L^2(\cQ)$ for which the following orthogonality conditions hold:
	\begin{enumerate}[i)]
		\item $\Pke L^2\perp_{L^2}\eQk L^2$,
		\item $\ePk L^2\perp_{L^2}\Qke L^2$,
		\item $\Pke L^2\perp_{L^2_\epsilon}\Qke L^2$,
		\item $\ePk L^2\perp_{L^2_{\epsilon^{-1}}}\eQk L^2$,
	\end{enumerate}
	where $L^2$ stands for $L^2(\cQ)$ and $L^2_w$ is the weighted $L^2(\cQ)$ by the weight $w(\cdot)$, i.e. $f\perp_{L^2_w}g$ means $\int_\cQ wf\cdot\overline g\dd x=0$.
\end{lem}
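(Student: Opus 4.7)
The proof is essentially a bookkeeping exercise built entirely on the single normalization identity
\[
\langle p_{n_*}(\cdot,k),\epsilon\,p_{n_*}(\cdot,k)\rangle=\int_\cQ\epsilon|p_{n_*}|^2\dd x=1,
\]
cf. \eqref{orthog_pn}, so no spectral information beyond the choice of normalization is used. The plan is first to verify idempotency for $\Pke$ and $\ePk$ (the complementary operators $\Qke$ and $\eQk$ are then automatically projections), and then to dispatch the four orthogonalities (i)--(iv) by direct computation.

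For the idempotency, I would simply plug in: $(\Pke)^2 f=\langle f,\epsilon p_{n_*}\rangle\,\langle p_{n_*},\epsilon p_{n_*}\rangle\,p_{n_*}=\Pke f$ using the normalization above, and similarly $(\ePk)^2 f=\epsilon\,\langle f,p_{n_*}\rangle\,\langle \epsilon p_{n_*},p_{n_*}\rangle\,p_{n_*}=\ePk f$, where I use that $\epsilon$ is real-valued (assumption (A1)) so that $\langle p_{n_*},\epsilon p_{n_*}\rangle=\langle \epsilon p_{n_*},p_{n_*}\rangle=1$. Then $\Qke=I-\Pke$ and $\eQk=I-\ePk$ are projections as well.

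For the four orthogonality relations, each reduces to expanding the inner product, pulling the scalar factors $\langle f,\cdot\rangle$ outside, and invoking the normalization to cancel two terms. For instance, in (i),
\[
\langle \Pke f,\eQk g\rangle=\langle f,\epsilon p_{n_*}\rangle\Bigl(\langle p_{n_*},g\rangle-\overline{\langle g,p_{n_*}\rangle}\,\langle p_{n_*},\epsilon p_{n_*}\rangle\Bigr)=0,
\]
and (ii) is completely symmetric, swapping the role of $\epsilon$ in the two factors. For (iii) I would write $\langle \Pke f,\Qke g\rangle_\epsilon=\langle \epsilon \Pke f,\Qke g\rangle$ and for (iv) $\langle \ePk f,\eQk g\rangle_{\epsilon^{-1}}=\langle \epsilon^{-1}\ePk f,\eQk g\rangle=\langle P_k f,\eQk g\rangle$; in both cases the weight collapses the leading $\epsilon$-factor so that the computation is identical in structure to (i)--(ii), again closing via the normalization.

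There is no real obstacle here: the only subtlety is to keep track of \emph{which} projection carries the $\epsilon$ on the left versus on the right, since $P_k$ by itself is not a projection (its square picks up a factor $\|p_{n_*}\|_{L^2}^2\neq 1$ in general). The roles of $\Pke$ and $\ePk$ are adjoint in a precise sense with respect to the weighted pairing, which is exactly what makes (iii) and (iv) symmetric to (i) and (ii).
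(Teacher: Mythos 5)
Your proof is correct and takes essentially the same approach as the paper: each of the four orthogonality relations follows by a direct computation that unfolds the definitions of the projections and closes via the normalization $\langle p_{n_*}(\cdot,k),\epsilon p_{n_*}(\cdot,k)\rangle=1$ from \eqref{orthog_pn}. The paper writes out only case (i) in full, remarking that (ii)--(iv) are analogous, whereas you indicate the cancellation for all four, but the underlying argument is the same.
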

\begin{proof}
	We prove just (\textit{i}), the proof of the claims (\textit{ii})-(\textit{iv}) being similar.
	
	Let $f\in\eQk L^2(\cQ)$, that is $f\in L^2(\cQ)$ such that $\ePk v=0$. Hence $\langle f, p_{n_*}(\cdot,k)\rangle=0$. Let moreover $g\in\Pke L^2(\cQ)$, i.e. $g=\Pke g$. Then,
	\begin{equation*}
	\langle f,g\rangle=\big\langle f,\langle g,\epsilon(\cdot)p_{n_*}(\cdot,k)\rangle\,p_{n_*}(\cdot,k)\big\rangle=\langle g,\epsilon(\cdot)p_{n_*}(\cdot,k)\rangle\langle f,p_{n_*}(\cdot,k)\rangle=0.
	\end{equation*}
\end{proof}

In the sequel the operator $L_k$ in \eqref{eq_Bloch} needs to be inverted with the inverse bounded independently of $\eps$. Recall that for $\omega=\omega_*$ the kernel of $L_k$ is non-trivial at $k\in W_{\omega_*}$, i.e. at $k\in\{k^{(1)},\dots,k^{(N)}\}$, cf. (A3). For $\omega=\omega_*+\eps^2\Omega$ the bound on the inverse explodes as $\eps\to 0$. The Lyapunov-Schmidt reduction based on the projections introduced above decomposes the problem into a critical and a regular part.
In particular, the projections $\ePk$ and $\Pke$ are onto the set of modes, the eigenfunctions of which attain in $\B$ the ``critical'' value $\omega_*$, cf. (A3). This means, we expect that the complementary projections produce an operator with the inverse bounded independently of $\eps$. This is what we prove in the following result.

\begin{lem}\label{invertibility}
	There exists $\eps_0>0$ such that for all $\eps\in (-\eps_0,\eps_0)$ and $\Omega=\pm 1$ the linear operator $\cL_k:=\eQk L_k\Qke\,\,:\,\,H_\#(\text{curl}^2,\cQ)\to\eQk L^2_\#(\cQ)$ with $\omega=\omega_*+\eps^2\Omega$ is invertible on $\Qke H_\#(\text{curl}^2,\cQ)$ and
	\begin{equation}\label{invert_regularity}
	\|\cL_k^{-1}\|_{\eQk L^2_\#\to\Qke H_\#(\text{curl}^2)}\leq C_\cL,
	\end{equation}
	where the constant $C_\cL$ is independent of $\varepsilon, \Omega,$ and $k$.
\end{lem}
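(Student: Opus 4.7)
The plan is to diagonalize the pencil $L_k = L^{(E)}_k - \omega^2\epsilon$ in the Bloch eigenbasis $\{p_n(\cdot,k)\}_{n\in\N}$ introduced in Sec.\ \ref{S:spec-E}, which is $L^2_\epsilon$-orthonormal by \eqref{orthog_pn}, and then write the inverse of $\cL_k$ explicitly. In this basis the projections are transparent: $\Qke u=u$ amounts to the absence of the $p_{n_*}$-component in the expansion of $u$ (equivalently, $\langle u,\epsilon p_{n_*}\rangle=0$), while $\eQk f=f$ amounts to $\langle f,p_{n_*}\rangle=0$, which, using the dual $L^2_{\epsilon^{-1}}$-orthonormal family $\{\epsilon p_n(\cdot,k)\}_n$, translates to the vanishing of the $\epsilon p_{n_*}$ coefficient in the expansion $f=\sum_n d_n\,\epsilon p_n(\cdot,k)$ with $d_n=\langle f,p_n\rangle_{L^2(\cQ)}$.

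First I would verify that $\cL_k$ maps $\Qke H_\#(\text{curl}^2,\cQ)$ into $\eQk L^2_\#(\cQ)$. For $u=\sum_{n\neq n_*}c_n p_n(\cdot,k)\in\Qke H_\#(\text{curl}^2)$, the eigenvalue equation \eqref{E:ev-eq-weak-uj} yields $L_k u=\sum_{n\neq n_*}c_n(\omega_n^2(k)-\omega^2)\,\epsilon p_n(\cdot,k)$, and the $L^2(\cQ)$-pairing of this with $p_{n_*}$ vanishes by the $L^2_\epsilon$-orthonormality; hence $\eQk L_k u = L_k u$, so that $\cL_k u=L_k u$ on $\Qke H_\#(\text{curl}^2)$. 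Then, for a given $f=\sum_{n\neq n_*}d_n\,\epsilon p_n(\cdot,k)\in\eQk L^2_\#(\cQ)$, I would define the inverse by
\[
u := \sum_{n\neq n_*}\frac{d_n}{\omega_n^2(k)-\omega^2}\,p_n(\cdot,k),
\]
for which $\cL_k u=f$ by direct computation. The $H_\#(\text{curl}^2,\cQ)$-bound on $u$ will then be assembled from three identities: $\|u\|_{L^2_\epsilon(\cQ)}^2=\sum_{n\neq n_*}|d_n|^2/|\omega_n^2(k)-\omega^2|^2$; the energy identity $\|\nabla_k'\times u\|_{L^2(\cQ)}^2=\sum_{n\neq n_*}\omega_n^2(k)\,|d_n|^2/|\omega_n^2(k)-\omega^2|^2$, obtained by integrating $(\nabla_k'\times\nabla_k'\times u)\cdot\bar u$ by parts (the boundary contributions cancel by $\Lambda$-periodicity) and applying \eqref{E:ev-eq-weak-uj}; and the algebraic identity $\nabla_k'\times\nabla_k'\times u=f+\omega^2\epsilon u$, which controls the double curl in $L^2(\cQ)$ once the first two norms are bounded.

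The main obstacle, and the only place where assumption (A3) is used essentially, is the production of a uniform spectral gap
\[
\delta \ :=\ \inf_{n\neq n_*,\;k\in\B}\bigl|\omega_n^2(k)-\omega_*^2\bigr|\ >\ 0.
\]
By (A3) the equality $\omega_n(k)=\omega_*$ forces $n=n_*$, so the distance is strictly positive at every fixed $(n,k)$; by Lipschitz continuity of each band function $k\mapsto\omega_n(k)$ (Lemma \ref{eigv_Lip}) and compactness of $\B$, the infimum is positive for each fixed $n$; uniformity in $n$ follows from the standard fact that the Bloch eigenvalues $\omega_n(k)$ of the elliptic pencil on $\cQ$ tend to $+\infty$ as $n\to\infty$, uniformly in $k\in\B$, so only finitely many bands can come close to $\omega_*$. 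Once $\delta$ is in hand, choosing $\varepsilon_0$ so small that $|\omega^2-\omega_*^2|=|2\omega_*\Omega\varepsilon^2+\varepsilon^4|\leq \delta/2$ yields $|\omega_n^2(k)-\omega^2|\geq \delta/2$ uniformly in $n\neq n_*$, $k\in\B$, $\Omega\in\{\pm 1\}$, and $\varepsilon\in(0,\varepsilon_0)$. The coefficient $1/|\omega_n^2(k)-\omega^2|^2$ is then uniformly bounded, and $\omega_n^2(k)/|\omega_n^2(k)-\omega^2|^2$ is bounded as well (dominated by $4\omega_n^2(k)/\delta^2$ for small $n$ and decaying like $\omega_n^{-2}(k)$ as $n\to\infty$). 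Combining these bounds delivers the desired constant $C_\cL$ independent of $\varepsilon$, $\Omega$, and $k\in\B$.
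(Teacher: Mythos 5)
There is a genuine gap: your diagonalization silently assumes that the Bloch eigenfunctions $\{p_n(\cdot,k)\}_{n\in\N}$ form a complete system, so that every $u\in\Qke H_\#(\text{curl}^2,\cQ)$ can be written as $\sum_{n\neq n_*}c_np_n(\cdot,k)$ and every $f\in\eQk L^2_\#(\cQ)$ as $\sum_{n\neq n_*}d_n\,\epsilon p_n(\cdot,k)$. This is false. The pencil $(\nabla_k'\times\nabla_k'\times\,\cdot\,,\epsilon)$ has, besides the eigenvalues $\omega_n^2(k)$, the eigenvalue $0$ with the infinite-dimensional eigenspace of curl-free fields $\nabla_k'H^1_\#(\cQ)$ (cf. Lemma \ref{L:helmh}), and every $p_n$ satisfies $\nabla_k'\cdot(\epsilon p_n)=0$, so this sector is invisible to your expansions. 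Concretely, $u=\nabla_k'\psi$ with $\psi\in C^\infty_\#$ lies in $\Qke H_\#(\text{curl}^2,\cQ)$ (since $\langle\nabla_k'\psi,\epsilon p_{n_*}\rangle=0$ by $\nabla_k'\cdot(\epsilon p_{n_*})=0$) but is $L^2_\epsilon$-orthogonal to all $p_n$, and likewise $f=\epsilon\nabla_k'\psi$ lies in $\eQk L^2_\#(\cQ)$ with all coefficients $d_n=\langle f,p_n\rangle=0$. Hence your explicit formula defines an inverse only on the closed span of $\{\epsilon p_n\}_{n\neq n_*}$, a proper subspace of $\eQk L^2_\#(\cQ)$, and the surjectivity claim is not established; even on the $\epsilon$-divergence-free sector $\{u:\nabla_k'\cdot(\epsilon u)=0\}$ the completeness of $\{p_n\}$ is not stated anywhere in the paper and would itself need a proof (via the complete system $\{q_j\}$ in $V_k$ and \eqref{pj}). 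The argument can be repaired: split both spaces with the Helmholtz decomposition of Lemma \ref{L:helmh}; on the gradient block $L_k$ acts as multiplication by $-\omega^2\epsilon$, which is uniformly invertible because $\omega\to\omega_*>0$ and $\epsilon,\epsilon^{-1}\in L^\infty$, and your series construction then handles the complementary block once completeness there is proved. The paper avoids all of this by not diagonalizing at all: it proves injectivity directly from $\omega\notin\sigma(L)$, surjectivity from the self-adjointness of $\cL_k$ (Lemma \ref{LemmaSA}) together with the closed range theorem, and the uniform bound from $\|\cA^{-1}\|\leq\dist(0,\sigma(\cA))^{-1}$.

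A secondary remark: your uniform spectral-gap step ($\delta=\inf_{n\neq n_*,k\in\B}|\omega_n^2(k)-\omega_*^2|>0$) is essentially right, but the asserted uniform-in-$k$ divergence $\omega_n(k)\to\infty$ is stated without proof; it needs a $k$-uniform comparison (e.g. a form bound $a_k(v,v)\geq c\|v\|_{H^1}^2-C\|v\|_2^2$ on $V_k$ with constants uniform for $k\in\B$, so that only finitely many bands meet a bounded interval). This is on the same level of brevity as the paper's own Step 3, so I would not count it as the decisive flaw; the missing treatment of the curl-free sector is.
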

\begin{remark}
	We point out that the operator $\cL_k$ depends on $\varepsilon$ via the factor $\omega^2$ in $L_k$.
\end{remark}
\begin{proof}
	\textbf{Step 1.} \textit{$\cL_k$ is injective}. By linearity of $\cL_k$ it is equivalent to show that if $v\in\Qke H_\#(\text{curl}^2,\cQ)$ such that $\cL_k v=0$, then $v=0$. Let thus $v\in H_\#(\text{curl}^2,\cQ)$ be such that $\Qke v=v$. $\cL_k v=0$ means that
	$$L_k\Qke v=L_k v\in\ker\eQk=\spann\{\epsilon p_{n_*}(\cdot,k)\}.$$
	In addition, the assumption $\Qke v=v$ implies $\Pke v =0,$ i.e. $\langle v,\epsilon p_{n_*}(\cdot,k)\rangle =0$. Hence
	\begin{equation*}
	\begin{split}
	L_k v&=\epsilon\langle L_kv,p_{n_*}(\cdot,k)\rangle p_{n_*}(\cdot,k)=\epsilon\langle v,L_kp_{n_*}(\cdot,k)\rangle p_{n_*}(\cdot,k)\\
	&=\epsilon (\omega_{n_*}^2(k)-\omega_*^2)\langle v,\epsilon p_{n_*}(\cdot,k)\rangle p_{n_*}(\cdot,k)=0.
	\end{split}
	\end{equation*}
	This implies $v=0$ because $\omega\not\in\sigma(L)=\bigcup_{k\in\B}\sigma(L_k)$.
	\vskip0.2truecm
	\textbf{Step 2.} \textit{$\cL_k$ is surjective}. The aim is to show that for any  $f\in\eQk L^2_\#(\cQ)$ there exists $u\in \Qke H_\#(\text{curl}^2,\cQ)$ such that $\cL_k u=f$.
	
	First, notice that $f\in \eQk L^2_\#(\cQ)$ if and only if $f\in L^2_\#(\cQ)$ and $\langle f,p_{n_*}(\cdot,k)\rangle=0$.
	
	By the closed range theorem the equation $\cL_k u=f$ is solvable in the domain of $\cL_k$, i.e. in $H_\#(\text{curl}^2,\cQ)$, if and only if $f\perp\ker\big(\cL_k\big)$. Here we are using that the operator $\cL_k$ is self-adjoint and we postpone the proof to the subsequent Lemma \ref{LemmaSA}. Let thus $v\in\ker\big(\!\eQk L_k\Qke\big)$, i.e. $v=\ePk L_k\Qke v$, then there holds
	\begin{equation*}
	\langle f,v\rangle=\langle \eQk f,\ePk L_k\Qke v\rangle=\langle\ePk L_k\eQk f,\Qke v\rangle,
	\end{equation*}
	where the symmetry of $\ePk L_k$ is shown in Lemma \ref{LemmaSA}. Noticing that by Lemma \ref{Lemma_proj_orthog}(ii) the spaces $\ePk L^2$ and $\Qke L^2$ are $L^2$-orthogonal, we deduce $\langle f,v\rangle=0$, i.e. $f\perp\ker\big(\cL_k\big)$. The sought function is then $\Qke u$, with $u$ given by the closed range theorem.
	
	\vskip0.2truecm
	\textbf{Step 3.} \textit{The estimate \eqref{invert_regularity}}. Recall first that for a linear self-adjoint operator $\cA$ acting on a Hilbert space, the well-known estimate
	$$\|\cA^{-1}\|\leq\dist(0,\sigma(\cA))^{-1}$$
	holds, where $\sigma(\cA)$ denotes its spectrum. The self-adjointness of $\eQk L_k\Qke$ is shown in Lemma \ref{LemmaSA}(ii), so we need to bound $\dist(0,\sigma(\eQk L_k\Qke))^{-1}$.
	
 For a fixed $\omega$ the spectrum of $L_k$ is given by $\cup_{j\in \N}\{\omega_j^2(k)-\omega^2\}$ as shown in Sec. \ref{S:spec-H}-\ref{S:spec-E}. The application of the projections $\eQk$ and $\Qke$ yields $\sigma(\cL_k)=\bigcup_{j\in\N\setminus\{n_*\}}\{\omega_j^2(k)-\omega^2\}$. By our assumptions on the band structure (A3) we infer that each of the remaining eigenvalues has some positive distance to $\omega_*$, hence for all $j\not=n_*$ we have $\inf_{k\in\B}|\omega_j^2(k)-\omega_*^2|=:m_j>0$. Since the map $j\mapsto\omega_j(k)$ is increasing for every $k\in\B$ fixed, then $m:=\min_{j\not=n_1,\dots,n_N}m_j>0$ is well-defined and we infer 
	\begin{equation*}
	\begin{split}
	\|(\eQk L_k\Qke)^{-1}\|_{\eQk L^2_\#\to\Qke H_\#(\text{curl}^2)}&\leq\dist(0,\sigma(\eQk L_k\Qke))^{-1}\\
	&\leq2\big(\min_{j\not=n_1,\dots,n_N}	\inf_{k\in\B} |\omega_j^2(k)-\omega^2|\big)^{-1}=\frac2m=:C_\cL
	\end{split}
	\end{equation*}
	for all $\varepsilon$ small enough.
\end{proof}

\begin{remark}
	Notice that without the projections estimate \eqref{invert_regularity} does not hold even if $\Omega \in \{-1,1\}$ is chosen such that $\omega=\omega_*+\eps^2\Omega$ lies outside the spectrum. The operator $L_k$ would be invertible but by \eqref{omega} we would only get $\dist(0,\sigma(L_k))=O(\varepsilon^2)$, whence $\|L_k^{-1}\|$ would not be bounded uniformly in $\varepsilon$.
\end{remark}

\begin{lem}\label{LemmaSA}
	\begin{enumerate}
		\item[(i)] The operator $\ePk L_k:H_\#({\rm curl}^2,\cQ)\to L^2_\#(\cQ)$ is symmetric.
		\item[(ii)]The operator $\cL_k:H_\#({\rm curl}^2,\cQ)\to \eQk L^2_\#(\cQ)$ is self-adjoint.
	\end{enumerate}
\end{lem}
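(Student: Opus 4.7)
The key observation is that on the eigenfunction $p_{n_*}(\cdot,k)$ the operator $L_k$ acts as multiplication by the \emph{real} scalar $\omega_{n_*}^2(k)-\omega^2$ against the weight $\epsilon$, i.e.\ $L_k p_{n_*}(\cdot,k)=(\omega_{n_*}^2(k)-\omega^2)\,\epsilon(\cdot)\,p_{n_*}(\cdot,k)$. One first checks that $L_k$ is itself $L^2_\#(\cQ)$-symmetric on $H_\#(\mathrm{curl}^2,\cQ)$: the curl-curl part is symmetric via a double integration by parts, using the pointwise identity $(i\tilde k\times A)\cdot\overline B=A\cdot\overline{i\tilde k\times B}$ for real $\tilde k=(k_1,k_2,\kappa)$, with boundary contributions on opposite faces of $\cQ$ cancelling by periodicity, while $\omega^2\epsilon$ is a real multiplication operator. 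Combining this symmetry with the eigenvalue equation gives $\langle L_k u,p_{n_*}\rangle=\langle u,L_k p_{n_*}\rangle=(\omega_{n_*}^2(k)-\omega^2)\langle u,\epsilon p_{n_*}\rangle$, so that $\ePk L_k$ collapses to the rank-one map $u\mapsto(\omega_{n_*}^2(k)-\omega^2)\langle u,\epsilon p_{n_*}\rangle\,\epsilon p_{n_*}(\cdot,k)$. This is manifestly $L^2$-symmetric since $\omega_{n_*}^2(k)-\omega^2\in\R$ and the bra and the ket involve the same vector; explicitly, both $\langle\ePk L_k u,v\rangle$ and $\langle u,\ePk L_k v\rangle$ equal $(\omega_{n_*}^2(k)-\omega^2)\langle u,\epsilon p_{n_*}\rangle\,\overline{\langle v,\epsilon p_{n_*}\rangle}$.

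\textbf{Part (ii), symmetry.} I would first record the $L^2(\cQ)$-adjoint relations $(\ePk)^*=\Pke$ and $(\eQk)^*=\Qke$, which follow from a one-line calculation, e.g.\ $\langle\ePk f,g\rangle=\langle f,p_{n_*}\rangle\,\langle\epsilon p_{n_*},g\rangle=\langle f,\Pke g\rangle$. Together with the $L^2$-symmetry of $L_k$ from (i), this yields, for every $u,v\in H_\#(\mathrm{curl}^2,\cQ)$,
\[
\langle \cL_k u,v\rangle=\langle \eQk L_k\Qke u,v\rangle=\langle L_k\Qke u,\Qke v\rangle=\langle \Qke u,L_k\Qke v\rangle=\langle u,\eQk L_k\Qke v\rangle=\langle u,\cL_k v\rangle,
\]
so $\cL_k$ is $L^2$-symmetric.

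\textbf{Part (ii), self-adjointness.} The right ambient Hilbert space is the $\epsilon$-weighted space $L^2_\epsilon(\cQ)$ with inner product $\langle f,g\rangle_\epsilon:=\langle f,\epsilon g\rangle$: there the Bloch modes are orthonormal by \eqref{orthog_pn}, the projections $\Pke,\Qke$ are orthogonal by Lemma~\ref{Lemma_proj_orthog}(iii), and $\epsilon^{-1}L^{(E)}_k$, hence $\epsilon^{-1}L_k=\epsilon^{-1}L^{(E)}_k-\omega^2 I$, is the self-adjoint Maxwell operator underlying Sec.~\ref{S:spec-E}. The commutation identity $\epsilon^{-1}\eQk=\Qke\,\epsilon^{-1}$, verified by evaluating both sides on an arbitrary $f\in L^2_\#(\cQ)$, yields $\epsilon^{-1}\cL_k=\Qke(\epsilon^{-1}L_k)\Qke$, i.e.\ the $\langle\cdot,\cdot\rangle_\epsilon$-orthogonal compression of a self-adjoint operator to the closed invariant subspace $\Qke L^2_\#(\cQ)$; such a compression is self-adjoint in that subspace, and multiplication by the isomorphism $\epsilon$ transports this into self-adjointness of $\cL_k$ on $\eQk L^2_\#(\cQ)$. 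I expect the main delicacy to be not algebraic but conceptual, namely fixing the right Hilbert space structure: because $\ePk,\eQk$ are not $L^2$-orthogonal projections, the standard symmetric-implies-self-adjoint shortcut cannot be invoked in plain $L^2$, and it is the $\epsilon$-conjugation that realigns $\eQk$ with the $L^2_\epsilon$-orthogonal projection $\Qke$ and makes everything consistent.
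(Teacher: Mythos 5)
Your part (i) and the symmetry half of part (ii) are correct and, in substance, the same computation as the paper's: the paper also combines the symmetry of $L_k$ with the eigenvalue equation for $p_{n_*}(\cdot,k)$, only written out termwise rather than via your rank-one formula $\ePk L_k u=(\omega_{n_*}^2(k)-\omega^2)\langle u,\epsilon p_{n_*}\rangle\,\epsilon p_{n_*}$, and your use of the adjoint relations $(\ePk)^*=\Pke$, $(\eQk)^*=\Qke$ gives a cleaner proof of symmetry of $\cL_k$ than the paper's route through the identity $\cL_k v=L_k v-(\omega_*^2-\omega^2)\langle v,\epsilon p_{n_*}\rangle\epsilon p_{n_*}$. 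For self-adjointness your route is genuinely different: the paper stays in plain $L^2_\#(\cQ)$, uses that identity to exhibit $\cL_k$ as $L_k$ plus a bounded symmetric rank-one perturbation, and verifies directly that $D(\cL_k^*)\subset H_\#({\rm curl}^2,\cQ)=D(\cL_k)$ using the self-adjointness of $L_k$; you instead pass to the $\epsilon$-weighted inner product, where $\Pke,\Qke$ become orthogonal and $\epsilon^{-1}\cL_k=\Qke(\epsilon^{-1}L_k)\Qke$, and argue via a compression/reduction of the self-adjoint pencil operator. Both approaches consume the same external input (self-adjointness of the Maxwell operator from Dauge et al.); the paper's is more pedestrian but fully explicit, yours is more structural.

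Two points in your last step need tightening before the argument is complete. First, "the orthogonal compression of a self-adjoint operator to a closed invariant subspace is self-adjoint" is not a valid black-box principle for unbounded operators; what saves you here is that $p_{n_*}(\cdot,k)\in H_\#({\rm curl}^2,\cQ)$ is an eigenfunction of $T:=\epsilon^{-1}L_k$, so ${\rm span}\{p_{n_*}(\cdot,k)\}$ and its $\epsilon$-orthogonal complement $\Qke L^2_\#(\cQ)$ are reducing subspaces (invariance of the complement follows from $\langle Tu,p_{n_*}\rangle_\epsilon=\langle u,Tp_{n_*}\rangle_\epsilon=(\omega_{n_*}^2(k)-\omega^2)\langle u,p_{n_*}\rangle_\epsilon=0$, and $\Qke$ maps $D(T)$ into $D(T)$ because $p_{n_*}\in D(T)$); you assert invariance but neither verify it nor invoke the reducing structure, and without it the step fails. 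Second, the concluding sentence "multiplication by $\epsilon$ transports this into self-adjointness of $\cL_k$ on $\eQk L^2_\#(\cQ)$" does not typecheck as stated, since the domain $\Qke H_\#({\rm curl}^2,\cQ)$ is not a subspace of $\eQk L^2_\#(\cQ)$. The clean statement matching the paper is: $\epsilon^{-1}\cL_k$ with domain $H_\#({\rm curl}^2,\cQ)$ decomposes as (compression on $\Qke L^2$)$\,\oplus\,0$ with respect to $L^2_\epsilon=\Qke L^2\oplus_\epsilon{\rm span}\{p_{n_*}\}$, hence is self-adjoint in $L^2_\epsilon$, and self-adjointness of $\epsilon^{-1}\cL_k$ in $L^2_\epsilon$ is equivalent (same domain, $\langle\epsilon^{-1}\cL_k u,v\rangle_\epsilon=\langle\cL_k u,v\rangle_{L^2}$) to self-adjointness of $\cL_k$ in the unweighted $L^2_\#(\cQ)$, which is what Lemma \ref{invertibility} uses. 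With these two repairs your argument is a correct alternative proof.
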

\begin{proof}
	\textit{(i)} Let $v,w\in H_\#(\text{curl}^2,\cQ)$, then
	\begin{equation*}
	\begin{split}
	\langle\ePk L_k v,w\rangle&=\langle P_kL_kv,\epsilon w\rangle=\langle L_kv,p_{n_*}(\cdot,k)\rangle\langle p_{n_*}(\cdot,k),\epsilon w\rangle=\langle v,L_kp_{n_*}(\cdot,k)\rangle\langle\epsilon p_{n_*}(\cdot,k),w\rangle\\
	&=\langle v,\epsilon p_{n_*}(\cdot,k)\rangle\langle \omega_{n_j}(k_*)^2\epsilon p_{n_*}(\cdot,k),w\rangle=\langle v,\epsilon p_{n_*}(\cdot,k)\rangle\langle L_k p_{n_*}(\cdot,k),w\rangle\\
	&
	=\langle v,\epsilon p_{n_j}(\cdot,k)\rangle\langle p_{n_*}(\cdot,k),L_k w\rangle
	=\langle v,\epsilon p_{n_*}(\cdot,k)\rangle\overline{\langle L_k w,p_{n_*}(\cdot,k)\rangle}=\langle v,\ePk L_kw\rangle.
	\end{split}
	\end{equation*}
	\textit{(ii)} First we claim that for $v\in H_\#(\text{curl}^2,\cQ)$ one can rewrite $\cL_kv$ as
	\begin{equation}\label{cL_k_rewritten}
	\cL_kv=L_kv-(\omega_*^2-\omega^2)\langle v,\epsilon p_{n_*}(\cdot,k)\rangle\epsilon p_{n_*}(\cdot,k).
	\end{equation}
	Indeed, recalling the expression of the projections $\eQk$ and $\Qke$, one finds
	\begin{equation*}
	\begin{split}
	\cL_kv&=\eQk L_k\Qke v=L_k\Qke v-\langle L_k\Qke v,p_{n_*}(\cdot,k)\rangle\epsilon p_{n_*}(\cdot,k)\\
	&=L_k\left(v-\langle v,\epsilon p_{n_*}(\cdot,k)\rangle p_{n_*}(\cdot,k)\right)-\langle\left(v-\langle v,\epsilon p_{n_*}(\cdot,k)\rangle p_{n_*}(\cdot,k)\right),L_kp_{n_*}(\cdot,k)\rangle\epsilon p_{n_*}(\cdot,k)\\
	&=L_kv-\langle v,\epsilon p_{n_*}(\cdot,k)\rangle L_kp_{n_*}(\cdot,k)-\langle v,L_kp_{n_*}(\cdot,k)\rangle\epsilon p_{n_*}(\cdot,k)\\
	&\quad+\langle v,\epsilon p_{n_*}(\cdot,k)\rangle\langle p_{n_*}(\cdot,k),L_kp_{n_*}(\cdot,k)\rangle \epsilon p_{n_*}(\cdot,k),
	\end{split}
	\end{equation*}
	having used the symmetry of $L_k$ on $H_\#(\text{curl}^2,\cQ)$. Identity \eqref{cL_k_rewritten} is then proven recalling that $p_{n_*}(\cdot,k)$ are normalised as in \eqref{orthog_pn} and $L_kp_{n_*}(\cdot,k)=(\omega_*^2-\omega^2)\epsilon p_{n_*}(\cdot,k)$.
	
	Next, we show that $D(\cL_k^*)\subset H_\#(\text{curl}^2,\cQ)$, where $\cL_k^*$ denotes the adjoint operator of $\cL_k$. Let $\phi\in D(\cL_k^*)$, then there exists $\eta\in L^2_\#(\cQ)$ so that $\langle\cL_kv,\phi\rangle=\langle v,\eta\rangle$ for all $v\in D(\cL_k)=H_\#(\text{curl}^2,\cQ)$. Using \eqref{cL_k_rewritten} and reorganising the terms, one obtains
	\begin{equation}\label{Lk_adj}
	\langle L_kv,\phi\rangle=\langle v,\eta+(\omega_*^2-\omega^2)\langle\phi,\epsilon p_{n_*}(\cdot,k)\rangle\epsilon p_{n_*}(\cdot,k)\rangle=:\langle v,\tilde\eta\rangle
	\end{equation}
	with $\tilde\eta\in L^2_\#(\cQ)$. This means that $\phi\in D(L_k^*)=D(L_k)=H_\#(\text{curl}^2,\cQ)$ since $L_k$ is self-adjoint \cite{Dauge_et_al}.
	
	Finally, for $\phi\in H_\#(\text{curl}^2,\cQ)$ one has $\langle L_kv,\phi\rangle=\langle v,L_k\phi\rangle$, therefore from \eqref{cL_k_rewritten} and \eqref{Lk_adj} one infers
	\begin{equation*}
	\begin{split}
	\langle\cL_kv,\phi\rangle&=\langle v,L_k\phi\rangle-(\omega_*^2-\omega^2)\langle v,\epsilon p_{n_*}(\cdot,k)\rangle\langle\epsilon p_{n_*}(\cdot,k),\phi\rangle\\
	&=\langle v,L_k\phi-(\omega_*^2-\omega^2)\langle\phi,\epsilon p_{n_*}(\cdot,k)\rangle\epsilon p_{n_*}(\cdot,k)\rangle=\langle v,\cL_k\phi\rangle.
	\end{split}
	\end{equation*}
	This implies $\phi\in D(\cL_k^*)$ and that $\cL_k$ is symmetric. Having proved that $D(\cL_k^*)=H_\#(\text{curl}^2,\cQ)=D(\cL_k)$ and the symmetry, we conclude that $\cL_k$ is self-adjoint.
\end{proof}

\paragraph{Decomposition of the solution}

We decompose the solution $\tu$ of \eqref{eq_Bloch} using the above projections as 
\begin{equation}\label{u}
\tu(x,k)=\tv(x,k)+\tw(x,k),
\end{equation}
where
\begin{equation*}
\tv(x,k):=\Pke\tu(x,k)=\langle \tu(\cdot,k),\epsilon(\cdot)p_{n_*}(\cdot,k)\rangle\,p_{n_*}(x,k)=:U(k)p_{n_*}(x,k)
\end{equation*}
and
\begin{equation*}
\tw(x,k):=\Qke\tu(x,k),
\end{equation*}
and where we have defined
\begin{equation*}\label{Uj}
U(k):=\langle\tu(\cdot,k),\epsilon(\cdot)p_{n_*}(\cdot,k)\rangle.
\end{equation*}
We note that $U$ is $\Lambda$-periodic because $\tu(x,\cdot)$ and $p_{n_*}(x,\cdot)$ are quasiperiodic. 

Now we project suitably equation \eqref{eq_Bloch} and find an equivalent system of two equations, the linear part of which is decoupled. On the one hand, applying $P_k$ to \eqref{eq_Bloch}, we find
\begin{equation*}
\begin{split}
\omega^2\langle\tF(\tu)(\cdot,k),p_{n_*}(\cdot,k)\rangle&=\langle\tu(\cdot,k),\big(\nabla'_k\times\nabla'_k\times-\omega^2\epsilon(\cdot)\big)p_{n_*}(\cdot,k)\rangle\\
&=\langle\tu(\cdot,k),\big(\omega_{n_*}^2(k)-\omega^2\big)\epsilon(\cdot)p_{n_*}(\cdot,k)\rangle.
\end{split}
\end{equation*}
By the definition of $U$ we have
\begin{equation}\label{v_Eq}
\big(\omega_{n_*}^2(k)-\omega^2\big)U(k)=\omega^2\langle\tF(\tu)(\cdot,k),p_{n_*}(\cdot,k)\rangle.
\end{equation}
On the other hand we get
\begin{equation}\label{w_Eq}
\eQk L_k\Qke\tw(x,k)=\omega^2\eQk\tF(\tu)(x,k).
\end{equation}
Indeed, $L_k\tu(x,k)=L_k\left(\tv(x,k)+\tw(x,k)\right)$, where $\tw(x,k)=\Qke\tw(x,k)$ and
\begin{equation*}
\begin{split}
\eQk L_k\tv(x,k)&=\eQk\left(U(k)\left(\omega_{n_*}^2(k)-\omega^2\right)\epsilon(\cdot)p_{n_*}(\cdot,k)\right)\\
&=U(k)\left(\omega_{n_*}^2(k)-\omega^2\right)\left(I-\epsilon(\cdot)P_k\right)\left(\epsilon(\cdot)p_{n_*}(\cdot,k)\right)\\
&=U(k)\left(\omega_{n_*}^2(k)-\omega^2\right)\left(\epsilon(\cdot)p_{n_*}(\cdot,k)-\epsilon(\cdot)\langle\epsilon(\cdot)p_{n_*}(\cdot,k),p_{n_*}(\cdot,k)\rangle p_{n_*}(\cdot,k)\right)\\
&=0,
\end{split}
\end{equation*}
by the normalization \eqref{orthog_pn} of the Bloch eigenfunctions.

Next, we decompose further $\tw$ into
\begin{equation*}
\tw(x,k)=\tw_0(x,k)+\tw_R(x,k),
\end{equation*}
where $\tw_0$ and $\tw_R$ solve the equations
\begin{equation}\label{w0_eq}
\eQk L_k\Qke\tw_0(x,k)=\omega^2\eQk\tF(\tv)(x,k),
\end{equation}
\begin{equation}\label{wR_eq}
\eQk L_k\Qke\tw_R(x,k)=\omega^2\eQk(\tF(\tu)-\tF(\tv))(x,k).
\end{equation}

The system \eqref{v_Eq}, \eqref{w0_eq}, and \eqref{wR_eq} is an equivalent reformulation of equation \eqref{eq_Bloch}. We search for a solution $\tu$ (represented by the variables $U,\tw_0,\tw_R$) which is close to the Bloch transformation of the formal ansatz $u_\text{ans}$. In detail, for the sought solution the components $\tw_0$ and $\tw_R$ are small and $U$ is concentrated at the points $\kj, j=1,\dots,N$ and near the concentration points $\kj$ it approximates $\hat{A}_j$, where $A$ is a solution of the CMEs.
The Bloch transformation of the formal ansatz is
\begin{equation}\label{ansatz_Bloch_var}
\tu_\text{ans}(x,k)=\frac1\varepsilon\sum_{j=1}^N\sum_{K\in\Lambda^*}\hA_j\left(\frac{k-k^{(j)}+K}\varepsilon\right)p_{n_*}(x,k^{(j)})e^{\ri K\cdot x}
\end{equation}
using \eqref{Bloch_periodic} and the fact that $\big(A(\varepsilon\cdot)e^{\ri k^{(j)}\cdot}\big)^\land(k)=\varepsilon^{-2}\hA(\varepsilon^{-1}(k-k^{(j)}))$. Since $\hA_j(\varepsilon^{-1}(k-k^{(j)}))$ is concentrated near $k=k^{(j)}$, we decompose $U$ on $\B$ into $N+1$ parts with the first $N$ being compactly supported in the vicinity of one of the points $\kj$ and the last one supported away from all $\kj$. This is then extended $\Lambda^*$-periodically onto $\R^2$. We write
\begin{equation*}
U(k)=\sum_{k=1}^N\frac1\varepsilon\hB_j\left(\frac{k-\kj}\varepsilon\right) + \hC(k), \quad k \in \R^2,
\end{equation*}
where
\beq\label{E:BjC-supp}
\supp\left(\hB_j\left(\frac{\cdot-\kj}\varepsilon\right)\right)\cap \B = B_{\varepsilon^{r}}(\kj)\quad \text{ and }\quad \supp (\hC) \cap \B = \B\setminus \bigcup_{j=1}^N B_{\varepsilon^{r}}(\kj)
\eeq
and where $\hC$ is $\Lambda^*$-periodic and $\hB_j$ is $\eps^{-1}\Lambda^*$-periodic on $\R^2$. That means
\begin{equation*}
\hC(k+K)=\hC(k), \quad \hB_j(k+\varepsilon^{-1}K)=\hB_j(k)\qquad\mbox{for all}\; K\in\Lambda^*\;\mbox{and}\; k\in\R^2.
\end{equation*}
Here $r\in(0,1)$ is a parameter to be specified to suit the nonlinear estimates. Moreover we define $\hB_j^*$ and $\hC^*$ as the restrictions of such functions to the periodicity cell, i.e.
\begin{equation*}
\hB_j^*:=\chi_{\varepsilon^{-1}\B}\hB_j\qquad\mbox{and}\qquad\hC^*:=\chi_{\B}\hC.
\end{equation*}

We point out that in $\hB_j$ and $\hC$  the $\,\,\widehat{\cdot}\,\,$ notation does not refer to the Fourier transform of given functions $B_j,C$; it just stresses out the connection between $\hB_j$ and $\hA_j$, the latter of course being the Fourier transform of $A_j$. With this further decomposition the sought solution has the components $\hB_j^*$ close to $\hA_j$ and the component $\hC^*$ small. Note that $\hA_j, j=1,\dots,N$, satisfies equation \eqref{CMEs_phys} transformed in Fourier variables, i.e.
\begin{equation}\label{CMEs_FT}
\Omega \hA_j-\frac12\big(k_1^2\partial_{k_1}^2\omega_{n_*}(\kj)+k_2^2\partial_{k_2}^2\omega_{n_*}(\kj)+2k_1k_2\partial^2_{k_1k_2}\omega_{n_*}(\kj)\big)\hA_j+\hcN_j=0, \quad k \in \R^2.
\end{equation}

The aim now is to apply a fixed point argument to solve system \eqref{v_Eq}, \eqref{w0_eq}, and \eqref{wR_eq}, which is of course coupled in the components $(\hB_j)_{j=1}^N, \hC, \tw_0, \tw_R$. 
The equations for the components $\tw_0$ and $\tw_R$ both involve the linear operator $\eQk L_k\Qke$, see \eqref{w0_eq},\eqref{wR_eq}. This operator is boundedly invertible on its image, by Lemma \ref{invertibility}, and the bound on the inverse is independent of $\varepsilon$. This is thanks to the fact that the projection $\Qke$ projects out the Bloch eigenfunction $p_{n_*}$.

We will also make use of the notation 
\begin{equation}\label{vB}
\tvB(x,k):=\sum_{j=1}^N\tvBj(x,k)\quad\mbox{where}\quad\tvBj(x,k):=\frac1\varepsilon\hB_j\left(\frac{k-\kj}\varepsilon\right)p_{n_*}(x,k)
\end{equation}
and
\begin{equation*}
\tvC(x,k):=\hC(k)p_{n_*}(x,k).
\end{equation*}
We can thus write
\begin{equation*}
\tv(x,k)=\tvB(x,k)+\tvC(x,k).
\end{equation*}

\vskip0.4truecm
Inspired by the strategy of \cite{DP,DU,DS}, our algorithm to construct a solution $\tu\in\cX_2$ of our problem \eqref{eq_Bloch} is the following nested fixed point argument.
\begin{enumerate}[(1)]
	\item For any given $\tv$ bounded, determine the unique small solution $\tw_0$ of the linear problem \eqref{w0_eq} by means of Lemma \ref{invertibility};
	\item For any given $\tv$ bounded and $\tw_0$ from Step 1, apply the Banach fixed point theorem to \eqref{wR_eq} in a neighbourhood of zero to find a small solution $\tw_R$;
	\item For any given $N$-tuple $(\hB_j)_{j=1}^N$ with $(\hBp_j)_{j=1}^N$ decaying sufficiently fast, find a small $\hC$ with support as in \eqref{E:BjC-supp} applying the Banach fixed point theorem to \eqref{v_Eq} on this support;
	\item For $\hC$ given by Step 3 prove the existence of such solutions $(\hBp_j)_{j=1}^N$ to \eqref{v_Eq} which are close to $(\hA_j)_{j=1}^N$ and have the support as in \eqref{E:BjC-supp}. It is here where
	the restriction to a $\PT$-symmetric subspace is used.
\end{enumerate}
The rest of the section carries this algorithm out.

\subsection{Preliminary Estimates}

We define for convenience $L^2:=L^2(\R^2)$ and $L^2_{s_B}:=L^2_{s_B}(\R^2)$ (the weighted space $L^2_s(\R^2)$ defined in \eqref{Ls}). 
\begin{lem}\label{Lemma1}
It holds
	\begin{equation*}
	\|\tv_B\|_{\cX_2}\leq c\sum_{j=1}^N\|\hB_j\|_{L^2(\varepsilon^{-1}\B)}, \quad \|\tv_C\|_{\cX_2}\leq c\,\|\hC\|_{L^2(\B)}.
	\end{equation*}
\end{lem}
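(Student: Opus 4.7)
The plan is to reduce both estimates to a single fact: the Bloch eigenfunctions $p_{n_*}(\cdot,k)$ are uniformly bounded in $H^2(\cQ)$ for $k\in\B$. Once this is in hand, both inequalities follow by direct computation using the definitions \eqref{vB} and, for the $\tvB$ part, a change of variables exploiting the support information in \eqref{E:BjC-supp}.

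First I would establish the uniform bound
$$M := \sup_{k\in\B}\|p_{n_*}(\cdot,k)\|_{H^2(\cQ)} < \infty.$$
The normalisation \eqref{orthog_pn} together with $\epsilon\in L^\infty$ and $\epsilon^{-1}\in L^\infty$ (assumption (A5)) gives $\|p_{n_*}(\cdot,k)\|_{L^2(\cQ)}\leq c$ uniformly in $k\in\B$. The Bloch eigenvalue equation \eqref{E:ev-eq-weak-uj} can be read as an inhomogeneous Maxwell problem $L_k^{(E)}p_{n_*}(\cdot,k)=\omega_{n_*}^2(k)\epsilon\,p_{n_*}(\cdot,k)$, with right-hand side uniformly bounded in $L^2(\cQ)$ (here I use the continuity of $k\mapsto\omega_{n_*}(k)$ on $\B$, cf.\ Lemma \ref{eigv_Lip}, which makes $\omega_{n_*}$ bounded). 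Repeating the Helmholtz-based regularity argument already carried out for \eqref{E:u-inhom-f} (i.e.\ bounding the curl by passing to the associated $H$-field and the divergence by \eqref{E:divk-u} using $\epsilon\in W^{2,\infty}$), I obtain $\|p_{n_*}(\cdot,k)\|_{H^2(\cQ)}\leq c$, with a constant independent of $k$. This gives $M<\infty$.

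Second, the estimate on $\tvC$ is then immediate. Since $\tvC(x,k)=\hC(k)p_{n_*}(x,k)$ and $\hC$ is a scalar function of $k$ alone,
$$\|\tvC\|_{\cX_2}^2=\int_\B|\hC(k)|^2\,\|p_{n_*}(\cdot,k)\|_{H^2(\cQ)}^2\,\dd k\;\leq\;M^2\,\|\hC\|_{L^2(\B)}^2,$$
giving the second inequality with $c=M$.

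Third, for the estimate on $\tvB$ I use the triangle inequality $\|\tvB\|_{\cX_2}\leq\sum_{j=1}^N\|\tvBj\|_{\cX_2}$ and estimate each summand separately. By \eqref{E:BjC-supp} the function $k\mapsto\eps^{-1}\hB_j\big((k-\kj)/\eps\big)$ is supported in $B_{\eps^r}(\kj)\subset\B$ (for $\eps$ small), so
$$\|\tvBj\|_{\cX_2}^2=\int_{B_{\eps^r}(\kj)}\frac{1}{\eps^2}\Big|\hB_j\big(\tfrac{k-\kj}{\eps}\big)\Big|^2\|p_{n_*}(\cdot,k)\|_{H^2(\cQ)}^2\dd k.$$
The change of variables $l=(k-\kj)/\eps$ transforms this into
$$\|\tvBj\|_{\cX_2}^2=\int_{B_{\eps^{r-1}}(0)}|\hB_j(l)|^2\,\|p_{n_*}(\cdot,\kj+\eps l)\|_{H^2(\cQ)}^2\dd l\;\leq\;M^2\,\|\hB_j\|_{L^2(\eps^{-1}\B)}^2,$$
where in the last step I used $B_{\eps^{r-1}}(0)\subset\eps^{-1}\B$ for $\eps$ small (since $r<1$) together with the uniform bound $M$. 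Summing in $j$ yields the first inequality. The only non-routine step is the uniform $H^2$-bound on $p_{n_*}(\cdot,k)$; everything else is bookkeeping.
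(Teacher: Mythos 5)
Your proof is correct and follows essentially the same route as the paper: the whole lemma rests on the uniform bound $\sup_{k\in\B}\|p_{n_*}(\cdot,k)\|_{H^2(\cQ)}<\infty$, which the paper provides as Lemma \ref{Lemma_Reg_supH^2} in the Appendix (your sketch of it reproduces that argument), after which both estimates are the same bookkeeping. The only cosmetic difference is that you pass to $L^2(\eps^{-1}\B)$ via the support information \eqref{E:BjC-supp} and a change of variables, whereas the paper shifts the $k$-integral to $\B+\kj$ using the $\eps^{-1}\Lambda^*$-periodicity of $\hB_j$; both are equivalent.
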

\begin{proof}
	Using the regularity result in Lemma \ref{Lemma_Reg_supH^2}, we have $\|p_{n_*}(\cdot,k)\|_{H^2(\cQ)}$ uniformly bounded in $k\in\B$. Therefore,
	\begin{equation*}
	\begin{split}
	\|\tv_C\|_{\cX_2} & \leq \esssup_{k\in\B}\|p_{n_*}(\cdot,k)\|_{H^2(\cQ)}\|\hC\|_{L^2(\B)}\leq c\,\|\hC\|_{L^2(\B)},\\
	\|\tv_B\|_{\cX_2}&\leq\esssup_{k\in\B}\|p_{n_*}(\cdot,k)\|_{H^2(\cQ)}\frac 1\varepsilon\sumjN\left\|\hB_j\left(\frac{\cdot-k^{(j)}}{\varepsilon}\right)\right\|_{L^2(\B+\kj)}\\
	&\leq c\sumjN\left(\int_{\varepsilon^{-1}\B}|\hB_j(t)|^2\dd t\right)^{\frac12}=c\sum_{j=1}^N\|\hB_j\|_{L^2(\varepsilon^{-1}\B)}.
\end{split}
	\end{equation*}
Note that in the first inequality for $\|\tv_B\|_{\cX_2}$ the $k^{(j)}$-shift of the $k$-integral is allowed due to the periodicity of $\hB_j$.
	\end{proof}
Clearly, the estimate in Lemma \ref{Lemma1} is $O(1)$ in $\eps$. In estimating the residual below, it will be necessary to show the smallness of the nonlinearity in the $\cX_2$-norm for $\varepsilon$ small. Inspired by (33) in \cite{DU-err}, the next Lemma produces this smallness for the components $\tvB$ if $s_B \geq 2$.
\begin{lem}\label{LemmaDU} Let $s_B\geq 2$. For $a,b\in\{1,2,3\}$ and $\tvB$ defined in \eqref{vB} there holds
	\begin{equation*}
	\|\tv_{B,a}\astB\tv_{B,b}\|_{\cX_2}\leq c\varepsilon\sum_{i,j=1}^N\|\hBp_i\|_{L^2_{s_B}}\|\hBp_j\|_{L^2_{s_B}}.
	\end{equation*}
\end{lem}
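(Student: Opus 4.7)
The plan is to expand $\tvB=\sum_{j=1}^N\tvBj$ and use bilinearity of $\astB$ to reduce the claim to the pair-by-pair estimate
\begin{equation*}
\|\tv_{B_i,a}\astB\tv_{B_j,b}\|_{\cX_2}\leq c\,\varepsilon\,\|\hBp_i\|_{L^2_{s_B}}\|\hBp_j\|_{L^2_{s_B}},\qquad i,j\in\{1,\dots,N\},
\end{equation*}
summation over which yields the desired bound. To obtain this pair estimate I would write out the convolution using the definition of $\tvBj$ and, invoking the freedom to integrate over any shifted Brillouin zone granted by the quasiperiodicity of the Bloch transform, substitute $l=\kj+\varepsilon m$ in the inner integral. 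After this change of variables the integrand becomes the product of $\hBp_j(m)$, a shifted copy $\hBp_i(\varepsilon^{-1}(k-\ki-\kj)-m)$, and two Bloch eigenfunctions evaluated at the slowly-perturbed arguments $\kj+\varepsilon m$ and $k-\kj-\varepsilon m$. The $\varepsilon^2$ Jacobian cancels the two $\varepsilon^{-1}$ prefactors carried by $\tv_{B_i,a}$ and $\tv_{B_j,b}$, while the joint support constraints on $\hBp_i,\hBp_j$ force $k$ to lie in $\ki+\kj+B_{2\varepsilon^r}(0)$ modulo $\Lambda^*$.

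Next, I would estimate the $H^2(\cQ)$-norm of the integrand in $x$ pointwise in $k$. The algebra property of $H^2(\cQ)$ (valid on $\cQ\subset\R^2$ since $2>1$) together with the uniform bound on $p_{n_*}(\cdot,k)$ in $H^2(\cQ)$ supplied by Lemma \ref{Lemma_Reg_supH^2} allows the product of the two Bloch factors to be absorbed into a $k$- and $m$-independent constant, yielding
\begin{equation*}
\|(\tv_{B_i,a}\astB\tv_{B_j,b})(\cdot,k)\|_{H^2(\cQ)}\leq C\,(|\hBp_i|\ast|\hBp_j|)\bigl(\varepsilon^{-1}(k-\ki-\kj)\bigr),
\end{equation*}
where $\ast$ denotes the ordinary $\R^2$-convolution. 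The reduction of the Bloch-type convolution to a convolution on $\R^2$ is legitimate because, for $\varepsilon$ small and $r\in(0,1)$, the common support $B_{\varepsilon^{r-1}}(0)$ of $\hBp_i,\hBp_j$ fits comfortably inside the rescaled periodicity cell $\varepsilon^{-1}\B$, so the $\varepsilon^{-1}\Lambda^*$-periodic extensions of $\hB_i,\hB_j$ coincide with $\hBp_i,\hBp_j$ on all arguments relevant to the integral.

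To finish, I would compute the square of the $\cX_2$-norm by integrating over the shifted cell $\ki+\kj+\B$ and substitute $k-\ki-\kj=\varepsilon\eta$. The Jacobian $\varepsilon^2$ inside the $L^2$-integral supplies, after taking the square root, the $\varepsilon$-prefactor of the claim, while the remaining factor is $\||\hBp_i|\ast|\hBp_j|\|_{L^2(\R^2)}$, estimated by Young's inequality by $\|\hBp_i\|_{L^1(\R^2)}\|\hBp_j\|_{L^2(\R^2)}$. The second factor is trivially bounded by $\|\hBp_j\|_{L^2_{s_B}}$, while Cauchy-Schwarz and the hypothesis $s_B\geq 2>n/2=1$ yield the embedding
\begin{equation*}
\|\hBp_i\|_{L^1(\R^2)}\leq\|(1+|\cdot|)^{-s_B}\|_{L^2(\R^2)}\|\hBp_i\|_{L^2_{s_B}(\R^2)},
\end{equation*}
which closes the pair-by-pair estimate.

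The main technical care will be with the bookkeeping of supports and quasiperiodic extensions: one has to verify that the integration over one shifted Brillouin zone captures exactly one copy of the Bloch-convolution without double counting, and that after rescaling the resulting $L^2$-integral over $\varepsilon^{-1}\B$ may be extended to all of $\R^2$ at no cost. Both points hinge on the compactness of the support of $|\hBp_i|\ast|\hBp_j|$ inside $\varepsilon^{-1}\B$ for $\varepsilon$ sufficiently small, which follows from $r<1$.
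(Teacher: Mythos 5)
Your proof is correct, and it takes a genuinely different route from the paper's. The paper transfers the problem to physical variables using the isomorphism property of the Bloch transform, namely $\|\tv_{B,a}\astB\tv_{B,b}\|_{\cX_2}\leq c\,\|v_{B,a}v_{B,b}\|_{H^2(\R^2)}$; it then separates the Bloch modes using the $W^{2,\infty}$ bound (Lemma \ref{Lemma_Reg_supW^2infty}) to reduce to the envelope products $\Bp_i(\eps\cdot)\Bp_j(\eps\cdot)$, and extracts the $\varepsilon$-gain by a low-/high-frequency split in the ordinary Fourier variable, ending with $L^2_2$-norms (whence the hypothesis $s_B\geq2$). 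You instead stay entirely in Bloch/Fourier variables: after rescaling the convolution you estimate $\|(\tv_{B_i,a}\astB\tv_{B_j,b})(\cdot,k)\|_{H^2(\cQ)}$ pointwise in $k$ using the algebra property of $H^2(\cQ)$ and the weaker $H^2(\cQ)$ bound of Lemma \ref{Lemma_Reg_supH^2}, identify the $\B$-constrained convolution with an $\R^2$-convolution of $\hBp_i$ and $\hBp_j$ on the relevant supports, and finish with Young's inequality plus the embedding $L^2_{s_B}\hookrightarrow L^1$ for $s_B>1$; the $\eps$ then comes from the Jacobian of the outer change of variable. Your route avoids the round trip through physical space, requires only the $H^2(\cQ)$ (not $W^{2,\infty}$) regularity of the Bloch mode, and in fact needs only $s_B>1$ rather than $s_B\geq2$; the trade-off is that, unlike the paper's physical-space argument, it leans on the support condition \eqref{E:BjC-supp} (always in force where the lemma is invoked). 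The one step that deserves an extra sentence is the claim that only the $K=0$ copy of the $\eps^{-1}\Lambda^*$-periodic $\hB_i$ contributes: argue that on the integration domain the argument of $\hB_i$ lies in $\varepsilon^{-1}\B + B_{\varepsilon^{r-1}}(0)$, and for $\varepsilon$ small this set meets $B_{\varepsilon^{r-1}}(\varepsilon^{-1}K)$ only for $K=0$ because no nonzero point of $\Lambda^*$ lies in the closed Wigner--Seitz cell $\overline{\B}$.
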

\begin{proof} For $a\in\{1,2,3\}$ we define $v_{B,a}:=\cT^{-1}\tv_{B,a}$. First,
	\begin{equation}\label{Lemma_first}
	\|\tv_{B,a}\astB\tv_{B,b}\|^2_{\cX_2}=\|\widetilde{v_{B,a}v_{B,b}}\|^2_{\cX_2}\leq c\|v_{B,a}v_{B,b}\|^2_{H^2(\R^2)},
	\end{equation}
	by the isomorphism \eqref{Bloch_isomorphismum}. We now claim that
	\begin{equation}\label{Lemma_claim}
	\|v_{B,a}v_{B,b}\|^2_{H^2(\R^2)}\leq c\varepsilon^4\sum_{i,j=1}^N\|\Bp_i(\varepsilon\cdot)\Bp_j(\varepsilon\cdot)\|_{H^2(\R^2)}^2,
	\end{equation}
	where $\Bp_i:=(\hBp_i)^\vee$ for $i\in\{1,\dots,N\}$. Indeed, for $i,j\in\{1,\dots,N\}$ one has
	\begin{equation*}
	\begin{split}
	\|v_{B_i,a}v_{B_j,b}\|^2_{L^2(\R^2)}&\leq c\varepsilon^4\esssup_{k\in\B}\|p_{n_*}(\cdot,k)\|^2_\infty\int_{\R^2}\prod\limits_{l=i,j}\bigg|\int_{\varepsilon^{-1}\B}\hB_l(\kappa)e^{\ri\eps \kappa\cdot x}\dd \kappa\bigg|^2\dd x\\
	&\leq c\varepsilon^4\|\Bp_i(\varepsilon\cdot)\Bp_j(\varepsilon\cdot)\|_{L^2(\R^2)}^2,
	\end{split}
	\end{equation*}
	\begin{equation*}	
	\begin{split}
	\|\nabla\left(v_{B_i,a}v_{B_j,b}\right)\|^2_{L^2(\R^2)}&\leq c\varepsilon^4\esssup_{k\in\B}\|p_{n_*}(\cdot,k)\|^2_{W^{1,\infty}}\int_{\R^2}\prod\limits_{l=i,j}\bigg|\int_{\varepsilon^{-1}\B}(1+\ri \varepsilon\kappa)\hB_l(\kappa)e^{\ri\eps \kappa\cdot x}\dd \kappa\bigg|^2\dd x\\
	&\leq\varepsilon^4\|\Bp_i(\varepsilon\cdot)\Bp_j(\varepsilon\cdot)\|_{H^1(\R^2)}^2
	\end{split}
	\end{equation*}
	and analogously one infers 
	\begin{equation*}
	\|\nabla^2\left(v_{B_i,a}v_{B_j,b}\right)\|^2_{L^2(\R^2)}\leq\varepsilon^4\|\Bp_i(\varepsilon\cdot)\Bp_j(\varepsilon\cdot)\|_{H^2(\R^2)}^2.
	\end{equation*}
	Notice that here we used the regularity estimate for the eigenfunctions given by Lemma \ref{Lemma_Reg_supW^2infty}.
	
	Therefore, combining \eqref{Lemma_first} and \eqref{Lemma_claim} one finds
	\begin{equation*}
	\begin{split}
		\|\tv_{B,a}\astB\tv_{B,b}\|^2_{\cX_2}&\leq c\varepsilon^4\sum_{i,j=1}^N\|\Bp_i(\varepsilon\cdot)\Bp_j(\varepsilon\cdot)\|_{H^2(\R^2)}^2\\
		&=c\varepsilon^4\sum_{i,j=1}^N\int_{\R^2}(1+|k|^2)^2\left|\left(\Bp_i(\varepsilon\cdot)\Bp_j(\varepsilon\cdot)\right)^\land(k)\right|^2\dd k\\
		&\leq c\varepsilon^4\sum_{i,j=1}^N\int_{\R^2}\left|\left(\Bp_i(\varepsilon\cdot)\Bp_j(\varepsilon\cdot)\right)^\land(k)\right|^2\dd k+
		c\varepsilon^4\sum_{i,j=1}^N\int_{\R^2}|k|^4\left|\left(\Bp_i(\varepsilon\cdot)\Bp_j(\varepsilon\cdot)\right)^\land(k)\right|^2\dd k\\
		&= c\varepsilon^4\sum_{i,j=1}^N\|\Bp_i(\varepsilon\cdot)\Bp_j(\varepsilon\cdot)\|_{L^2(\R^2)}^2+c\varepsilon^4\sum_{i,j=1}^N\int_{\R^2}\left|\eps^{-1}k\right|^4\left|\left(\Bp_i\Bp_j\right)^\land\left(\eps^{-1}k\right)\right|^2\dd k\\
		&\leq c\varepsilon^2\sum_{i,j=1}^N\|\Bp_i\|_{L^\infty}^2\|\Bp_j\|^2_{L^2(\R^2)}+c\varepsilon^6\sum_{i,j=1}^N\|\left(\Bp_i\Bp_j\right)^\land\|_{L^2_2(\R^2)}^2\\
		&\leq c\varepsilon^2\sum_{i,j=1}^N\|\Bp_i\|^2_{H^2(\R^2)}\|\Bp_j\|^2_{H^2(\R^2)}\\
		&\leq c\varepsilon^2\sum_{i,j=1}^N\|\hBp_i\|^2_{L^2_2(\R^2)}\|\hBp_j\|^2_{L^2_2(\R^2)}.
	\end{split}
	\end{equation*}
	Notice that we have used the embedding $H^2(\R^2)\hookrightarrow L^\infty(\R^2)$ and, to conclude, the isomorphism property of the Fourier transform between $H^2(\R^2)$ and $L^2_2(\R^2)$.
\end{proof}
For the analysis of the nonlinearity we need to calculate the double convolutions $\tv_a*_\B \tv_b *_\B \tbv_c$ (for $a,b,c\in\{1,2,3\}$) appearing in $\tF(\tv)$. We have
\beq\label{E:double-convol}
\begin{aligned}
(\tv_a*_\B \tv_b *_\B \tbv_c)(x,k) &= \int_\B\bigg(\int_\B \tv_a(k-s-t,x)\tv_b(t,x)\dd t\bigg)\tbv_c(s,x)\dd s\\
&=\int_{2\B}\int_\B \tv_a(k-l,x)\tv_b(l-s,x)\tbv_c(s,x)\dd s\dd l
\end{aligned}
\eeq
using the transformation $l=s+t$. Also note that the integration domains $2\B$ and $\B$ can be both shifted by an arbitrary $k_*\in \R^2$ due to the quasi-periodicity of $\tv_a,\tv_b$, and $\tv_c$ with respect to the variable $k$.

\begin{lem}\label{Lemma2} 
Let $\hB_j\in L^2_{s_B}(\eps^{-1}\B)$ with $s_B>1$ and $\hC\in L^2(\B)$ have the supports as in \eqref{E:BjC-supp}. Then 
\beq
\begin{aligned}\label{E:Fest-2}
	\|\tF(\tv)\|_{\cX_2}\leq\,\, &c\varepsilon^2\left(\sumabgN\|\hBp_\alpha\|_{L^2_{s_B}}\|\hBp_\beta\|_{L^2_{s_B}}\|\hBp_\gamma\|_{L^2_{s_B}}+\sum_{\alpha,\beta=1}^N\|\hBp_\alpha\|_{L^2_{s_B}}\|\hBp_\beta\|_{L^2_{s_B}}\|\hCp\|_{L^2}\right) \\
	&+ c\eps \sum_{\alpha=1}^N \|\hBp_\alpha\|_{L^2_{s_B}}\|\hCp\|_{L^2}^2 + c\,\|\hCp\|_{L^2}^3.
\end{aligned}
\eeq
For $\tF(\tv_B)$ we have for each $d=1,2,3$
\beq\label{E:FvB}
\begin{aligned}
	\tF_d(\tv_B)(x,k)=&\,\, \eps\sum_{a,b,c=1}^3\underline{\chi}_{a,b,c,d}^{(3)}(x)\sumabgN \int_{2\eps^{-1}\B}\int_{\eps^{-1}\B} \hB_\alpha\left(\frac{k-(k^{(\alpha)}+k^{(\beta)}-k^{(\gamma)})}{\eps}-l\right)\\
	&\cdot \hB_\beta(l-t)\hbB_\gamma(t)p_{n_*,a}(x,k-k^{(\beta)}+k^{(\gamma)}-\eps l)p_{n_*,b}(x,k^{(\beta)}+\eps (l-t))\\
	&\cdot p_{n_*,c}(x,-k^{(\gamma)}+\eps t)\dd t \dd l
\end{aligned}
\eeq
and the estimate
\beq\label{E:FvB-est}
	\|\tF(\tv_B)\|_{\cX_2}\leq c\varepsilon^2\sumabgN\|\hBp_\alpha\|_{L^2_{s_B}}\|\hBp_\beta\|_{L^2_{s_B}}\|\hBp_\gamma\|_{L^2_{s_B}}.
\eeq
\end{lem}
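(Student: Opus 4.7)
The plan is to reduce all estimates to physical-space $H^2(\R^2)$-bounds on triple products $v_{X,a} v_{Y,b} \overline{v_{Z,c}}$ for $X, Y, Z \in \{B, C\}$ via the Bloch isomorphism $\cT : H^2(\R^2, \C^3) \to \cX_2$, and to derive \eqref{E:FvB} by direct substitution with a rescaling of the integration variables in \eqref{E:double-convol}. Here $v_B := \cT^{-1}\tvB$ and $v_C := \cT^{-1}\tvC$ are the inverse Bloch transforms.

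The crucial preliminary is a uniform smallness bound in $W^{2,\infty}(\R^2)$ for $v_B$. Unscaling $\kappa = (k - k^{(\alpha)})/\eps$ gives the representation
\beqq
v_{B,a}(x) = \eps \sum_{\alpha=1}^N e^{\ri k^{(\alpha)} \cdot x} \int_{\R^2} \hBp_\alpha(\kappa) \, p_{n_*,a}(x, k^{(\alpha)} + \eps \kappa) \, e^{\ri \eps \kappa \cdot x} \, \dd \kappa,
\eeqq
which, combined with the uniform $W^{2,\infty}(\cQ)$-bound on $p_{n_*}(\cdot, k)$ from Lemma \ref{Lemma_Reg_supW^2infty} and the embedding $L^2_{s_B}(\R^2) \hookrightarrow L^1(\R^2)$ valid for $s_B > n/2 = 1$, yields $\|v_B\|_{W^{2,\infty}(\R^2)} \leq c \eps \sum_{\alpha=1}^N \|\hBp_\alpha\|_{L^2_{s_B}}$. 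The higher $|\kappa|$-moments coming from $\partial_x$ acting on the oscillatory phase $e^{\ri \eps \kappa \cdot x}$ are controlled by the compact support of $\hBp_\alpha$ in a ball of radius $\eps^{r-1}$ and absorbed by the outer $\eps$-factor. In parallel, Lemma \ref{Lemma1} and the Bloch isomorphism give $\|v_B\|_{H^2(\R^2)} \leq c \sum_\alpha \|\hBp_\alpha\|_{L^2_{s_B}}$ and $\|v_C\|_{H^2(\R^2)} \leq c \|\hCp\|_{L^2(\B)}$, whence by the 2D Sobolev embedding $\|v_C\|_{L^\infty(\R^2)} \leq c \|\hCp\|_{L^2(\B)}$.

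Expanding $\tF(\tv) = \tF(\tvB + \tvC)$ by trilinearity of $\astB$ and $\R$-linearity of complex conjugation yields eight terms $\tv_{X,a} \astB \tv_{Y,b} \astB \tbv_{Z,c}$ with $X, Y, Z \in \{B, C\}$. By \eqref{convolution} each equals $\cT(v_{X,a} v_{Y,b} \overline{v_{Z,c}})$, so the isomorphism $\cT$ reduces its $\cX_2$-norm to the $H^2(\R^2)$-norm of the pointwise product. Leibniz' rule and the distribution of $\|\cdot\|_{W^{2,\infty}}$ on $B$-factors and $\|\cdot\|_{H^2}$ on one remaining factor (with the 2D $H^2$-algebra used for a product of two $C$-factors) yield bounds of order $\eps^2 \|\hBp\|^3$, $\eps^2 \|\hBp\|^2 \|\hCp\|$, $\eps \|\hBp\| \|\hCp\|^2$, and $\|\hCp\|^3$ for the $BBB$, $BBC$, $BCC$, $CCC$ groups respectively. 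Since $\underline\chi^{(3)}_{a,b,c,d}$ is $\Lambda$-periodic and bounded by (A1)/(A5), multiplication by it is bounded on $H^2$; summing over the finitely many index choices produces \eqref{E:Fest-2}. For the explicit formula \eqref{E:FvB}, insert $\tvBj(x, k) = \eps^{-1} \hB_j((k - k^{(j)})/\eps) p_{n_*}(x, k)$ into \eqref{E:double-convol}. The third slot is rewritten via the symmetry \eqref{E:sym-minus-k} as
\beqq
\tbv_{B,c}(x, s) = \overline{\tv_{B,c}(x, -s)} = \sum_{\gamma=1}^N \eps^{-1} \hbB_\gamma\bigl((s + k^{(\gamma)})/\eps\bigr) p_{n_*,c}(x, s), \quad \hbB_\gamma(\tau) := \overline{\hB_\gamma(-\tau)},
\eeqq
so that the $\gamma$-th contribution concentrates at $s = -k^{(\gamma)}$. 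The changes of variables $t := (s + k^{(\gamma)})/\eps$ and $l := (l' - k^{(\beta)} + k^{(\gamma)})/\eps$ (with $l'$ the original $2\B$-variable) produce the arguments of the three $\hB$- and three $p_{n_*}$-factors exactly as displayed in \eqref{E:FvB}; the Jacobian $\eps^4$ combined with the three $\eps^{-1}$ factors yields the $\eps$ prefactor, and by quasi-periodicity the integration domains become $2\eps^{-1}\B$ and $\eps^{-1}\B$. Estimate \eqref{E:FvB-est} then follows as the $BBB$-case of \eqref{E:Fest-2} upon setting $\hCp \equiv 0$.

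The main obstacle is the $W^{2,\infty}$-smallness of $v_B$ at the low regularity $s_B > 1$: the embedding $L^2_{s_B} \hookrightarrow L^1$ controls only $\|\hBp_\alpha\|_{L^1}$, not its higher moments. The resolution uses the compact $\kappa$-support of $\hBp_\alpha$ in a ball of radius $\eps^{r-1}$, so each missing $L^1$-moment costs only a factor $\eps^{r-1}$ which is absorbed by the additional $\eps$ produced by each $\partial_x$ acting on $e^{\ri \eps \kappa \cdot x}$; this is precisely how the scaling $O(\eps^2)$ on the $BBC$ terms is obtained, a gain not directly visible from iterating Lemma \ref{LemmaDU} together with the algebra property of $\cX_2$. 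A secondary bookkeeping step is the correct identification via \eqref{E:sym-minus-k} of the third-slot concentration point $-k^{(\gamma)}$, from which the resonance condition $k^{(\alpha)} + k^{(\beta)} - k^{(\gamma)}$ in the $\hB_\alpha$-argument naturally emerges.
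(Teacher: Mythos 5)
Your proof is correct but follows a genuinely different route from the paper. The paper stays entirely in Fourier/Bloch variables: it bounds $\|\tF(\tv)\|_{\cX_2}$ by explicit triple convolution integrals against $\underline{\chi}^{(3)}$ and $p_{n_*}$, rescales $t'=\varepsilon^{-1}(t+k^{(\gamma)})$ etc., introduces the periodic restrictions $\hB_j^{(m)}:=\chi_{m\varepsilon^{-1}\B}\hB_j$ and $\hC^{(m)}$, and then uses Young's convolution inequality together with $\|\hB_j^*\|_{L^1}\leq c\|\hB_j^*\|_{L^2_{s_B}}$, extracting each $\varepsilon$-power from the Jacobians of the rescalings. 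You instead transport everything to physical space through the Bloch isomorphism, establish a uniform smallness estimate $\|v_B\|_{W^{2,\infty}(\R^2)}\leq c\,\varepsilon\sum_\alpha\|\hBp_\alpha\|_{L^2_{s_B}}$ (which does \emph{not} follow from $L^2_{s_B}\hookrightarrow L^1$ alone for the first and second derivatives — as you correctly point out, the $|\kappa|$-moments need the $B_{\varepsilon^{r-1}}(0)$-support of $\hBp_\alpha$), and then distribute Leibniz derivatives with $W^{2,\infty}$ on $B$-factors and $H^2$ (via the 2D algebra property) on $C$-factors. Each approach has a cost and a benefit: yours reveals the smallness mechanism transparently as sup-norm smallness of the modulated carrier wave, and replaces case-by-case convolution bookkeeping by a single multiplication estimate; the paper's never needs the compact $\kappa$-support (it is harmless here, but the support argument ties the proof to the specific ansatz in a way the Fourier-side Young argument does not), and it keeps all the $\varepsilon$-powers visible without shuttling between $\cX_2$ and $H^2(\R^2)$. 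Your derivation of \eqref{E:FvB} via \eqref{E:double-convol}, the symmetry \eqref{E:sym-minus-k}, and the rescalings $t'=\varepsilon^{-1}(s+k^{(\gamma)})$, $l'=\varepsilon^{-1}(l-k^{(\beta)}+k^{(\gamma)})$ is essentially the paper's. One small point worth making explicit in your write-up: "$\underline{\chi}^{(3)}$ bounded $\Rightarrow$ multiplication bounded on $H^2$" is a bit terse — $H^2_{\rm loc}\cap L^\infty$-multiplicativity on $H^2(\R^2)$ is not for free; you should invoke (as the paper implicitly does via $\|\chi^{(3)}\|_{H^2(\cQ)}$) the $H^2$-algebra property on a bounded cell together with periodicity and a finite-overlap partition of unity.
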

\begin{proof}
	First, notice that
	\begin{equation*}
	\tbv_B(x,k)=\frac1\varepsilon\sumjN\int_\B\hbB_j\left(\frac{k+\kj}\varepsilon\right)p_{n_*}(x,k)e^{\ri k\cdot x}\dd k.
	\end{equation*}
	Indeed,
	\begin{equation*}
	\begin{split}
	\tbv_B(x,k)&=\frac1\varepsilon\sumjN\int_\B\overline{\hB_j}\left(\frac{k-\kj}\varepsilon\right)\overline{p_{n_*}(x,k)}e^{-\ri k\cdot x}\dd k=\frac1\varepsilon\sumjN\int_\B\hbB_j\left(\frac{-k+\kj}\varepsilon\right)\overline{p_{n_*}(x,k)}e^{-\ri k\cdot x}\dd k\\
	&=\frac1\varepsilon\sumjN\int_\B\hbB_j\left(\frac{k+\kj}\varepsilon\right)\overline{p_{n_*}(x,-k)}e^{\ri k\cdot x}\dd k
	\end{split}
	\end{equation*}
	and finally we use the symmetry $\overline{p_{n_*}(x,-k)}=p_{n_*}(x,k)$, see \eqref{E:sym-minus-k}. 
	
Similarly,
	$$\tbv_C(x,k)=\hbC(k)p_{n_*}(x,k).$$
We start with formula \eqref{E:FvB}. Using \eqref{E:double-convol}, we have
$$
\begin{aligned}
	\tF_d(\tv_B)(x,k)=&\,\eps\sum_{a,b,c=1}^3\underline{\chi}^{(3)}_{a,b,c,d}(x)\sumabgN \int_{2\B+k^{(\beta)}-k^{(\gamma)}}\int_{\B-k^{(\gamma)}} \hB_\alpha\left(\frac{k-l-k^{(\alpha)}}{\eps}\right)\\
	&\cdot\hB_\beta\left(\frac{l-t-k^{(\beta)}}{\eps}\right)\hbB_\gamma\left(\frac{t+k^{(\gamma)}}{\eps}\right)p_{n_*,a}(x,k-l)p_{n_*,b}(x,l-t)p_{n_*,c}(x,t)\dd t \dd l,
\end{aligned}
$$
where we have used \eqref{E:double-convol} and the fact that due to the quasi-periodicity in $k$ the convolution domains can be shifted by arbitrary $k_*\in \R^2$. The transformations $t'=\eps^{-1}(t+k^{(\gamma)}),$ $l'=\eps^{-1}(l-k^{(\beta)}+k^{(\gamma)})$ produce \eqref{E:FvB}.

The estimates \eqref{E:Fest-2} and \eqref{E:FvB-est} are proved next. Let us denote $\|\chi^{(3)}\|_{H^2(\cQ)}:=\!\displaystyle{\max_{a,b,c,d\in\{1,2,3\}}}\|\chi^{(3)}_{a,b,c,d}\|_{H^2(\cQ)}$. Using the assumption on $\chi^{(3)}$ in (A5),
$$
\begin{aligned}
	\|\tF(\tv)\|_{\cX_2}&\leq c\,\|\chi^{(3)}\|_{H^2(\cQ)}\max_{a\in \{1,2,3\}} \|p_{n_*,a}\|_{L^\infty(\B,H^2(\cQ))}^3\\
	&
	\quad\cdot\left[ \eps^{-6} \sumabgN \int_\B\left|\int_{2\B}\int_{\B}\hB_\alpha\left(\frac{k-l-k^{(\alpha)}}{\eps}\right)\hB_\beta\left(\frac{l-t-k^{(\beta)}}{\eps}\right)\hbB_\gamma\left(\frac{t+k^{(\gamma)}}{\eps}\right)  \dd t \dd l\right|^2\dd k\right. \\
	&\quad\quad\left. +\eps^{-4} \sum_{\alpha,\beta=1}^N \int_\B\left|\int_{2\B}\int_{\B}\hB_\alpha\left(\frac{k-l-k^{(\alpha)}}{\eps}\right)\hB_\beta\left(\frac{l-t-k^{(\beta)}}{\eps}\right)\hbC(t)\dd t \dd l\right|^2\dd k \right.\\
	& \quad\quad\left. + \eps^{-2}\sum_{\alpha=1}^N \int_\B\left|\int_{2\B}\int_{\B}\hB_\alpha\left(\frac{k-l-k^{(\alpha)}}{\eps}\right)\hC(l-t)\hbC(t)\dd t \dd l\right|^2\dd k \right.\\
	& \quad\quad\left. + \int_\B\left|\int_{2\B}\int_{\B}\hC(k-l)\hC(l-t)\hbC(t)\dd t \dd l\right|^2\dd k \right]^\frac12\\
	& \leq  c\left[\eps^4\sumabgN \|\hB_\alpha *_{\eps^{-1}\B}\hB_\beta *_{\eps^{-1}\B} \hbB_\gamma\|_{L^2(\eps^{-1}\B)}^2 \right.\\
	& \quad\quad\left. + \eps^4 \sum_{\alpha,\beta=1}^N \int_\B\left|\int_{2\eps^{-1}\B}\int_{\eps^{-1}\B}\hB_\alpha\left(\frac{k-k^{(\alpha)}-k^{(\beta)}}{\eps}-l'\right)\hB_\beta\left(l'-t'\right)\hbC(\eps t')\dd t' \dd l'\right|^2\dd k \right. \\
	& \quad\quad\left. + \eps^{6}\sum_{\alpha=1}^N \int_\B\left|\int_{2\eps^{-1}\B}\int_{\eps^{-1}\B}\hB_\alpha\left(\frac{k-k^{(\alpha)}}{\eps}-l'\right)\hC(\eps(l'-t'))\hbC(\eps t')\dd t' \dd l'\right|^2\dd k \right.\\
	& \quad\quad\left. + \int_\B\left|\int_{2\B}\int_{\B}\hC(k-l)\hC(l-t)\hbC(t)\dd t \dd l\right|^2\dd k \right]^\frac12,
\end{aligned}
$$
	where  we have used again transformations of the type $t'=\eps^{-1}(t+k^{(\gamma)})$ and the fact that the convolution domains can be shifted by arbitrary $k_*\in \R^2$.
	
Next, for $j\in\{1,\dots,N\}$ and $m\in\N$ we introduce notation for $\hB$ and $\hC$ restricted to $m^2$ periodicity cells. In detail, let
	$$\hB^{(m)}_j:=\chi_{m\varepsilon^{-1}\B}\hB_j, \quad \hC^{(m)}:=\chi_{m\B}\hC.$$
	Note that $\hB^{(1)}_j=\hBp_j$ and $\hC^{(1)}=\hCp$. With this notation we have
	\begin{equation*}
	\begin{aligned}
	&\|\tF(\tv)\|_{\cX_2}\leq c \left[\eps^4\sumabgN \|\hB_\alpha^{(3)} *\hB_\beta^{(2)} *\hbB_\gamma^*\|_{L^2(\R^2)}^2  + \eps^6 \sum_{\alpha,\beta=1}^N \|\hB_\alpha^{(3)} *\hB_\beta^{(2)} *\hbC^*(\eps \cdot) \|_{L^2(\R^2)}^2\right. \\
	& \ \left. + \eps^{8}\sum_{\alpha=1}^N \|\hB_\alpha^{(3)} *\hC^{(2)}(\eps \cdot) * \hbC^*(\eps \cdot)\|_{L^2(\R^2)}^2 +  \|\hC^{(3)} *\hC^{(2)}* \hbC^* \|_{L^2(\R^2)}^2 \right]^\frac12,
	\end{aligned}
	\end{equation*}
	where $*$ denotes the convolution over the full $\R^2$.
	Using Young's inequality for convolutions and the fact that (due to the periodicity) $\|\hB_j^{(m)}\|_{L^p(\R^2)}\leq c\,\|\hB_j^*\|_{L^p(\R^2)}, \|\hC^{(m)}\|_{L^p(\R^2)}\leq c\,\|\hC^*\|_{L^p(\B)}$ for all $m\in \N$, $p\geq 1$, and $j=1,\dots,N$, we estimate
	\begin{equation*}
	\begin{aligned}
	&\|\hB_\alpha^{(3)} *\hB_\beta^{(2)} *\hbB_\gamma^*\|_{L^2(\R^2)} \leq c\,\|\hB_\alpha^*\|_{L^2(\R^2)}\|\hB_\beta^*\|_{L^1(\R^2)}\|\hB_\gamma^*\|_{L^1(\R^2)},\\
	& \|\hB_\alpha^{(3)} *\hB_\beta^{(2)} *\hbC^*(\eps \cdot)\|_{L^2} \leq c\,\|\hB_\alpha^*\|_{L^1(\R^2)}\|\hB_\beta^*\|_{L^1(\R^2)}\|\hC^*(\eps \cdot)\|_{L^2(\R^2)},\\
	&\|\hB_\alpha^{(3)} *\hC^{(2)}(\eps \cdot) * \hbC^*(\eps \cdot)\|_{L^2(\R^2)} \leq c\,\|\hB_\alpha^*\|_{L^1(\R^2)}\|\hC^*(\eps \cdot)\|_{L^1(\R^2)}\|\hC^*(\eps \cdot)\|_{L^2(\R^2)},\\
	&\|\hC^{(3)} *\hC^{(2)} * \hbC^*\|_{L^2(\R^2)} \leq c\,\|\hC^*\|_{L^2(\B)}\|\hC^*\|_{L^1(\B)}^2.
	\end{aligned}
	\end{equation*}
	Finally, we arrive at \eqref{E:Fest-2} by using $\|\hbC^*\|_{L^1(\B)}\leq c\,\|\hbC^*\|_{L^2(\B)},$
	$$\|\hC^*(\eps \cdot)\|_{L^2(\R^2)}=\eps^{-1}\|\hC\|_{L^2(\B)}, \quad \|\hC^*(\eps \cdot)\|_{L^1(\R^2)}=\eps^{-2}\|\hC\|_{L^1(\B)},
	$$
and
	\beq\label{E:L1-L2s}
	\|\hB_j^*\|_{L^1(\R^2)} \leq c\,\|\hB_j^*\|_{L^2_{s_B}(\R^2)} \quad\text{if} \; s_B>1.
	\eeq
	The last inequality follows from 
	\begin{equation*}
	\begin{split}
	\|\hBp_j\|_{L^1(\R^2)}&=\int_{\R^2}(1+|t|)^{-s_B}(1+|t|)^{s_B}|\hBp_j(t)|\dd t\\
	&\leq\bigg(\int_{\R^2}(1+|t|)^{-2s_B}\dd t\bigg)^{\frac12}\bigg(\int_{\R^2}(1+|t|)^{2s_B}|\hBp_j(t)|^2\dd t\bigg)^{\frac12},
	\end{split}
	\end{equation*}
	where the first factor is bounded provided $s_B>1$.
	
Estimate \eqref{E:FvB-est} is clearly the first part of \eqref{E:Fest-2}.
\end{proof}

For the whole analysis of the components $\tw_0$ and $\tw_R$ we assume that for all sufficiently small $\varepsilon>0$ and some $s_B> 1$ we have
\begin{equation}\label{BjC_smallness}
\|\hCp\|_{L^2}=\|\hC\|_{L^2(\B)}\leq c_0\eps^{2-2r}\quad\!\mbox{and}\quad \|\hBp_j\|_{L^2_{s_B}}=\|\hB_j\|_{L^2_{s_B}(\varepsilon^{-1}\B)}\leq c\quad \mbox{for all}\,\,j\in\{1,\dots,N\}
\end{equation}
for some constants $c_0,c>0$. The regularity parameter $s_B$ is chosen below in order for the required estimates to work. Notice that under such an assumption the terms involving $\hC$ in the estimate \eqref{E:Fest-2} are $o(\varepsilon^2)$, provided $r\in\left(0,\tfrac12\right]$.
\subsection{Component \texorpdfstring{$\boldsymbol{\tw_0}$}{w0tilde}}\label{Section_w0}

Under assumption \eqref{BjC_smallness} with $s_B\geq 2$ we solve the (linear) equation \eqref{w0_eq} for $\tw_0$ and derive an $\cX_2$-estimate on $\tw_0$.

\begin{lem}
	Let $s_B\geq 2$ and assume \eqref{BjC_smallness}, where $\hB_j$ and $\hC$ have the supports as in \eqref{E:BjC-supp} with $r\in (0,\tfrac{2}{3}]$. Then \eqref{w0_eq} has a unique solution $\tw_0\in\cX_2$ with
	\begin{equation}\label{w0_estimate}
	\|\tw_0\|_{\cX_2}\leq c_1\|\tF(\tv)\|_{\cX_2}\leq c_2\varepsilon^2,
	\end{equation}
	where $c_1,c_2>0$ and $c_2$ depends polynomially on $\|\hBp_j\|_{L^2_{s_B}}$, $\|\hCp\|_{L^2}$ for all $j\in\{1,\dots,N\}$.
\end{lem}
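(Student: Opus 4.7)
The plan is to invert $\cL_k=\eQk L_k\Qke$ fibrewise via Lemma \ref{invertibility}, upgrade the resulting $H(\text{curl}^2)$ estimate to $H^2$ in a way that is uniform in $\varepsilon$ and $k$, integrate over $\B$, and finally plug in the trilinear estimate \eqref{E:Fest-2}.

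\textbf{Step 1 (fibrewise inversion).} For each $k\in\B$ the right-hand side $\omega^{2}\eQk\tF(\tv)(\cdot,k)$ belongs to $\eQk L^{2}_\#(\cQ)$, so Lemma \ref{invertibility} produces a unique $\tw_0(\cdot,k)\in\Qke H_\#(\text{curl}^2,\cQ)$ with
\[
\|\tw_0(\cdot,k)\|_{H(\text{curl}^2)}\le C_\cL\,\omega^{2}\,\|\eQk\tF(\tv)(\cdot,k)\|_{L^{2}(\cQ)},
\]
the constant $C_\cL$ being independent of $\varepsilon$, $\Omega$ and $k$. Measurability in $k$ is inherited from the measurability of the data and of the resolvent family.

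\textbf{Step 2 (uniform $H^{2}$-regularity in the fibre).} I first remove the projection on the left: since $\Pke\tw_0=0$ and, by Lemma \ref{Lemma_proj_orthog}, $\langle\eQk\tF(\tv),p_{n_*}(\cdot,k)\rangle=0$, the Lagrange multiplier in $L_k\tw_0-\omega^{2}\eQk\tF(\tv)\in\mathrm{span}\{\epsilon p_{n_*}\}$ is forced to be zero, so
\[
L_k\tw_0=\omega^{2}\eQk\tF(\tv)\qquad\text{in }L^{2}_\#(\cQ).
\]
Now I rerun the regularity argument of the lemma leading to \eqref{E:u-H2est}: using $\|\tw_0\|_{H^{2}(\cQ)}\le c(\|\tw_0\|_{L^{2}}+\|\nabla'_k\times\tw_0\|_{H^{1}}+\|\nabla'_k\cdot\tw_0\|_{H^{1}})$, one controls the curl via the equation itself and the divergence via taking $\nabla'_k\cdot$ of the equation (which is where $\epsilon\in W^{2,\infty}$, $\epsilon^{-1}\in L^\infty$ from (A5) and $|\omega|$ bounded below enter). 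The key point is that, unlike in the proof of \eqref{E:u-H2est}, the $H(\text{curl}^2)$ control of $\tw_0$ is already $\varepsilon$-uniform by Step 1, so the upgrade yields
\[
\|\tw_0(\cdot,k)\|_{H^{2}(\cQ)}\le c\,\|\eQk\tF(\tv)(\cdot,k)\|_{H^{2}(\cQ)}
\]
with $c$ independent of $\varepsilon$ and $k$.

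\textbf{Step 3 (assembly and application of Lemma \ref{Lemma2}).} Squaring and integrating over $\B$, and using boundedness of $\eQk$ on every fibre, gives $\|\tw_0\|_{\cX_2}\le c_1\|\tF(\tv)\|_{\cX_2}$. The second inequality of \eqref{w0_estimate} follows from \eqref{E:Fest-2} together with \eqref{BjC_smallness}: the cubic-in-$\hB$ term is of order $\eps^{2}$, while the remaining terms contribute $\eps^{2}\|\hBp\|^{2}\|\hCp\|\le c\eps^{4-2r}$, $\eps\|\hBp\|\|\hCp\|^{2}\le c\eps^{5-4r}$ and $\|\hCp\|^{3}\le c\eps^{6-6r}$. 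All of these are $O(\eps^{2})$ precisely under the hypothesis $r\in(0,\tfrac{2}{3}]$, with the constant $c_2$ depending polynomially on $\|\hBp_j\|_{L^{2}_{s_B}}$ and $\|\hCp\|_{L^{2}}$.

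The main obstacle is Step 2. Lemma \ref{invertibility} only provides an $L^{2}\!\to\!H(\text{curl}^2)$ bound on $\cL_k^{-1}$, while the pre-\eqref{E:u-H2est} lemma yields $H^{2}\!\to\!H^{2}$ estimates whose constants depend on $\dist(\omega^{2},\sigma(L_k^{(H)}))^{-1}$ and therefore blow up as $\varepsilon\to 0$. Bridging the two by recovering the unprojected equation $L_k\tw_0=\omega^{2}\eQk\tF(\tv)$ and then reapplying the Helmholtz-type regularity argument (which is independent of the spectral distance once the $H(\text{curl}^2)$ norm is already controlled) is the crucial technical point.
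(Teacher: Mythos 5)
Your proposal is correct and follows essentially the same route as the paper: fibrewise inversion via Lemma \ref{invertibility}, a curl--divergence regularity upgrade to $H^2$ that is uniform in $\varepsilon$ and $k$ (taking $\nabla_k'\cdot$ of the equation and using $\nabla_k'\cdot\nabla_k'\times=0$ together with (A5) and the uniform $H^2$ bounds on $p_{n_*}$), and then \eqref{E:Fest-2} with \eqref{BjC_smallness} and $r\in(0,\tfrac23]$ for the $\varepsilon^2$ bound. The only (harmless) deviation is that you note the multiplier $\langle L_k\tw_0,p_{n_*}(\cdot,k)\rangle$ vanishes, by symmetry of $L_k$ and $\Pke\tw_0=0$, and work with the unprojected equation, whereas the paper keeps this projection term and simply estimates it via the $H_\#(\mathrm{curl}^2)$ bound.
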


\begin{proof}
	By Lemma \ref{Lemma2} and assumption \eqref{BjC_smallness}, we get for the right-hand side of \eqref{w0_eq}
	\begin{equation*}
	\|\!\eQk\tF(\tv)\|_{\cX_2}\leq c\,\|\tF(\tv)\|_{\cX_2}\leq c'\varepsilon^2
	\end{equation*}
	if $5-4r\geq 2$ and $6-6r\geq 2$, which hold for $r\in (0,\tfrac{2}{3}]$. 
	Here $c'$ depends polynomially on $\|\hBp_j\|_{L^2_{s_B}}$, $j\in\{1,\dots,N\}$, and $\|\hCp\|_{L^2}$. Lemma \ref{invertibility} produces a solution $\tw_0(\cdot,k)\in\Qke H_\#(\text{curl}^2,\cQ)$ with
	\begin{equation}\label{consequence_Lemma_invertibility}
	\|\tw_0\|_{L^2(\B,H_\#(\text{curl}^2,\cQ))}\leq c\,\|\tF(\tv)\|_{L^2(\B,L^2(\cQ))}\leq c\,\|\tF(\tv)\|_{\cX_2}.
	\end{equation}
	For an $\cX_1$-estimate we need to control also the divergence $\nabla_k'\cdot\tw_0$. First note that
	$$\eQk\left(\epsilon\tw_0\right)(\cdot,k)=\epsilon\tw_0(\cdot,k)-\langle\epsilon\tw_0(\cdot,k),p_{n_*}(\cdot,k)\rangle\epsilon p_{n_*}(\cdot,k).$$
	Taking the divergence $\nabla_k'\cdot$ of equation \eqref{w0_eq} produces
	\begin{equation*}
	\nabla'_k\cdot\tw_0=\frac1\epsilon\bigg(-\nabla'_k\cdot\eQk\tF(\tv)-\left\langle\frac1{\omega^2}\nabla'_k\times\nabla'_k\times\tw_0(\cdot,k)-\epsilon\tw_0(\cdot,k),p_{n_*}(\cdot,k)\right\rangle\nabla'_k\cdot\left(\epsilon p_{n_*}(\cdot,k)\right)-\nabla'_k\epsilon\cdot\tw_0\bigg).
	\end{equation*}
	Due to (A5) and the regularity $p_{n_*}(\cdot,k)\in H^2(\cQ)$ for all $k\in\B$ given by Lemma \ref{Lemma_Reg_supH^2}, we have
	\begin{equation*}
	\nabla'_k\cdot\tw_0(\cdot,k)\in L^2(\cQ)\quad\mbox{such that}\quad\tw_0(\cdot,k)\in H^1(\cQ).
	\end{equation*}
This allows us to estimate the $\cX_1$-norm
\begin{equation*}
\begin{split}
\|\tw_0\|_{\cX_1}&\leq c\left(\|\tw_0\|_{L^2(\B,H_\#(\text{curl},\cQ))}+\|\nabla'_k\cdot\tw_0\|_{\cX_0}\right)\\
&\leq c\,\bigg(\|\tF(\tv)\|_{\cX_2}+\|\nabla'_k\cdot\eQk\tF(\tv)\|_{\cX_0}\\
& \qquad + \bigg\|\left\langle\frac1{\omega^2}\nabla'_k\times\nabla'_k\times\tw_0(\cdot,k)-\epsilon\tw_0(\cdot,k),p_{n_*}(\cdot,k)\right\rangle\bigg\|_{L^2(\B)}\!+ \|\tw_0\|_{\cX_0}\bigg)\\
&\leq c\,\big(\|\tF(\tv)\|_{\cX_2}+\|\nabla_k'\cdot\tF(\tv)\|_{\cX_0}+\|\tF(\tv)\|_{\cX_0}+\|\tw_0\|_{L^2(\B,H_\#(\text{curl}^2,\cQ))}+\|\tw_0\|_{\cX_0}\big)\\
&\leq c\,\|\tF(\tv)\|_{\cX_2},
 \end{split}
\end{equation*}
where in the second and the last step we used \eqref{consequence_Lemma_invertibility}. The $\cX_2$-estimate is analogous:
\begin{equation*}
\begin{split}
\|\tw_0\|_{\cX_2}&\leq c\left(\|\tw_0\|_{\cX_1}+\|\nabla'_k\times\tw_0\|_{\cX_1}+\|\nabla'_k\cdot\tw_0\|_{\cX_1}\right)\\
&\leq c\,\big(\|\tF(\tv)\|_{\cX_2}+\|\nabla_k'\times\tw_0\|_{L^2(\B,L^2(\cQ))}+\|\nabla_k'\times\nabla_k'\times\tw_0\|_{L^2(\B,L^2(\cQ))}+\|\nabla_k'\cdot\tw_0\|_{\cX_1}\big)\\
&\leq c\,\|\tF(\tv)\|_{\cX_2}.
\end{split}
\end{equation*}
Note that in the second inequality the divergence $\nabla_k'\cdot$ of $\nabla_k'\times\tw_0$ vanishes and in the last inequality the estimate of $\|\nabla_k'\cdot\tw_0\|_{\cX_1}$ is analogous to the $\cX_1$ estimate above. 
\end{proof}

\subsection{Component \texorpdfstring{$\boldsymbol{\tw_R}$}{wR~}}\label{Section_wR}
Next, we keep assumption \eqref{BjC_smallness} and solve equation \eqref{wR_eq} for $\tw_R$ via a Banach fixed point argument with $\tv$ satisfying \eqref{v_Eq} and $\tw_0$ as just obtained in Sec. \ref{Section_w0}. We show that for $r\in\left(0,\tfrac12\right]$ a solution of $O(\eps^3)$ (in the $\cX_2$-norm) exists.
We write
\begin{equation}\label{wR_eq_fixpt}
\tw_R=\left(\eQk L_k\Qke\right)^{-1}\omega^2\eQk\left(\tF(\tv+\tw_0+\tw_R)-\tF(\tv)\right)=:G(\tw_R)
\end{equation}
and we aim to show the contraction property of the map $G$ in the ball
\begin{equation*}
\cB^{\cX_2}_{K\varepsilon^\eta}:=\{f\in\cX_2\,|\,\|f\|_{\cX_2}<K\varepsilon^\eta\}
\end{equation*}
for suitable values of $K,\eta>0$. Applying the algebra property \eqref{algebra:X_2} of $\cX_2$, we get
\begin{equation*}
\begin{split}
\|G(\tw_R)\|_{\cX_2}&\leq c\,\big[\!\!\sum_{a,b\in\{1,2,3\}}\big(\|\tv_a\astB\tv_b\|_{\cX_2}+\|\tv_a\astB\tbv_b\|_{\cX_2}\big)(\|\tw_0\|_{\cX_2}+\|\tw_R\|_{\cX_2})+\|\tw_0\|^2_{\cX_2}(\|\tv\|_{\cX_2}+\|\tw_R\|_{\cX_2})\\
&\qquad+\|\tw_R\|^2_{\cX_2}(\|\tv\|_{\cX_2}+\|\tw_0\|_{\cX_2})+\|\tv\|_{\cX_2}\|\tw_0\|_{\cX_2}\|\tw_R\|_{\cX_2}+\|\tw_0\|^3_{\cX_2}+\|\tw_R\|^3_{\cX_2}\big].
\end{split}
\end{equation*}
Recalling Lemmas \ref{Lemma1}-\ref{LemmaDU} we first have for any $a,b=1,2,3$
\begin{equation*}
\begin{split}
\|\tv_a\astB\tv_b\|_{\cX_2}&\leq\|\tv_{B,a}\astB\tv_{B,b}\|_{\cX_2}+\|\tv_{B,a}\|_{\cX_2}\|\tv_{C,b}\|_{\cX_2}+\|\tv_{C,a}\|_{\cX_2}\|\tv_{B,b}\|_{\cX_2}+\|\tv_{C,a}\|_{\cX_2}\|\tv_{C,b}\|_{\cX_2}\\
&\leq c\varepsilon\sum_{i,j=1}^N\|\hBp_i\|_{L^2_{s_B}}\|\hBp_j\|_{L^2_{s_B}}+c\sum_{i=1}^N\|\hBp_i\|_{L^2_{s_B}}\|\hC\|_{L^2(\B)}+c\|\hC\|_{L^2(\B)}^2\\
&\leq c\,\left(\varepsilon+\varepsilon^{2-2r}+\varepsilon^{4-4r}\right).
\end{split}
\end{equation*}
Therefore, recalling \eqref{w0_estimate}, we obtain for $\tw_R\in\cB^{\cX_2}_{K\varepsilon^\eta}$
\begin{equation*}
\begin{split}
\|G(\tw_R)\|_{\cX_2}&\leq P\big[(\varepsilon+\varepsilon^{2-2r}+\varepsilon^{4-4r})(\varepsilon^2+K\varepsilon^\eta)+\varepsilon^4(1+K\varepsilon^\eta)+K^2\eps^{2\eta}(1+\varepsilon^2)+K\varepsilon^{2+\eta}+\varepsilon^6+K^3\varepsilon^{3\eta}\big]\\
&\leq (P+1)\big(\varepsilon^3+K\varepsilon^{1+\eta}+K^2\eps^{2\eta}\big),
\end{split}
\end{equation*}
where the second inequality holds for all $r\in\left(0,\tfrac12\right]$ and $P$ is a constant depending just on the norms $\|\hBp_j\|_{L^2_{s_B}}$, $j\in\{1,\dots,N\}$, and $c_0$. Choosing $\eta=3$ and $K=P+2$, then $G:\cB^{\cX_2}_{K\varepsilon^\eta}\to\cB^{\cX_2}_{K\varepsilon^\eta}$, i.e.
\begin{equation}\label{G_estimate}
\|G(\tw_R)\|_{\cX_2}\leq K\varepsilon^3.
\end{equation}
Next, we address the contraction property. For  $\tu^{(j)}:=\tv+\tw_0+\tw_R^{(j)}, j\in\{1,2\}$ we have
\begin{equation}\label{G_Lip}
\begin{split}
\|G(\tw_R^{(1)})-G(\tw_R^{(2)})\|_{\cX_2}&\leq c\big[\max_{a,b\in\{1,2,3\}}\big(\|\tv_a\astB\tv_b\|_{\cX_2}+\|\tv_a\astB\tbv_b\|_{\cX_2}\big)+\|w_0\|^2_{\cX_2}+\|\tw_R^{(1)}\|^2_{\cX_2}\\
&\qquad+\|\tw_R^{(2)}\|^2_{\cX_2}+\|\tv\|_{\cX_2}\|\tw_R^{(1)}\|_{\cX_2}+\|\tv\|_{\cX_2}\|\tw_R^{(2)}\|_{\cX_2}\big]\|\tw_R^{(1)}-\tw_R^{(2)}\|_{\cX_2}\\
&\leq c\varepsilon\|\tw_R^{(1)}-\tw_R^{(1)}\|_{\cX_2}.
\end{split}
\end{equation}
The contraction thus follows provided $\varepsilon$ is small enough. By the Banach fixed point theorem there exists a unique solution to equation \eqref{wR_eq_fixpt} for $\tw_R$ which satisfies the estimate
\begin{equation}\label{wR_estimate}
\|\tw_R\|_{\cX_2}\leq K\varepsilon^3.
\end{equation}

For later use, we need also to show the Lipschitz dependence of $\tw$ on $\tv_B$ and $\tv_C$.
\begin{lem}\label{Lemma_VR_Lip}
	The map $\cB^{\cX_2}_\rho(0)\times\cB^{\cX_2}_{\rho\varepsilon^{2-2r}}(0)\ni(\tvB,\tvC)\mapsto\tw(\tvB,\tvC)\in\cB^{\cX_2}_{\rho\varepsilon^2}(0)$ is Lipschitz-continuous for any $\rho>0$ and $\eps>0$ small enough. The Lipschitz constant $C_L$ satisfies $C_L=\cO(\varepsilon)$ as $\varepsilon\to0$.
\end{lem}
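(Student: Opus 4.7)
The plan is to decompose $\tw = \tw_0 + \tw_R$ and establish the Lipschitz estimate for each summand separately. Throughout, denote the increments $\delta\tvB := \tvB^{(1)} - \tvB^{(2)}$, $\delta\tvC := \tvC^{(1)} - \tvC^{(2)}$, $\delta\tv := \delta\tvB + \delta\tvC$, and analogously $\delta\tw_\bullet$ for the differences of the derived quantities.

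For $\tw_0$, I would subtract \eqref{w0_eq} written for the two input pairs and invoke Lemma \ref{invertibility} to obtain
$$
\|\delta\tw_0\|_{\cX_2} \leq c\,\|\tF(\tv^{(1)}) - \tF(\tv^{(2)})\|_{\cX_2}.
$$
Exploiting the trilinearity of $\tF$, this difference expands into a finite sum of terms of the form $\delta\tv_a \astB \tv_b^{(i)} \astB \tbv_c^{(j)}$ (and cyclic variants), and each such term then splits further according to $\tv = \tvB + \tvC$. The key observation is that every resulting sub-term carries an explicit factor of $\eps$: whenever at least two factors are drawn from $\{\tvB^{(1)}, \tvB^{(2)}\}$, the pair gains an $\eps$ via Lemma \ref{LemmaDU}; otherwise, the presence of at least one $\tvC$-factor supplies a gain of $\|\tvC^{(i)}\|_{\cX_2} \leq \rho\,\eps^{2-2r} \leq \rho\,\eps$ via the algebra property \eqref{algebra:X_2}, the last inequality requiring exactly $r \leq \tfrac12$. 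This produces $\|\delta\tw_0\|_{\cX_2} \leq c\,\eps\bigl(\|\delta\tvB\|_{\cX_2} + \|\delta\tvC\|_{\cX_2}\bigr)$.

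For $\tw_R$, the idea is to regard the right-hand side of \eqref{wR_eq_fixpt} as a map $\tilde G(\tvB, \tvC;\tw_R)$ parametrised by $(\tvB, \tvC)$ (with $\tw_0 = \tw_0(\tvB,\tvC)$ already substituted from Step~1), and to split
\begin{align*}
\delta\tw_R &= \bigl[\tilde G(\tvB^{(1)},\tvC^{(1)};\tw_R^{(1)}) - \tilde G(\tvB^{(2)},\tvC^{(2)};\tw_R^{(1)})\bigr] \\
&\quad + \bigl[\tilde G(\tvB^{(2)},\tvC^{(2)};\tw_R^{(1)}) - \tilde G(\tvB^{(2)},\tvC^{(2)};\tw_R^{(2)})\bigr].
\end{align*}
The second bracket is bounded by $c\,\eps\,\|\delta\tw_R\|_{\cX_2}$ via the contraction estimate \eqref{G_Lip}. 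The first bracket is analysed exactly as in Step~1, using the Lipschitz dependence of $\tw_0$ on $(\tvB, \tvC)$ just established together with the smallness bounds $\|\tw_0\|_{\cX_2} \leq c\eps^2$ and $\|\tw_R^{(1)}\|_{\cX_2} \leq K\eps^3$, to conclude that it is bounded by $c\,\eps\bigl(\|\delta\tvB\|_{\cX_2} + \|\delta\tvC\|_{\cX_2}\bigr)$. Absorbing the contraction contribution on the left for $\eps$ small enough yields the desired estimate on $\delta\tw_R$, and hence on $\delta\tw$.

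The main technical hurdle is not conceptual but combinatorial: one must verify that every sub-term produced by the trilinear expansions indeed carries an explicit $\eps$-factor. The arithmetic works because Lemma \ref{LemmaDU} extracts an $\eps$ from any pair of $\tvB$-factors that appears, while otherwise the smallness of $\tvC$, $\tw_0$, $\tw_R$ (of orders $\eps^{2-2r}$, $\eps^2$, $\eps^3$ respectively) supplies the required compensation. The constraint $r \leq 1/2$ is precisely what is needed so that each $\tvC$-factor contributes at least one power of $\eps$ rather than a weaker power.
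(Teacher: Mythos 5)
Your proposal is correct and follows essentially the same route as the paper: Lipschitz continuity of $\tw_0$ via Lemma \ref{invertibility} and the trilinear expansion of $\tF(\tv^{(1)})-\tF(\tv^{(2)})$ with the $\eps$-gain from Lemma \ref{LemmaDU} resp. the smallness of $\tv_C$ (using $r\le\tfrac12$), and then the estimate $\|\tw_R^{(1)}-\tw_R^{(2)}\|_{\cX_2}\le c\,\eps\big(\|\tv^{(1)}-\tv^{(2)}\|_{\cX_2}+\|\tw_R^{(1)}-\tw_R^{(2)}\|_{\cX_2}\big)$ obtained by computations analogous to \eqref{G_Lip}, with the last term absorbed for small $\eps$. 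Your version merely spells out the combinatorics and the parameter/fixed-point splitting that the paper leaves as "analogous to \eqref{G_Lip}".
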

\begin{proof}
	Let $\tv^{(1)}=(\tvB^{(1)},\tvC^{(1)})$ and $\tv^{(2)}=(\tvB^{(2)},\tvC^{(2)})\in\cB^{\cX_2}_\rho(0)\times\cB^{\cX_2}_{\rho\varepsilon^{2-2r}}(0)$ and for $i\in\{1,2\}$ define $\tw_0^{(i)}=\tw_0(\tv^{(i)})$ as solutions of \eqref{w0_eq} and $\tw_R^{(i)}=\tw_R(\tv^{(i)})$ as solutions of \eqref{wR_eq} with $\tw_0$ replaced by $\tw_0^{(i)}$. Such functions are well-defined since $\tv^{(i)}\in\cB^{\cX_2}_\rho(0)\times\cB^{\cX_2}_{\rho\varepsilon^{2-2r}}(0)$ implies that the respective coefficients $\hB_j^{(i)}$, $\hC^{(i)}$ fulfil assumption \eqref{BjC_smallness}. Since $\|\tw_0^{(1)}-\tw_0^{(2)}\|_{\cX_2}\leq c_1\|\tF(\tv^{(1)})-\tF(\tv^{(2)})\|_{\cX_2}$, we obtain similarly to \eqref{G_Lip}
	$$\|\tw_0^{(1)}-\tw_0^{(2)}\|_{\cX_2}\leq c\varepsilon\|\tv^{(1)}-\tv^{(2)}\|_{\cX_2}.$$
	For $\tw_R^{(i)}$ we have
	$$\eQk L_k\Qke\tw_R^{(i)}=\omega^2\eQk\big(\tF(\tv^{(i)}+\tw_0^{(i)}+\tw_R^{(i)})-\tF(\tv^{(i)})\big)$$
	and again with analogous computations as in \eqref{G_Lip} we then get
	$$\|\tw_R^{(1)}-\tw_R^{(2)}\|_{\cX_2}\leq c\varepsilon\big(\|\tv^{(1)}-\tv^{(2)}\|_{\cX_2}+\|\tw_R^{(1)}-\tw_R^{(2)}\|_{\cX_2}\big),$$
	which leads to the desired estimate.
\end{proof}

\subsection{Component \texorpdfstring{$\boldsymbol{\hC}$}{C-hat}}\label{Section_C}
So far, we have completed the first two steps of the initial program: under assumption \eqref{BjC_smallness} we inferred the existence of a small solution $\tw$ of \eqref{w_Eq}. Now we have to deal with the component $\tv$, i.e. the projection of $\tu$ onto the mode $p_{n_*}$. Recall that our aim is to find solutions $\tv$ with $\hBp_j$ close to the coefficients $\hA_j$ from the ansatz \eqref{ansatz_Bloch_var} and with $\hC$ small.
\vskip0.2truecm
In this section we assume $r\in\left(0,\tfrac12\right]$, choose an arbitrary $(\hB_j)_{j=1}^N$ with $\|\hBp_j\|_{L^2_{s_B}(\R^2)}\leq c$ for all $j\in\{1,\dots,N\}$, and seek a small $\hC$. Recall that $\tu=\tv+\tw=\tvB+\tvC+\tw$, where $\tvB$ is now fixed and $\tw$ is determined by Sec. \ref{Section_w0}-\ref{Section_wR}. Hence, we write $\tw=\tw(\tvC)$ suppressing the dependence on $\tvB$. 
\vskip0.2truecm
Since the support of $\hCp$ within the Brillouin zone $\B$ is in $\B\setminus \cup_{j=1}^NB_{\eps^r}(k^{(j)})$, we  introduce the characteristic function
$$\chi^C(k):=1-\sum_{j=1}^N\chi_{B_{\eps^r}(k^{(j)})}(k).$$
The equation for $\hC$ then reads
\begin{equation}\label{C_eq}
\begin{split}
\hC(k)&=(\omega_{n_*}(k)^2-\omega^2)^{-1}\omega^2 \chi^C(k)\langle \tF(\tvB)(\cdot,k),p_{n_*}(\cdot,k)\rangle\\
&\quad+(\omega_{n_*}(k)^2-\omega^2)^{-1}\omega^2\chi^C(k)\langle \big(\tF(\tvB+\tvC+\tw(\tvC))-\tF(\tvB)\big)(\cdot,k),p_{n_*}(\cdot,k)\rangle\\
&=:T_{1}(k)+T_{2}(k).
\end{split}
\end{equation}
In order to enjoy the algebra property of $\cX_s, s >1$, we multiply both sides of \eqref{C_eq} by $p_{n_*}(\cdot,k)$ and apply a fixed point approach to the resulting equation for $\tv_C$ in a small ball in $\cX_2$.
\begin{equation}\label{vC_eq}
\begin{split}
\tv_C(x,k)=T_{1}(k)p_{n_*}(x,k)+T_{2}(k)p_{n_*}(x,k)=:\cH(\tv_C)(x,k),
\end{split}
\end{equation}
which we aim to solve in the ball 
$$\cB_{K\varepsilon^{2-2r}}^{\cX_2}:=\{f\in\cX_2\,|\,\|f\|_{\cX_2}\leq K\varepsilon^{2-2r}\}$$
for some $K>0$ and all $\eps>0$ small enough.

We have isolated the leading order part $\tF(\tvB)$ of the nonlinearity in the term $T_1$. Note that $\tF(\tvB)$ is concentrated only near a finite number of $k-$points, namely
$$\supp (\tF(\tvB)(x,\cdot))\cap \B \subset B_{3\eps^r}(S),$$
where
$$S:=\{k\in \B: k = k^{(\alpha)}+k^{(\beta)}-k^{(\gamma)}+K \text{ for some } \alpha,\beta,\gamma\in \{1,\dots,N\}, K\in\Lambda^*\}.$$
We write 
$$ S=S_\text{c}\cup S_{\text{nc}},\; \text{ where }\; S_\text{c}:=\{k^{(1)},\dots, k^{(N)}\}.$$
$S_c$ is the critical set as it lies in the level set $W_{\omega_*}$.
Note that $\chi^C(k) =0$ for each $k\in B_{\eps^r}(S_\text{c})$. We split $T_1$ accordingly
$$T_1= T_{1,\text{c}}+T_{1,\text{nc}},\; \text{ where }\;\; \supp (T_{1,\text{c}})\subset B_{3\eps^r}(S_\text{c})\setminus B_{\eps^r}(S_\text{c})\;\;\ \text{and}\;\; \ \supp (T_{1,\text{nc}}) \subset B_{3\eps^r}(S_\text{nc}).$$
We estimate these components separately.

First, we estimate the factor $(\omega^2-\omega_{n_*}(k)^2)^{-1}$. Due to assumption (A3) we have
\beq\label{E:est-om-nc}
|(\omega^2-\omega_{n_*}(k)^2)^{-1}|\leq c \qquad \forall k\in \supp (T_{1,\text{nc}}).
\eeq
On $\supp (T_{1,\text{c}})$ we use the locally quadratic nature of $\omega_{n_*}(k)$ near $k=k^{(j)}, j=1,\dots,N$. Indeed, as $\omega_{n_*}$ has an extremum at each $k^{(j)}$, we have
\begin{equation*}
\nabla\big(\omega_{n_*}(k)^2\big)|_{k=\kj}=2\omega_*\nabla\omega_{n_*}(\kj)=0.
\end{equation*}
Moreover,
\begin{equation*}
\nabla^2\big(\omega_{n_*}(k)^2\big)|_{k=\kj}=2\omega_*\nabla^2\omega_{n_*}(\kj)+\cK(\omega_{n_*})(\kj),
\end{equation*}
where
\begin{equation*}
\cK(\omega_{n_*}):=\begin{pmatrix}
\big(\partial_1\omega_{n_*}\big)^2 & \big(\partial_1\omega_{n_*}\big)\big(\partial_2\omega_{n_*}\big) \\ 
\big(\partial_1\omega_{n_*}\big)\big(\partial_2\omega_{n_*}\big) & \big(\partial_2\omega_{n_*}\big)^2
\end{pmatrix},
\end{equation*}
the determinant of which evidently vanishes. Using (A4), we deduce then that the Hessian of 
$$k\mapsto\big(\omega_{n_*}(k)\big)^2$$ 
is definite at $\kj.$ This in turns implies that
\begin{equation}\label{sup_inverse_Hessian}
|\omega_{n_*}(k)^2-\omega^2|^{-1}\leq c\varepsilon^{-2r} \qquad \forall k\in \supp (T_{1,\text{c}})
\end{equation}
for $\varepsilon$ small enough. It is mainly here where assumption (A3) is used. If $\{k^{(1)},\dots,k^{(N)}\}$ was a proper subset of the level set $W_{\omega_*}$, then $\supp (T_{1,\text{c}})$ would intersect $W_{\omega_*}$ and $|\omega_{n_*}(k)^2-\omega^2|^{-1}$ would blow up on $\supp (T_{1,\text{c}})$.

The estimate of $T_{1,\text{nc}}$ follows directly from \eqref{E:est-om-nc} and \eqref{E:FvB-est}. We get
\beq\label{E:T1nc-est}
\|T_{1,\text{nc}}p_{n_*}\|_{\cX_2}\leq c \eps^2 \sumabgN\|\hBp_\alpha\|_{L^2_{s_B}}\|\hBp_\beta\|_{L^2_{s_B}}\|\hBp_\gamma\|_{L^2_{s_B}}\leq c\eps^2
\eeq
using \eqref{BjC_smallness}.

For $T_{1,\text{c}}$ the estimate \eqref{sup_inverse_Hessian} causes loss of powers of $\eps$ but we gain some powers by assuming a fast decay of  $\hB_j$. In detail, define the weight $h_j(k):=\big(1+\tfrac{|k-\kj|}\varepsilon\big)^{s_B}$. Then analogously to the proof of \eqref{E:FvB-est} in Lemma \ref{Lemma2} we have, due to \eqref{sup_inverse_Hessian},
\beq\label{E:T1c-est}
\begin{aligned}
\|T_{1,\text{c}}p_{n_*}\|_{\cX_2} & \leq c \eps^{2-2r}\sum_{j=1}^N\sumabgN \sup_{k\in B_{\eps^r}(k^{(j)})}h_j(k)^{-1}\|h_j(k^{(j)}+\eps\cdot)\hBp_\alpha * \hBp_\beta * \hbBp_\gamma\|_{L^2}\\
& \leq c\eps^{2-2r+(1-r)s_B}\sumabgN\|\hBp_\alpha\|_{L^2_{s_B}}\|\hBp_\beta\|_{L^2_{s_B}}\|\hBp_\gamma\|_{L^2_{s_B}}\leq c\eps^{2-2r+(1-r)s_B}.
\end{aligned}
\eeq

The estimate of the term $T_2$ is more delicate as $T_2$ is nonlinear in $\tv_C$. Since $\|\tvB\|_{\cX_2}\leq c$ by assumption, we get $\tw\in\cB_{K\varepsilon^2}^{\cX_2}$ as a solution of \eqref{w_Eq} with $K$ dependent on $\big(\|\hBp_j\|_{L^2_{s_B}}\big)_{j=1}^N$, provided $\|\tilde{v}_C\|_{\cX_2}\leq K\eps^{2-2r}$. We show now that such $\tilde{v}_C$ exists. First, similarly to the map $G$ in Sec. \ref{Section_wR} and taking into account \eqref{sup_inverse_Hessian},
\begin{equation}\label{vCj_NLest}
\begin{split}
\|T_2p_{n_*}\|_{\cX_2}&\leq c\varepsilon^{-2r}\|\tF(\tu)-\tF(\tvB)\|_{\cX_2}\\
&\leq c\varepsilon^{-2r}\bigg[\|\tvB\|_{\cX_2}\|\tvC\|_{\cX_2}\|\tw\|_{\cX_2}+\|\tw\|^3_{\cX_2}+\|\tw\|^2_{\cX_2}(\|\tvB\|_{\cX_2}+\|\tvC\|_{\cX_2})\\
&\quad+\|\tvC\|^2_{\cX_2}\|\tw\|_{\cX_2}+\!\!\!\sum_{a,b\in\{1,2,3\}}\!\!\big(\|\tv_{B,a}\astB\tv_{B,b}\|_{\cX_2}+\|\tv_{B,a}\astB\tbv_{B,b}\|_{\cX_2}\big)\|\tw\|_{\cX_2}\\
&\quad+\sum_{a,b,c\,\in\{1,2,3\}}\sum_{\substack{\mu_1,\mu_2,\mu_3\in\{B,C\}\\(\mu_1,\mu_2,\mu_3)\not=(B,B,B)}}\|\tv_{\mu_1,a}\astB\tv_{\mu_2,b}\astB\tbv_{\mu_3,c}\|_{\cX_2}\bigg].
\end{split}
\end{equation}
We may apply the algebra property of $\cX_2$ and Lemma \ref{LemmaDU} to treat the convolution terms. For $\tv_C\in \cB_{K\varepsilon^{2-2r}}^{\cX_2}$ we obtain
\begin{equation*}\label{convolutions_BBC_CCB_CCC}
\begin{split}
&\|\tv_{B_i,a}\astB\tv_{B_j,b}\astB\tbv_{C_m,c}\|_{\cX_2}\leq\|\tv_{B_i,a}\astB\tv_{B_j,b}\|_{\cX_2}\|\tv_{C_m,c}\|_{\cX_2}\leq c\varepsilon^{3-2r},\\
&\|\tv_{B_i,a}\astB\tv_{C_j,b}\astB\tbv_{C_m,c}\|_{\cX_2}\leq\|\tv_{B_i,a}\|_{\cX_2}\|\tv_{C_j,b}\|_{\cX_2}\|\tv_{C_m,c}\|_{\cX_2}\leq c\varepsilon^{4-4r},\\
&\|\tv_{C_i,a}\astB\tv_{C_j,b}\astB\tbv_{C_m,c}\|_{\cX_2}\leq\|\tv_{C_i,a}\|_{\cX_2}\|\tv_{C_j,b}\|_{\cX_2}\|\tv_{C_m,c}\|_{\cX_2}\leq c\varepsilon^{6-6r}.
\end{split}
\end{equation*}
Analogous estimates hold when the complex conjugation is moved onto another term. From \eqref{vCj_NLest} we then get
\begin{equation}\label{vCj_NLest_pt2}
\begin{split}
\|T_2p_{n_*}\|_{\cX_2}&\leq c\eps^{-2r}\big(\varepsilon^{4-2r}+\varepsilon^6+\varepsilon^4(1+\varepsilon^{2-2r})+\varepsilon^{6-4r}+\varepsilon^3+\varepsilon^{3-2r}+\varepsilon^{4-4r}+\varepsilon^{6-6r}\big)\\
&\leq c\varepsilon^{3-4r},
\end{split}
\end{equation}
since $r\in\left(0,\tfrac12\right]$.

Combining then \eqref{E:T1nc-est}, \eqref{E:T1c-est} and \eqref{vCj_NLest_pt2}, we obtain
\begin{equation*}
\|\cH(\tv_{C})\|_{\cX_2}\leq c\big(\eps^2+\varepsilon^{(1-r)s_B+2-2r}+\varepsilon^{3-4r}\big).
\end{equation*}
Since for $r\in\left(0,\tfrac12\right]$ all exponents are greater than or equal to $2-2r$, we get 
$$\cH:\cB^{\cX_2}_{K\varepsilon^{2-2r}}\to\cB^{\cX_2}_{K\varepsilon^{2-2r}}$$ 
for $K=K\big(\|\hBp_1\|_{L^2_{s_B}},\dots,\|\hBp_N\|_{L^2_{s_B}}\big)$.
\vskip0.2truecm

We address now the contraction property of the map $\cH$. 
Take $\tv_C^{(1)},\tv_C^{(2)}\in\cB_{K\varepsilon^{2-2r}}^{\cX_2}$ and consider $\tw^{(i)}:=\tw(\tv_C^{(i)})$ for $i\in\{1,2\}$ as given by Sec. \ref{Section_w0}-\ref{Section_wR}. We aim to estimate $\|\cH(\tv_C^{(1)})-\cH(\tv_C^{(2)})\|_{\cX_2}$. Clearly, $T_1$ is independent of $\tv_C$ and similarly to \eqref{vCj_NLest} we infer
\begin{equation*}
\begin{split}
\|\big(T_{2}&(\tv_C^{(1)})-T_{2}(\tv_C^{(2)})\big)p_{n_*}\|_{\cX_2}\leq c\eps^{-2r}\bigg[\|\tvB\|_{\cX_2}\|\tw^{(1)}\astB\tvC^{(1)}-\tw^{(2)}\astB\tvC^{(2)}\|_{\cX_2}\\
&+\|\tw^{(1)}\astB\tw^{(1)}\astB\tbv_C^{(1)}-\tw^{(2)}\astB\tw^{(2)}\astB\tbv_C^{(2)}\|_{\cX_2}+\|\tw^{(1)}\astB\tvC^{(1)}\astB\tbv_C^{(1)}-\tw^{(2)}\astB\tvC^{(2)}\astB\tbv_C^{(2)}\|_{\cX_2}\\
&+\!\!\!\sum_{a,b\in\{1,2,3\}}\!\!\big(\|\tv_{B,a}\astB\tv_{B,b}\|_{\cX_2}+\|\tv_{B,a}\astB\tbv_{B,b}\|_{\cX_2}\big)\|\tv_C^{(1)}-\tv_C^{(2)}\|_{\cX_2}\\
&+\big\|\tvB\|_{\cX_2}\|\tvC^{(1)}\astB\tvC^{(1)}-\tvC^{(2)}\astB\tvC^{(2)}\|_{\cX_2}+\|\tvC^{(1)}\astB\tvC^{(1)}\astB\tbv_C^{(1)}-\tvC^{(2)}\astB\tvC^{(2)}\astB\tbv_C^{(2)}\|_{\cX_2}\bigg].
\end{split}
\end{equation*}
All the terms are then estimated in a similar way, e.g.
\begin{equation*}
\begin{split}\|\tw^{(1)}\astB\tvC^{(1)}-\tw^{(2)}\astB\tvC^{(2)}\|_{\cX_2}&\leq \|\tw^{(1)}\|_{\cX_2}\|\tvC^{(1)}-\tvC^{(2)}\|_{\cX_2}+\|\tvC^{(2)}\|_{\cX_2}\|\tw^{(1)}-\tw^{(2)}\|_{\cX_2}\\
&\leq c\varepsilon^2\|\tvC^{(1)}-\tvC^{(2)}\|_{\cX_2}+K\varepsilon^{2-2r}\|\tw^{(1)}-\tw^{(2)}\|_{\cX_2}\\
&\leq c\big(\varepsilon^2+\varepsilon^{3-2r}\big)\|\tvC^{(1)}-\tvC^{(2)}\|_{\cX_2},
\end{split}
\end{equation*}
where we applied Lemma \ref{Lemma_VR_Lip} in the last step. Hence
\begin{equation*}\label{contraction_H:2}
\|\cH(\tv_{C}^{\,(1)})-\cH(\tv_{C}^{\,(2)})\|_{\cX_2}\leq c(\varepsilon^{2-2r}+\varepsilon^{3-4r})\|\tv_C^{(1)}-\tv_C^{(2)}\|_{\cX_2},
\end{equation*}
i.e. a contraction due to $r\in (0,\tfrac{1}{2}].$

As a result, we obtain a solution $\tv_C\in\cB^{\cX_2}_{K\varepsilon^{2-2r}}$ of \eqref{vC_eq}, where $K=K\big(
\|\hB_1\|_{L^2_{s_B}},\dots,\|\hB_N\|_{L^2_{s_B}}\big)$, and in turn the estimates
\begin{equation}\label{vC_estimate}
\|\tvC\|_{\cX_2}\leq c\varepsilon^{2-2r}
\end{equation} 
and
\begin{equation}\label{C_estimate}
\|\hC\|_{L^2(\B)}\leq c\varepsilon^{2-2r}.
\end{equation}
Indeed, \eqref{C_estimate} follows from \eqref{vC_estimate}:
\begin{equation*}
\begin{split}
\|\hC\|_{L^2(\B)}&\leq\big(\min_{k\in\B}\|p_{n_*}(\cdot,k)\|_{H^2(\cQ)}\big)^{-1}\left(\int_\B\big|\hC(k)\big|^2\|p_{n_*}(\cdot,k)\|^2_{H^2(\cQ)}\dd k\right)^{\frac12}\\
&\leq c\|\tvC\|_{\cX_2}\leq c\varepsilon^{2-2r}.
\end{split}
\end{equation*}

\subsection{Components \texorpdfstring{$\boldsymbol{\hB_j}$}{Bj-hat}}\label{Section_B}
We finally address the component $\tvB$ of the solution $\tu$ and with $\tw$ and $\tvC$ found above we solve for such $\tvB$, for which $\big(\hB_j\big)_{j=1}^N$ is close to the solutions $\big(\hA_j\big)_{j=1}^N$ of the CMEs \eqref{CMEs_FT}. As a result the component $\tvB$ is the dominant part of the solution $\tu$.

Equation \eqref{v_Eq} on the compact support of $\hBp_j$ can be rewritten as
\begin{equation*}
(\omega_{n_*}^2(k)-\omega^2)\frac1\varepsilon\hBp_j\left(\frac{k-\kj}\varepsilon\right)=\omega^2\langle\tF(\tu)(\cdot,k),p_{n_*}(\cdot,k)\rangle, \quad k \in B_{\eps^r}(k^{(j)})
\end{equation*}
or equivalently
\begin{equation}\label{vB_eq}
(\omega_{n_*}^2(k)-\omega^2)\frac1\varepsilon\hBp_j\left(\frac{k-\kj}\varepsilon\right)=\omega^2\chi_{\varepsilon,r}(k-\kj)\langle\tF(\tu)(\cdot,k),p_{n_*}(\cdot,k)\rangle, \quad k \in\R^2,
\end{equation}
where we define $\chi_{\varepsilon,r}:=\chi_{B_{\eps^r}(0)}$.

Expanding the eigenvalue $\omega_{n_*}$ near $\kj$ by assumptions (A3) and (A4) as
\begin{equation*}
\omega_{n_*}(k)=\omega_*+\frac12(k-\kj)^{\mathsf T}\nabla^2\omega_{n_*}(\kj)(k-\kj)+\omega_r(k),
\end{equation*}
where $|\omega_r(k)|\leq C|k-\kj|^3$, and then recalling \eqref{omega}, we obtain
\begin{equation*}
\omega_{n_*}^2(k)-\omega^2=\varepsilon^2\omega_*\left(\left(\tfrac{k-\kj}\varepsilon\right)^{\mathsf T} \nabla^2\omega_{n_*}(\kj)\left(\tfrac{k-\kj}\varepsilon\right)-2\Omega\right)-\varepsilon^4\Omega^2+\omega_R(k),
\end{equation*}
with
\begin{equation}\label{omega_R}
|\omega_R(k)|\leq C|k-\kj|^3.
\end{equation}
Inserting this into equation \eqref{vB_eq} and defining $k':=\tfrac{k-\kj}\varepsilon$, we obtain for $k'\in B_{\eps^{r-1}}(0)$
\begin{equation}\label{vB_eq:2}
\begin{split}
\omega_*\big((k')^{\mathsf T}\nabla^2\omega_{n_*}(\kj)k'-2\Omega\big)\hBp_j(k')&=\,\frac{\omega^2}\varepsilon\chi_{\varepsilon,r-1}(k')\langle\tF(\tv)(\cdot,\kj+\varepsilon k'),p_{n_*}(\cdot,\kj+\varepsilon k')\rangle\\
&\quad+\frac{\omega^2}\varepsilon\chi_{\varepsilon,r-1}(k')\langle\big(\tF(\tu)-\tF(\tv)\big)(\cdot,\kj+\varepsilon k'),p_{n_*}(\cdot,\kj+\varepsilon k')\rangle\\
&\quad-\frac1\varepsilon\big(\omega_R(\kj+\varepsilon k')-\varepsilon^4\Omega^2\big)\hBp_j(k').
\end{split}
\end{equation}
Now we estimate separately the terms on the right. We will see that the second and third terms are small, while the first one recovers the right-hand side of the CMEs \eqref{CMEs_FT}, so that \eqref{vB_eq:2} may be interpreted as a perturbed CME system. 

First we deal with the third term of \eqref{vB_eq:2}. By \eqref{omega_R},
\begin{equation}\label{omegaR_estimate}
\begin{split}
\left\|\frac{\omega_R(\kj+\varepsilon\cdot)}\varepsilon\hB_j\right\|_{L^2(\varepsilon^{-1}\B)}&=\frac1\varepsilon\left(\int_{\varepsilon^{-1}\B}|\omega_R(\kj+\varepsilon k')|^2|\hB_j(k')|^2\dd k'\right)^\frac12\\
&\leq c\varepsilon^2\left(\int_{B_{\varepsilon^{r-1}}(0)}\frac{|k'|^6}{(1+|k'|)^{2s_B}}(1+|k'|)^{2s_B}|\hB_j(k')|^2\dd k'\right)^\frac12\\
&\leq c\varepsilon^2\sup_{|k'|<\varepsilon^{r-1}}\frac{|k'|^3}{(1+|k'|)^{s_B}}\|\hBp_j\|_{L^2_{s_B}}\\
&\leq c\varepsilon^{2-\max\{0,(1-r)(3-s_B)\}}\|\hBp_j\|_{L^2_{s_B}}.
\end{split}
\end{equation}
To make this term $o(1)$, we need that $2>\max\{0,(1-r)(3-s_B)\}$. This is ensured for all $r\in\big(0,\frac12\big]$ as long as we take $s_B>1$.

The second term in \eqref{vB_eq:2} is estimated similarly as in Sec. \ref{Section_wR}. Indeed,
\begin{equation}\label{FuFv_estimate}
\begin{split}
\bigg\|\frac{\omega^2}\varepsilon\langle\big(&\tF(\tu)-\tF(\tv)\big)(\cdot,\kj+\varepsilon k'),p_{n_*}(\cdot,\kj+\varepsilon k')\rangle\bigg\|_{L^2(\varepsilon^{-1}\B)}\\
&\leq\omega^2\eps^{-1}\esssup_{k\in\B}\|p_{n_*}(\cdot,k)\|_{L^2(\cQ)}\left(\int_{\varepsilon^{-1}\B}\|\big(\tF(\tu)-\tF(\tv)\big)(\cdot,\kj+\varepsilon k')\|^2_{L^2(\cQ)}\dd k'\right)^\frac12\\
&\leq c\eps^{-1}\left(\int_{\B+\kj}\|\big(\tF(\tu)-\tF(\tv)\big)(\cdot, k)\|^2_{H^2(\cQ)}\varepsilon^{-2}\dd k\right)^\frac12\\
&=c\varepsilon^{-2}\|\tF(\tu)-\tF(\tv)\|_{\cX_2}\leq c\varepsilon,
\end{split}
\end{equation}
where the constant $c$ depends just on $\big(\|\hBp_j\|_{L^2_{s_B}}\big)_{j=1}^N$. The last inequality is given by \eqref{G_estimate}.

Let us now address the first term in \eqref{vB_eq:2} on its support $k'\in B_{\eps^{r-1}}(0)$. Equivalently we consider $k\in B_{\eps^r}(k^{(j)})$ and split the term as follows:
\begin{equation}\label{W1W2W3}
\begin{split}
\langle\tF&(\tv)(\cdot,k),p_{n_*}(\cdot,k)\rangle= \langle(\tF(\tv)-\tF(\tv_B))(\cdot,k),p_{n_*}(\cdot,k)\rangle\\
&+\eps^{-3}\sumabgN\int_{2\B}\int_\B \hB_\alpha\left(\frac{k-l-k^{(\alpha)}}{\eps}\right)\hB_\beta\left(\frac{l-t-k^{(\beta)}}{\eps}\right)\hbB_\gamma\left(\frac{t+k^{(\gamma)}}{\eps}\right)\\
&\quad\cdot\sumabcd\big(\CyrB^{a,b,c,d}(k,k-l,l-t,t)-\Theta_{\alpha,\beta,\gamma,j}^{a,b,c,d}\big)\dd t\dd l\\
&+\eps^{-3}\sumabcd\sumabgN\Theta_{\alpha,\beta,\gamma,j}^{a,b,c,d}\int_{2\B}\int_\B \hB_\alpha\left(\frac{k-l-k^{(\alpha)}}{\eps}\right)\hB_\beta\left(\frac{l-t-k^{(\beta)}}{\eps}\right)\\
&\quad\cdot\hbB_\gamma\left(\frac{t+k^{(\gamma)}}{\eps}\right)\dd t\dd l\\
&:=W_1(k)+W_2(k)+W_3(k),
\end{split}
\end{equation}
where
\begin{equation*}\label{CYRB}
\CyrB^{a,b,c,d}(k,k-l,l-t,t):=\langle \underline{\chi}_{a,b,c,d}^{(3)}p_{n_*,a}(\cdot,k-l)p_{n_*,b}(\cdot,l-t)\overline{p_{n_*,c}(\cdot,-t)},p_{n_*,d}(\cdot,k)\rangle
\end{equation*}
and
\begin{equation}\label{Theta}
\begin{split}
\Theta_{\alpha,\beta,\gamma,j}^{a,b,c,d}:&=\langle \underline{\chi}_{a,b,c,d}^{(3)} u_{n_*,a}(\cdot,\kalpha)u_{n_*,b}(\cdot,\kbeta)\overline{u_{n_*,c}(\cdot,\kgamma)},u_{n_*,d}(\cdot,\kj)\rangle\\
&=\langle \underline{\chi}_{a,b,c,d}^{(3)}p_{n_*,a}(\cdot,\kalpha)p_{n_*,b}(\cdot,\kbeta)\overline{p_{n_*,c}(\cdot,\kgamma)}e^{\ri K_{\alpha,\beta,\gamma,j}\,\,\bigcdot},p_{n_*,d}(\cdot,\kj)\rangle\\
&=\CyrB_{\alpha,\beta,\gamma,j}^{a,b,c,d}(\kj,\kalpha-K_{\alpha,\beta,\gamma,j},\kbeta,-\kgamma)
\end{split}
\end{equation}
with $K_{\alpha,\beta,\gamma,j}:=\kalpha+\kbeta-\kgamma-\kj$.

The aim is to show that $\eps^{-1}W_1$ and $\eps^{-1}W_2$ are small and that $\eps^{-1}\tfrac{\omega_*}{2}\chi_{\eps,r}(\cdot-\kj)W_3$ is the Fourier transform of the nonlinear term $\cN_j$ in the CMEs applied to $(\hBp_1,\dots,\hBp_N)$ and evaluated at $\eps^{-1}(\cdot-\kj)$. 

First, $W_1$ is estimated analogously to the term $T_2$ in Sec. \ref{Section_C} producing
\beq\label{E:W1-est}
\|W_1\|_{L^2(\B)}\leq c\eps^{3-2r}.
\eeq
For $W_2$ we take advantage of the Lipschitz continuity of $\CyrB$ and of the asymptotically small support of the double convolution of the $\hB$'s. We rewrite
\begin{equation*}
\begin{split}
W_2(\kj\!+\varepsilon k') =&\,\varepsilon\!\sumabcd\,\sumabgN\int_{\eps^{-1}(2\B+\kgamma-\kbeta)}\int_{\varepsilon^{-1}(\B+\kgamma)}\!\hB_\alpha(k'-l')\hB_\beta(l'-t')\hbB_\gamma(t')\\
&\quad\cdot\big(\CyrB^{a,b,c,d}(\kj+\varepsilon k',\kalpha\!-K_{\alpha,\beta,\gamma,j}\!+\varepsilon(k'-l'),\kbeta\!+\varepsilon(l'-t'),-\kgamma\!+\varepsilon t')\\
&\qquad-\CyrB^{a,b,c,d}(\kj,\kalpha-K_{\alpha,\beta,\gamma,j},\kbeta,-\kgamma)\big)\dd t'\dd l'
\end{split}
\end{equation*}
using the obvious changes of variables and the $\eps^{-1}\Lambda^*$-periodicity of $\hB_\alpha, \alpha\in \{1,\dots,N\}.$
Next, we exploit the fact that the map $(k_1,k_2,k_3,k_4)\mapsto\CyrB^{a,b,c,d}(k_1,k_2,k_3,k_4)$ is Lipschitz continuous with respect to all variables, i.e. there is $C_{\cyrB}>0$ such that for all $k',l',t'\in\eps^{-1}\B$
\begin{equation*}
\begin{split}
\big|\CyrB(\kj\!+\varepsilon &k',\kalpha\!-K_{\alpha,\beta,\gamma,j}\!+\varepsilon(k'-l'),\kbeta\!+\varepsilon(l'-t'),-\kgamma\!+\varepsilon t')-\CyrB(\kj,\kalpha\!-K_{\alpha,\beta,\gamma,j},\kbeta,-\kgamma)\big|\\
&\quad\leq C_{\cyrB}\varepsilon\big(|k'|+|k'-l'|+|l'-t'|+|t'|\big)\\
&\quad\leq 2C_{\cyrB}\varepsilon\big(|k'-l'|+|l'-t'|+|t'|\big),
\end{split}
\end{equation*}
where we have omitted the indices of $\CyrB$ for brevity.
Therefore
\begin{equation}\label{W2_est}
\begin{split}
\|W_2(\kj\!+\varepsilon\cdot)\|_{L^2(\varepsilon^{-1}\B)}&\leq c \varepsilon^2\!\sumabgN\big(2\big\||\tau\hBp_\alpha|\ast|\hBp_\beta|\ast|\hbBp_\gamma|\big\|_{L^2}+\big\||\hBp_\alpha|\ast|\hBp_\beta|\ast|\tau\hbBp_\gamma\big|\|_{L^2}\big)\\
&\leq c\eps^2\!\sumabgN\|\hBp_\alpha\|_{L^2_1}\|\hBp_\beta\|_{L^1}\|\hBp_\gamma\|_{L^1}\leq c\eps^2\!\sumabgN\|\hBp_\alpha\|_{L^2_s}\|\hBp_\beta\|_{L^2_s}\|\hBp_\gamma\|_{L^2_s}
\end{split}
\end{equation}
for any $s>1$, where $\tau(k):=k$. Because $\|W_2\|_{L^2(\B)}=\eps \|W_2(\kj+\eps\cdot)\|_{L^2(\eps^{-1}\B)}$, we  get
from \eqref{E:W1-est} and \eqref{W2_est}
\beq\label{E:W1W2-est}
\eps^{-1}\|W_1+W_2\|_{L^2(\B)}\leq c(\eps^{2-2r}+\eps)\leq c\eps
\eeq
as $r\in \left(0,\frac12\right]$.

\subsubsection{Perturbed CMEs}\label{S:perturbedCME}
We return to equation \eqref{vB_eq:2}. By \eqref{omegaR_estimate}, \eqref{FuFv_estimate}, \eqref{W1W2W3} and \eqref{E:W1W2-est} we get for each $j\in  \{1,\dots,N\}$ and $k'\in B_{\eps^{r-1}}(0)$ 
\begin{equation}\label{vB_eq:3}
\begin{split}
\omega_*\big((k')^{\mathsf T}&\nabla^2\omega_{n_*}(\kj)k'-2\Omega\big)\hBp_j(k')=\frac{\omega^2}\varepsilon\chi_{\varepsilon,r-1}(k')\!\sumabcd\,\sumabgN\Theta_{\alpha,\beta,\gamma,j}^{a,b,c,d}\\
&\cdot\frac1{\varepsilon^3}\bigg(\hB_\alpha\left(\tfrac{\cdot-\kalpha}\varepsilon\right)\astB\hB_\beta\left(\tfrac{\cdot-\kbeta}\varepsilon\right)\astB\hbB_\gamma\left(\tfrac{\cdot-\kgamma}\varepsilon\right)\bigg)(\kj+\varepsilon k')+\Zhe_j(\kj+\varepsilon k'),
\end{split}
\end{equation}
where $\Zhe_j$ collects all the perturbations. Since $r\in\big(0,\tfrac12\big]$, it is
\begin{equation}\label{Zhe_error_from_CME_beforeconditions}
\begin{split}
\|\Zhe_j(\kj+\varepsilon\cdot)\|_{L^2(\varepsilon^{-1}\B)}&\leq c\big(\varepsilon^{2-\max\{0,(1-r)(3-s_B)\}}+\varepsilon+\varepsilon^{2-2r}\big)\\
&\leq c\big(\varepsilon^{2-\max\{0,(1-r)(3-s_B)\}}+2\varepsilon\big).
\end{split}
\end{equation}
Prescribing now
\begin{equation}\label{r_sC_sB_conditions}
s_B\geq3-\frac1{1-r},
\end{equation}
we see that the first exponent in \eqref{Zhe_error_from_CME_beforeconditions} is greater or equal than $1$. Therefore, under condition \eqref{r_sC_sB_conditions} we get
\begin{equation}\label{Zhe_error_from_CME}
\|\Zhe_j(\kj+\varepsilon\cdot)\|_{L^2(\varepsilon^{-1}\B)}\leq c\varepsilon.
\end{equation}
Note that \eqref{r_sC_sB_conditions} is satisfied e.g. by $s_B=2$ since $r>0$.

Next, a direct calculation shows that 
\begin{equation}\label{BBB}
\begin{split}
\chi_{\varepsilon,r-1}&(k')\frac1{\varepsilon^3}\left(\hB_\alpha\left(\tfrac{\cdot-\kalpha}\varepsilon\right)\astB\hB_\beta\left(\tfrac{\cdot-\kbeta}\varepsilon\right)\astB\hbB_\gamma\left(\tfrac{\cdot-\kgamma}\varepsilon\right)\right)(k)\\
&=\begin{cases}
\eps(\hBp_\alpha *_{B_{\eps^{r-1}}(0)}\hBp_\beta*_{B_{\eps^{r-1}}(0)} \hbB^*_\gamma)(k')=\eps(\hBp_\alpha *\hBp_\beta * \hbB^*_\gamma)(k') \quad &\text{if } \; (\alpha,\beta,\gamma)\in \sigma_j\\
0 & \text{if } \; (\alpha,\beta,\gamma)\notin \sigma_j.
\end{cases}
\end{split}
\end{equation}
In detail: due to the periodicity of $\hB_\alpha$ the convolution $\astB$ can be replaced by $\ast_{\B+k_*}$ for $k_*\in\R^2$ arbitrary. This implies by the obvious change of variables that the left hand side equals
\begin{equation*}
\begin{split}
&\frac1{\varepsilon^3}\int_{\B-\kgamma}\left(\hB_\alpha\left(\frac{\cdot-\kalpha}\varepsilon\right)\ast_{\B+\kbeta}\hB_\beta\left(\frac{\cdot-\kbeta}\varepsilon\right)\right)(k-l)\,\hbB_\gamma\left(\frac{l+\kgamma}\varepsilon\right)\dd l\\
&=\varepsilon^{-1}\int_{B_{\varepsilon^{r-1}}(0)}\int_{\B+\kbeta}\hB_\alpha\left(\frac{k-\varepsilon l'-\kalpha+\kgamma-s}\varepsilon\right)\hB_\beta\left(\frac{s-\kbeta}\varepsilon\right)\hbB_\gamma(l')\dd s\dd l'\\
&=\varepsilon\int_{B_{\varepsilon^{r-1}}(0)}\int_{B_{\varepsilon^{r-1}}(0)}\hB_\alpha\left(\frac{k-(\kalpha+\kbeta-\kgamma)}\varepsilon-l'-s'\right)\hB_\beta(s')\hbB_\gamma(l')\dd s'\dd l'.
\end{split}
\end{equation*}
With the further transformation $l'+s'=:t'\in B_{2\varepsilon^{r-1}}(0)$, we infer
\begin{equation*}
=\varepsilon\int_{B_{2\varepsilon^{r-1}}(0)}\int_{B_{\varepsilon^{r-1}}(0)}\hB_\alpha\left(\frac{k-\kj-K_{\alpha,\beta,\gamma,j}}\varepsilon-t'\right)\hBp_\beta\left(t'-l'\right)\hbBp_\gamma\left(l'\right)\dd l'\dd t'.
\end{equation*}

Now recall that $k\in B_{\eps^r}(\kj)$ and that $\supp (\hBp_\alpha) \subset B_{\eps^{r-1}}(0)$. Due to $t'\in B_{2\eps^{r-1}}(0)$ and the $\eps^{-1}\Lambda^*$-periodicity of $\hB_\alpha$, the function $\hB_\alpha\left(\frac{k-\kj-K_{\alpha,\beta,\gamma,j}}\varepsilon-t'\right)$ is nonzero if and only if $K_{\alpha,\beta,\gamma,j}\in \Lambda^*$, i.e. if $(\alpha,\beta,\gamma)\in \sigma_j$, with $\sigma_j$ defined in \eqref{sigma_j}. The periodicity allows then for dropping the shift $\eps^{-1}K_{\alpha,\beta,\gamma,j}$ in the argument of $\hB_\alpha$. Moreover, for $k\in B_{\eps^r}(\kj)$ and $t'\in B_{2\eps^{r-1}}(0)$ it is $\hB_\alpha\left(\frac{k-\kj}\varepsilon-t'\right)=\hBp_\alpha\left(\frac{k-\kj}\varepsilon-t'\right)$. We get
\begin{equation*}
\begin{aligned}
&\chi_{\eps,r-1}(k')\frac1{\varepsilon^3}\left(\hB_\alpha\left(\tfrac{\cdot-\kalpha}\varepsilon\right)\astB\hB_\beta\left(\tfrac{\cdot-\kbeta}\varepsilon\right)\astB\hbB_\gamma\left(\tfrac{\cdot-\kgamma}\varepsilon\right)\right)(\kj +\eps k')\\
&=\begin{cases}
\eps\chi_{\eps,r-1}(k')\int_{B_{2\varepsilon^{r-1}}(0)}\int_{B_{\varepsilon^{r-1}}(0)}\hB_\alpha\left(k'-t'\right)\hBp_\beta\left(t'-l'\right)\hbBp_\gamma\left(l'\right)\dd l'\dd t' \quad &\text{if } \; (\alpha,\beta,\gamma)\in \sigma_j\\
0 & \text{if } \; (\alpha,\beta,\gamma)\notin \sigma_j,
\end{cases}
\end{aligned}
\end{equation*}
such that \eqref{BBB} follows.

Hence, by \eqref{vB_eq:3},\eqref{Zhe_error_from_CME}, and \eqref{BBB} we deduce that $\big(\hBp_j\big)_{j=1}^N$ satisfy the perturbed CME system
\begin{equation}\label{vB_eq:final}
\cG_j(\hBp)(k'):=\!\left(\frac12(k')^{\mathsf T}\nabla^2\omega_{n_*}(\kj)k'-\Omega\right)\!\hBp_j(k')-\hcN_j(\hBp)(k')=\hR_j(\hBp)(k'), \;\,\,\, k'\in B_{\eps^{r-1}}(0)
\end{equation}
for $j\in\{1,\dots,N\}$ and $k'\in B_{\varepsilon^{r-1}}(0)$, where we recall that (cf. \eqref{Nj} and \eqref{Theta})
\begin{equation*}\label{hNj}
\begin{split}
\hcN_j(\hBp)(k')&=\,\frac{\omega_*}2\!\!\sumabcd\,\sumabgN\!\!\Theta_{\alpha,\beta,\gamma,j}^{a,b,c,d}\left(\hBp_\alpha *_{B_{\eps^{r-1}}(0)}\hBp_\beta*_{B_{\eps^{r-1}}(0)} \hbB^*_\gamma\right)(k')\\
&=\,\frac{\omega_*}2\!\!\sum_{(\alpha,\beta,\gamma)\in \sigma_j}\!\!I_{\alpha,\beta,\gamma}^{\,j}\left(\hBp_\alpha*\hBp_\beta*\hbB^*_\gamma\right)(k'),
\end{split}
\end{equation*}
the coefficients $I_{\alpha,\beta,\gamma}^{\,j}$ being defined in \eqref{I_abg^j}. The remainder term $\hR_j(\hBp)=\hR_j\big(\hBp_1,\dots,\hBp_N\big)$ is defined via
\begin{equation}\label{R}
\hR_j(\hBp)(k'):=\frac{1}{2\omega_*}\Zhe_j(\kj+\varepsilon k'), \qquad k'\in B_{\eps^{r-1}}(0)
\end{equation}
and satisfies
\begin{equation}\label{Remainder_error}
\|\hR_j(\hBp)\|_{L^2(\varepsilon^{-1}\B)}<c\varepsilon.
\end{equation}
Notice that \eqref{vB_eq:final} is therefore an $\varepsilon$-perturbation of the CMEs in Fourier variables on the compact support $k'\in B_{\eps^{r-1}}(0)$. In what follows, we prove the existence of solutions $\hBp:=\big(\hBp_j\big)_{j=1}^N$ of \eqref{vB_eq:final} close to $\chi_{\varepsilon,r-1}\hA$, where $\hA:=\big(\hA_j\big)_{j=1}^N$ is the Fourier transform of the solution of the CMEs \eqref{CMEs_phys}. We follow the approach of \cite{DP,DW}.

To this aim, for $j\in\{1,\dots,N\}$ we define $\Aej:=\chi_{\varepsilon,r-1}\hA_j$ and write 
$$\hBp_j=\Aej+\hbj$$ 
with $\supp(\hbj)\subset B_{\varepsilon^{r-1}}(0)$. In order to expand $\cG=\big(\cG_1,\dots,\cG_N\big)$ around the vector $\Ae$ and use the Jacobian of the CMEs, we write $\cG_j$ in the real variables. Indices $R$ and $I$ denote hereafter the real and the imaginary part respectively, e.g. $A_j=A_{j,R}+\ri A_{j,I}$. We define (cf. \eqref{CMEs_phys})
\begin{equation*}
\phi_j(A):=-\frac12\nabla^\trans(\nabla^2\omega_{n_*}(\kj)\nabla A_j)-\Omega A_j-\cN_j(A),\qquad j\in\{1,\dots,N\}
\end{equation*}
so that $\big(\phi(A)\big)^\land=\cG(\hA)$, which in real variables becomes
\begin{equation*}
\Phi_j(A_R,A_I):=\begin{pmatrix}
\Real(\phi_j(A_R+\ri A_I))\\\Imag(\phi_j(A_R+\ri A_I))
\end{pmatrix},\qquad j\in\{1,\dots,N\}.
\end{equation*}
We denote its $2N\times2N$ Jacobian by $D\Phi(A_R,A_I)$, its Fourier counterpart by
\begin{equation*}
D_{\hA}\cG(\hA):=\big(D\Phi(A_R,A_I)\big)^\land,
\end{equation*}
as well as its Fourier-truncation
\begin{equation*}
\chi_{\varepsilon,r-1}D_{\hA}\cG(\Ae)=\chi_{\varepsilon,r-1}\big(D\Phi(A_R^\varepsilon,A_I^\varepsilon)\big)^\land,
\end{equation*}
with $A_R^\varepsilon:=(\Ae_R)^\lor$ and $A_I^\varepsilon:=(\Ae_I)^\lor$. Thus here $D_{\hA}\cG_j(\hA)$ is just a symbolic notation. Recalling the definition of $\cN_j$ in \eqref{Nj}, we have
\begin{equation*}
\Real(\cN_j(A))=\sum_{(\alpha,\beta,\gamma)\in \sigma_j} I_{\alpha,\beta,\gamma}^{\,j}\big(A_{\alpha,R}A_{\beta,R}A_{\gamma,R}+A_{\alpha,R}A_{\beta,I}A_{\gamma,I}+A_{\alpha,I}A_{\beta,R}A_{\gamma,I}-A_{\alpha,I}A_{\beta,I}A_{\gamma,R}\big),
\end{equation*}
\begin{equation*}
\Imag(\cN_j(A))=\sum_{(\alpha,\beta,\gamma)\in \sigma_j} I_{\alpha,\beta,\gamma}^{\,j}\big(-A_{\alpha,R}A_{\beta,R}A_{\gamma,I}+A_{\alpha,R}A_{\beta,I}A_{\gamma,R}+A_{\alpha,I}A_{\beta,R}A_{\gamma,R}+A_{\alpha,I}A_{\beta,I}A_{\gamma,I}\big).
\end{equation*}
Therefore, for $m\in \{1,\dots,N\}$
\begin{equation*}
\begin{split}
\partial_{A_{m,R}}\Real(\cN_j(A))&=\sumabN\nu_{\alpha \beta jm}^{11,R} A_{\alpha,R}A_{\beta,R}+\sumabN\nu_{\alpha \beta jm}^{11,I}A_{\alpha,I}A_{\beta,I},\\
\partial_{A_{m,I}}\Real(\cN_j(A))&=\partial_{A_{m,R}}\Imag(\cN_j(A))
=\sumabN\nu_{\alpha \beta jm}^{12}A_{\alpha,R}A_{\beta,I},\\
\partial_{A_{m,I}}\Imag(\cN_j(A))&=\sumabN\nu_{\alpha \beta jm}^{22,R}A_{\alpha,R}A_{\beta,R}+\sumabN\nu_{\alpha \beta jm}^{22,I}A_{\alpha,I}A_{\beta,I},
\end{split}
\end{equation*}
where the coefficients $\nu_{\alpha \beta jm}^{11,R},\dots,\nu_{\alpha \beta jm}^{22,I}$ are linear combinations of $I_{\alpha,\beta,m}^{\,j}$ for all $\alpha,\beta\in\{1,\dots,N\}$.
Hence we may write
\begin{equation*}
D\Phi(A_R,A_I)=L-D_A\cN(A),
\end{equation*}
where
\begin{equation*}
L:=-\frac12\begin{pmatrix}
\nabla^\trans\big(\nabla^2\omega_{n_*}(k^{(1)})\nabla\big)\Id_{2\times2} &  &  \\ 
& \ddots &  \\ 
&  & \nabla^\trans\big(\nabla^2\omega_{n_*}(k^{(N)})\nabla\big)\Id_{2\times2}
\end{pmatrix} - \Omega\Id_{2N\times2N}
\end{equation*}
and $D_A\cN(A)$ is a block matrix with the $(j,k)$-th block ($j,k\in \{1,\dots,N\}$) being
\begin{equation*}
\begin{split}
&M^{\cN}(A)_{j,m}:=\left(\begin{array}{cc}
\partial_{A_{m,R}}\Real(\cN_j(A))& \partial_{A_{m,I}}\Real(\cN_j(A))\\ 
\partial_{A_{m,R}}\Imag(\cN_j(A))& \partial_{A_{m,I}}\Imag(\cN_j(A))
\end{array}\right)\\
&=\left(\begin{array}{cc}
\sum_{\alpha,\beta=1}^N\big(\nu_{\alpha \beta jm}^{11,R}\,A_{\alpha,R}A_{\beta,R}+\nu_{\alpha \beta jm}^{11,I}\,A_{\alpha,I}A_{\beta,I}\big)& \sum_{\alpha,\beta=1}^N\nu_{\alpha \beta jm}^{12}\,A_{\alpha,R}A_{\beta,I}\\ 
\sum_{\alpha,\beta=1}^N\nu_{\alpha \beta jm}^{21}\,A_{\alpha,R}A_{\beta,I}& \sum_{\alpha,\beta=1}^N\big(\nu_{\alpha \beta jm}^{22,R}\,A_{\alpha,R}A_{\beta,R}+\nu_{\alpha \beta jm}^{22,I}\,A_{\alpha,I}A_{\beta,I}\big)
\end{array}\right).
\end{split}
\end{equation*}
In Fourier variables this rewrites as
\begin{equation}\label{DAFA_Bloch}
D_{\hA}\cG(\hA)=\hL-D_{\hA}\hcN(\hA),
\end{equation}
where $\hL$ is a block-diagonal matrix with $N$ blocks of size 2x2, where the $j$-th block is $(\tfrac12(k')^\trans\nabla^2\omega_{n_*}(\kj)k'-\Omega)\Id_{2\times2}$ and $D_{\hA}\hcN(\hA)$ is a block matrix with the $(j,m)$-th block ($j,m\in\{1,\dots,N\}$) being
\begin{equation*}
\begin{split}
&\widehat{M}^{\cN}(\hA)_{j,m}=\\
&\left(\begin{array}{cc}
\sum_{\alpha,\beta=1}^N\big(\nu_{\alpha \beta jm}^{11,R}\,\hA_{\alpha,R}*\hA_{\beta,R}+\nu_{\alpha \beta jm}^{11,I}\,\hA_{\alpha,I}*\hA_{\beta,I}\big)& \sum_{\alpha,\beta=1}^N\nu_{\alpha \beta jm}^{12}\,\hA_{\alpha,R}*\hA_{\beta,I}\\ 
\sum_{\alpha,\beta=1}^N\nu_{\alpha \beta jm}^{21}\,\hA_{\alpha,R}*\hA_{\beta,I}& \sum_{\alpha,\beta=1}^N\big(\nu_{\alpha \beta jm}^{22,R}\,\hA_{\alpha,R}*\hA_{\beta,R}+\nu_{\alpha \beta jm}^{22,I}\,\hA_{\alpha,I}*\hA_{\beta,I}\big)
\end{array}\right).
\end{split}
\end{equation*}
The action of $\hL$ is multiplicative but $\widehat{M}^{\cN}(\hA)$ acts as a convolution operator, e.g. $(\hA_{\alpha,R}*\hA_{\beta,R})(\hb_{\alpha,R})=\hA_{\alpha,R}*\hA_{\beta,R}*\hb_{\alpha,R}$. If $\hA\in L^2_{s_A}(\R^2)$ with $s_A>1$, then 
$$D_{\hA}\cG(\hA):L^2_2(\R^2)\to L^2(\R^2).$$
For $\hL$ this follows from the second order property of $L$. For $\widehat{M}^{\cN}(\hA)$ we have, e.g. 
 $$\|\hA_{\alpha,R}*\hA_{\beta,R}*\hb_{\alpha,R}\|_{L^2}\leq \|\hA_{\alpha,R}\|_{L^1}\|\hA_{\beta,R}\|_{L^1}\|\hb_{\alpha,R}\|_{L^2}\leq \|\hA_{\alpha,R}\|_{L^2_{s_A}}\|\hA_{\beta,R}\|_{L^2_{s_A}}\|\hb_{\alpha,R}\|_{L^2_2}$$
using Young's inequality for convolutions and \eqref{E:L1-L2s}.

From \eqref{vB_eq:final} and using a Taylor expansion of $\cG(\Ae+\hb)$, we deduce then the following system of equations for the error term $\hb$,
\begin{equation}\label{b_eq}
\begin{split}
\chi_{\varepsilon,r-1}D_{\hA}\cG(\hA)\,\hb&=\chi_{\varepsilon,r-1}\hR(\Ae+\hb)-\chi_{\varepsilon,r-1}\left(\cG(\Ae+\hb)-D_{\hA}\cG(\hA)\,\hb\right)\\
&=\chi_{\varepsilon,r-1}\hR(\Ae+\hb)-\chi_{\varepsilon,r-1}\left(\cG(\Ae)+\left(D_{\hA}\cG(\Ae)-D_{\hA}\cG(\hA)\right)\hb+g(\hb)\right)\\
&=:\cW(\hb),
\end{split}
\end{equation}
where $g$ is quadratic in $\hb$. Once more, we want to apply a fixed point argument to \eqref{b_eq} on a small ball around the origin in $\big(L^2_{s_B}(\R^2)\big)^N$. Hence we need to estimate the terms in $\cW$. First, using the assumption that $\big(\hA_j\big)_{j=1}^N$ solves the CMEs \eqref{CMEs_FT}, for $k'\in B_{\varepsilon^{r-1}}(0)$ we have
\begin{equation*}
\begin{split}
\cG_j(\Ae)(k')&=\left(\frac12(k')^{\mathsf T}\nabla^2\omega_{n_*}(\kj)k'-\Omega\right)\hA_j(k')-\hcN_j(\Ae)(k')\\
&=\hcN_j(\hA)(k')-\hcN_j(\Ae)(k')\\
&=\frac{\omega_*}{2}\sum_{(\alpha,\beta,\gamma)\in \sigma_j} I_{\alpha,\beta,\gamma}^{\,j}\left(\hA_\alpha\ast\hA_\beta\ast\hbA_\gamma-\Ae_\alpha\ast\Ae_\beta\ast\Ae_\gamma\right)(k').
\end{split}
\end{equation*}
Notice that the right-hand side includes terms which are double convolutions between $\Aej$ and $\haej:=\hA_j-\Aej=\big(1-\chi_{\varepsilon,r-1}\big)\hA_j$ with at least one occurrence of $\haej$. 
Since for $k'\in\R^2\setminus B_{\varepsilon^{r-1}(0)}$ there holds
\begin{equation*}\label{estimate_ae}
\begin{split}
|\haej(k')|&\leq (1+|k'|)^{s_A}|\haej(k')|\sup_{|k'|>\varepsilon^{r-1}}(1+|k'|)^{-s_A}\leq c\varepsilon^{s_A(1-r)}(1+|k'|)^{s_A}|\hA_j(k')|,
\end{split}
\end{equation*}
we have by Young's inequality for convolutions and \eqref{E:L1-L2s}
\begin{equation*}
\begin{split}
\|\hae_\alpha\ast\Ae_\beta\ast\bAegamma\|_{L^2(\R^2)}&\leq \|\hae_\alpha\|_{L^2(\R^2)}\|\Ae_\beta\|_{L^1(\R^2)}\|\Ae_\gamma\|_{L^1(\R^2)}\\
&\leq c\varepsilon^{s_A(1-r)}\|\hA_\alpha\|_{L^2_{s_A}(\R^2)}\|\hA_\beta\|_{L^2_{s_A}(\R^2)}\|\hA_\gamma\|_{L^2_{s_A}(\R^2)}
\end{split}
\end{equation*}
for $s_A>1$, and similarly one may handle all other terms, because again by \eqref{E:L1-L2s} one has $\|\haej\|_{L^1}\leq\|\haej\|_{L^2_{s_A}}\leq\|\hA_j\|_{L^2_{s_A}}$. 
Hence
\begin{equation}\label{FjAe}
\|\cG_j(\Ae)\|_{L^2(\R^2)}\leq c\!\sumabgN\varepsilon^{s_A(1-r)}\|\hA_\alpha\|_{L^2_{s_A}(\R^2)}\|\hA_\beta\|_{L^2_{s_A}(\R^2)}\|\hA_\gamma\|_{L^2_{s_A}(\R^2)}.
\end{equation}

Next, we estimate the difference of the Jacobians in \eqref{b_eq}. Since the linear part of them (cf.\eqref{DAFA_Bloch}) is the same for $\Ae$ and $\hA$, we get
\begin{equation*}
\begin{split}
&\big\|\chi_{\varepsilon,r-1}\left(D_{\hA}\cG(\Ae)-D_{\hA}\cG(\hA)\right)\hb\big\|_{L^2(\R^2)}\leq\sum_{j,m=1}^N\big\|\chi_{\varepsilon,r-1}\left((\widehat{M}^{\cN}(\Ae))_{j,m}-(\widehat{M}^{\cN}(\hA))_{j,m}\right)\hb_m\big\|_{L^2(\R^2)}\\
&\leq\sum_{\alpha,\beta,j,m=1}^N\bigg[\big\|\nu_{\alpha \beta jm}^{11,R}\left(\Ae_{\alpha,R}\ast\Ae_{\beta,R}-\hA_{\alpha,R}\ast\hA_{\beta,R}\right)\ast\hb_{m,R}\big\|_2+\big\|\nu_{\alpha \beta jm}^{11,I} \left(\Ae_{\alpha,I}\ast\Ae_{\beta,I}-\hA_{\alpha,I}\ast\hA_{\beta,I}\right)\ast\hb_{m,R}\big\|_2\\
&\quad+\big\|\nu_{\alpha \beta jm}^{12} \left(\Ae_{\alpha,R}\ast\Ae_{\beta,I}-\hA_{\alpha,R}\ast\hA_{\beta,I}\right)\ast\hb_{m,I}\big\|_2+\big\|\nu_{\alpha \beta jm}^{12} \left(\Ae_{\alpha,R}\ast\Ae_{\beta,I}-\hA_{\alpha,R}\ast\hA_{\beta,I}\right)\ast\hb_{m,R}\big\|_2\\
&\quad+\big\|\nu_{\alpha \beta jm}^{22,R} \left(\Ae_{\alpha,R}\ast\Ae_{\beta,R}-\hA_{\alpha,R}\ast\hA_{\beta,R}\right)\ast\hb_{m,I}\big\|_2+\big\|\nu_{\alpha \beta jm}^{22,I} \left(\Ae_{\alpha,I}\ast\Ae_{\beta,I}-\hA_{\alpha,I}\ast\hA_{\beta,I}\right)\ast\hb_{m,I}\big\|_2\bigg].
\end{split}
\end{equation*}
We see that all terms are of same kind and moreover are linear in $\hb$ and either linear or quadratic in $\hae:=\Ae-\hA$. Applying then estimates similar to the ones used to deduce \eqref{FjAe}, we infer
\begin{equation}\label{Jacobians:estimate}
\big\|\chi_{\varepsilon,r-1}\left(D_{\hA}\cG(\Ae)-D_{\hA}\cG(\hA)\right)\hb\big\|_{L^2(\R^2)}\leq c\varepsilon^{s_A(1-r)}\!\!\sumabgN\!\|\hA_\alpha\|_{L^2_{s_A}(\R^2)}\|\hA_\beta\|_{L^2_{s_A}(\R^2)}\|\hb\|_{L^2_{s_B}(\R^2)}
\end{equation}
if $s_A>1$.
Combining \eqref{Remainder_error} (where note that the dependence of $c$ on $\|\hB\|_{L^2_{s_B}}$ - and in turn on $\|\hb\|_{L^2_{s_B}}$ - is polynomial), \eqref{FjAe} and \eqref{Jacobians:estimate}, we can thus conclude from \eqref{b_eq} that 
\begin{equation}\label{W}
\|\cW(\hb)\|_{L^2(\R^2)}\leq c_A\left(\varepsilon+\varepsilon^{s_A(1-r)}+(\varepsilon+\varepsilon^{s_A(1-r)})\|\hb\|_{L^2_{s_B}}+\|\hb\|_{L^2_{s_B}}^2+\|\hb\|_{L^2_{s_B}}^3\right).
\end{equation}
In order to solve \eqref{b_eq} for $\hb$ by a fixed point argument, we would need the invertibility of the Jacobian $D_{\hA}\cG(\hA):L^2_2(\R^2)\to L^2(\R^2)$. Indeed, from this it would follow that $\chi_{\varepsilon,r-1}D_{\hA}\cG(\hA)\chi_{\varepsilon,r-1}:L^2_2(\R^2)\to L^2(\R^2)$  is uniformly invertible, see \cite[Theorem IV.3.17]{Kato}. However, this is not the case, because of the presence of the three zero eigenvalues of $D\Phi(A_R,A_I)$ produced by the two spacial shift invariances and  the complex phase invariance of the CMEs \eqref{CMEs_phys}. To eliminate the zero eigenvalues, we assume the non-degeneracy of $A$, see Definition \ref{D:nondegen}, and work (in Fourier variables) in a subspace of $L^2_2(\R^2)$ in which the invariances do not hold. A natural subspace is the one generated by the $\PT$-symmetry, i.e. we work with $\hA$ and $\hb$ such that
\begin{equation*}
A(-x)=\overline{A(x)},\qquad\mbox{and}\qquad b(-x)=\overline{b(x)}
\end{equation*}
or equivalently, 
$$\hA:\R^2\to \R^N, \quad \hb:\R^2\to \R^N.$$

Under the non-degeneracy condition, the Jacobian $D_{\hA}\cG(\hA)$ is invertible in such a subspace and we can apply a fixed point argument to equation \eqref{b_eq}. In detail, assuming $\hA:\R^2\to \R^N$, we look for a solution of 
\begin{equation}\label{b:eq^-1}
\hb=\big(\chi_{\varepsilon,r-1}D_{\hA}\cG(\hA)\big)^{-1}\cW(\hb)
\end{equation}
in the space 
\begin{equation*}
\begin{split}
\boldsymbol{L}^2_{2,\text{sym}}:
&=\big\{\hb\in L^2_{2}(\R^2)^{N}\,|\,\supp(\hb)\subset B_{\varepsilon^{r-1}},\,\hb(\cdot)\,\,\mbox{is real}\big\}.
\end{split}
\end{equation*}
However, we need to make sure that the $\PT$-symmetry is preserved by the maps $\cW$ and $\big(\chi_{\varepsilon,r-1}D_{\hA}\cG(\hA)\big)^{-1}$. This is proved at the end of the section. We address now the application of the fixed point argument to \eqref{b:eq^-1} in the ball
\begin{equation*}
\cB^{2,\text{sym}}_{c\varepsilon^\rho}:=\big\{\hb\in\boldsymbol{L}^2_{2,\text{sym}}\,|\,\|\hb\|_{L^2_2}\leq c\varepsilon^\rho\big\},
\end{equation*}
where $c,\rho>0$ have to be found. For $\hb\in\cB^{2,\text{sym}}_{c\varepsilon^\rho}$ we deduce from \eqref{W} that
\begin{equation*}
\|\cW(\hb)\|_{L^2(\R^2)}\leq c_A\left((\varepsilon+\varepsilon^{s_A(1-r)})(1+c\varepsilon^\rho)+c^2\varepsilon^{2\rho}+c^3\varepsilon^{3\rho}\right).
\end{equation*}
Choosing
\begin{equation}\label{rho_c}
\rho=\min\{1,s_A(1-r)\}\qquad\mbox{and}\qquad c=2c_A,
\end{equation}
we infer $\cW(\hb)\in\cB^{2,\text{sym}}_{c\varepsilon^\rho}$. Moreover, the map $\hb\mapsto\cW(\hb)$ is contractive in such a ball. Indeed, for $\hb_1,\hb_2\in\cB^{2,\text{sym}}_{c\varepsilon^\rho}$,
\begin{equation*}
\begin{split}
\|\cW(\hb^{(1)})-\cW(\hb^{(2)})\|_{L^2(\R^2)}&\leq\|g(\hb^{(1)})-g(\hb^{(2)})\|_2+\|\chi_{\varepsilon,r-1}\big(\hR(\Ae+\hb^{(1)})-\hR(\Ae+\hb^{(2)})\big)\|_2\\
&\quad+\left\|\chi_{\varepsilon,r-1}\big(D_{\hA}\cG(\Ae)-D_{\hA}\cG(\hA)\big)(\hb^{(1)}-\hb^{(2)})\right\|_2\\
&\leq c(\varepsilon+\varepsilon^{s_A(1-r)})\|\hb^{(1)}-\hb^{(2)}\|_2,
\end{split}
\end{equation*}
because of \eqref{Remainder_error}, \eqref{Jacobians:estimate} and of the quadratic nature of $g$. Since $\chi_{\varepsilon,r-1}D_{\hA}\cG(\hA):\boldsymbol{L}^2_{2,\text{sym}}\to \boldsymbol{L}^2(\R^2)$ is boundedly invertible, the existence of a $\PT$-symmetric solution $\hb\in \boldsymbol{L}^2_{2,\text{sym}}$ of equation \eqref{b_eq} so that
\begin{equation}\label{b:estimate}
\|\hb\|_{L^2_2}\leq 2c_A\varepsilon^{\min\{1,s_A(1-r)\}}
\end{equation}
follows from the Banach fixed point theorem. Notice that the optimal estimate $\|\hb\|_{L^2_2}\leq 2c_A\varepsilon$ can be obtained for any $s_A>1$ as $r\in (0,\tfrac12\big]$ can be chosen arbitrarily small.

To conclude the argument, it remains to be proved that the $\PT$-symmetry is preserved by the maps $\cW$ and $\big(\chi_{\varepsilon,r-1}D_{\hA}\cG(\hA)\big)^{-1}$, i.e. that 
they map real valued functions $\hb$ to real valued functions. First, note that $u$ is $\PT$-symmetric if and only if $\tu(\cdot,k)$ is so for almost all $k\in\B$. Hence, we can check the inheritance of the property in the Bloch setting. We now need to make sure that all the components in which we decomposed our solution, and which now depend just on $\hb$, inherit the $\PT$-symmetry. If so, then the residual term $\hR$ in \eqref{R} is real. To complete this step, analyzing the equations that $\tw_0$, $\tw_R$ and $\hC$ have to fulfill, namely \eqref{w0_eq}, \eqref{wR_eq} and \eqref{C_eq}, we see that we just need that our operator $L_k$, the projections $\eQk$ and $\Qke$, and the nonlinear map $\tF$ commute with $\PT$. In detail:
\begin{itemize}
	\item $L_k$ is $\PT$-symmetric since it involves only derivatives of order $2$ and $0$ and by assumption (A6).
	\item By the simpleness assumption (A7), the Bloch eigenfunctions $p_{n_*}(\cdot,k)$ are $\PT$-symmetric for almost all $k\in\B$, see \eqref{E:sym-minus-x}. This, together with (A6), implies that the projections $\ePk,\eQk,\Pke,\Qke$ commute with $\PT$. E.g.,
	\begin{equation*}
	\begin{split}
	\Pke\big(\PT(\tu)\big)(x,k)&=\sum_{j=1}^N\langle\PT(\tu)(\cdot,k),\epsilon p_{n_*}(\cdot,k)\rangle p_{n_*}(x,k)\\
	&=\sum_{j=1}^N\langle\PT(\tu)(\cdot,k),\epsilon \PT(p_{n_*})(\cdot,k)\rangle\PT(p_{n_*})(x,k)\\
	&=\PT\bigg(\sum_{j=1}^N\overline{\langle\PT(\tu)(\cdot,k),\epsilon \PT(p_{n_*})(\cdot,k)\rangle}\,p_{n_*}(x,k)\bigg)\\
	&=\PT\bigg(\sum_{j=1}^N\langle\tu(\cdot,k),\epsilon p_{n_*}(\cdot,k)\rangle p_{n_*}(x,k)\bigg)=\PT(\Pke\tu)(x,k),
	\end{split}
	\end{equation*}
	since
	\begin{equation*}
	\begin{split}
	\int_{\R^2}\overline{\tu}(-x,k)\cdot\epsilon(x)p_{n_*}(-x,k)\dd x&=\int_{\R^2}\overline{\tu}(y,k)\cdot\epsilon(-y)p_{n_*}(y,k)\dd y\\
	&=\overline{\int_{\R^2}\tu(y,k)\cdot\epsilon(y)\overline{p_{n_*}(y,k)}\dd y}=\overline{\langle\tu(\cdot,k),\epsilon p_{n_*}(\cdot,k)\rangle}.
	\end{split}
	\end{equation*}
	\item $\tF$ only involves convolutions in $\B$ (cf. \eqref{F_Bloch}), hence the $\PT$-symmetry is trivially preserved using the evenness of $\chi^{(3)}$, see assumption (A6).
\end{itemize}

Consequently, if we start with a $\PT$-symmetric solution $\hA$ of the CMEs \eqref{CMEs_phys} and consider $\hb\in\cB^{2,\text{sym}}_{c\varepsilon^\rho}$, then all components $\hB,\hC,\tw_R,\tw_0$ inherit the same symmetry. This implies that the term $\hR(\hB)$ is real. Moreover, exploiting the $\PT$-symmetry of the mode $p_{n_*}(\cdot,k)$, see \eqref{E:sym-minus-x}, it is easy to show that the coefficients $\Theta_{\alpha,\beta,\gamma,j}^{a,b,c,d}$ defined in \eqref{Theta} (or equivalently the coefficients $I_{\alpha,\beta,\gamma}^{\,j}$ defined in \eqref{I_abg^j}) are real. Hence also $\cG(\hB)$ and $D_{\hA}\cG(\hA)$ are real. We are able to conclude that $\big(\chi_{\varepsilon,r-1}D_{\hA}\cG(\hA)\big)^{-1}\cW:\cB^{2,\text{sym}}_{c\varepsilon^\rho}\to\cB^{2,\text{sym}}_{c\varepsilon^\rho}$ with the former choices of $\rho$ and $c$ in \eqref{rho_c}, and therefore we find a real solution $\hb$ to \eqref{b:eq^-1} satisfying \eqref{b:estimate}.
\vskip0.2truecm
This shows that the function $\tu$ in \eqref{u} constructed along Sec. \ref{Section_w0}-\ref{Section_B} is an $H^2$ $\PT$-symmetric solution of \eqref{eq_Bloch}.

\subsection{Approximation Error of \texorpdfstring{$\boldsymbol{u}_\text{ans}$}{u_ans}}\label{Section_approx}
In order to complete the proof of Theorem \ref{Thm_main}, we need to show that the initial ansatz $u_\text{ans}$ defined in \eqref{ansatz_phys_var} is actually a good approximation of the solution $u$ of \eqref{eq} which we constructed in Sec. \ref{Section_w0}-\ref{Section_B}. Recalling that
\begin{equation*}
\tu(x,k)=\tv(x,k)+\tw(x,k)=\tvB(x,k)+\tvC(x,k)+\tw_0(x,k)+\tw_R(x,k),
\end{equation*}
and in virtue of the estimates \eqref{w0_estimate}, \eqref{wR_estimate} and \eqref{vC_estimate}, we have
\begin{equation}\label{final_1}
\|\tu_\text{ans}-\tu\|_{\cX_2}\leq \|\tu_\text{ans}-\tvB\|_{\cX_2}+c_A(\varepsilon^{2-2r}+\varepsilon^2+\varepsilon^3)\leq \|\tu_\text{ans}-\tvB\|_{\cX_2}+c\varepsilon^{2-2r}.
\end{equation}
We split now the first term as follows (cf. \eqref{ansatz_Bloch_var}):
\begin{equation*}
\begin{split}
\tu_\text{ans}&(x,k)-\tvB(x,k)=\frac1\varepsilon\sum_{j=1}^N\left\{\sum_{K\in\Lambda^*}\hA_j\left(\frac{k-\kj+K}\varepsilon\right)p_{n_*}(x,\kj)e^{\ri K\cdot x}-\hB_j\left(\frac{k-\kj}\varepsilon\right)p_{n_*}(x,k)\right\}\\
&=\frac1\varepsilon\sum_{j=1}^N\bigg\{-\hb_j\left(\frac{k-\kj}\varepsilon\right)p_{n_*}(x,k)+(\chi_{\varepsilon,r-1}\hA_j)\left(\frac{k-\kj}\varepsilon\right)\left(p_{n_*}(x,\kj)-p_{n_*}(x,k)\right)\\
&\quad+(\left(1-\chi_{\varepsilon,r-1}\right)\hA_j)\left(\frac{k-\kj}\varepsilon\right)p_{n_*}(x,\kj)+\sum_{0\not=K\in\Lambda^*}\hA_j\left(\frac{k-\kj+K}\varepsilon\right)p_{n_*}(x,\kj)e^{\ri K\cdot x}\bigg\}
\end{split}
\end{equation*}
and we estimate term by term. First, by \eqref{b:estimate} one gets
\begin{equation}\label{final_bj}
\left\|\hb_j\left(\tfrac{\cdot\,-\kj}\varepsilon\right)p_{n_*}\right\|_{\cX_2}^2\leq c\int_\B\left|\hbj\left(\tfrac{k-\kj}\varepsilon\right)\right|^2\dd k=c\varepsilon^2\|\hbj\|_{L^2(\varepsilon^{-1}\B)}^2\leq c\,\varepsilon^{2\left(1+\min\{1,s_A(1-r)\}\right)}.
\end{equation}
Second, using the Lipschitz continuity of the map $k\mapsto p_{n_*}(\cdot,k)\in H^2(\cQ)$ for $k$ in a vicinity of $\kj$ given by Lemma \ref{eigf_Lip_H2p}, we get
\begin{equation}\label{final_Aj_1}
\begin{split}
\int_\B\bigg|(\chi_{\varepsilon,r-1}\hA_j)&\left(\tfrac{k-\kj}\varepsilon\right)\bigg|^2\|p_{n_*}(\cdot,k)-p_{n_*}(\cdot,\kj)\|_{H^2(\cQ)}^2\dd k\\
&\leq c\varepsilon^2 \int_{B_{\varepsilon^r}(\kj)}\left|\hA_j\left(\tfrac{k-\kj}\varepsilon\right)\right|^2\left|\tfrac{k-\kj}\varepsilon\right|^2\dd k\\
&\leq c\varepsilon^4\int_{B_{\varepsilon^{r-1}}(0)}|z|^2|\hA_j(z)|^2\dd z = c\varepsilon^4\|\hA_j\|_{L^2_1}^2\leq c\varepsilon^4\|\hA_j\|_{L^2_{s_A}}^2.
\end{split}
\end{equation}
Next,
\begin{equation}\label{final_Aj_2}
\begin{split}
\int_\B\big|(1-\chi_{\varepsilon,r-1}\hA_j)&\big(\tfrac{k-\kj}\varepsilon\big)\big|^2\|p_{n_*}(\cdot,\kj)\|_{H^2(\cQ)}^2\dd k\\
&\leq c\varepsilon^2\sup_{z\not\in B_{\varepsilon^{r-1}}(0)}(1+|z|)^{-2s_A}\int_{\R^2\setminus B_{\varepsilon^{r-1}}(0)}(1+|z|)^{2s_A}|\hA_j(z)|^2\dd z\\
&\leq c\varepsilon^2(1+\varepsilon^{r-1})^{-2s_A}\|\hA_j\|_{L^2_{s_A}}^2\leq c\varepsilon^{2(1+s_A(1-r))}\|\hA_j\|_{L^2_{s_A}}^2.
\end{split}
\end{equation}
Finally we consider the term involving the translated Brillouin zones:
\begin{equation}\label{final_Aj_3}
\begin{split}
\sum_{0\not=K\in\Lambda^*}&\int_\B\big|\hA_j\left(\tfrac{k-\kj+K}\varepsilon\right)\big|^2\big\|p_{n_*}(\cdot,\kj)e^{\ri K\cdot}\big\|_{H^2(\cQ)}^2\dd k\\
&\leq c\varepsilon^2\!\sup_{0\not=\tilde{K}\in\Lambda^*}\,\sup_{z\in\varepsilon^{-1}\left(\B-\kj+\tilde{K}\right)}\!(1+|z|)^{-2s_A}\!\!\sum_{0\not=K\in\Lambda^*}\int_{\varepsilon^{-1}\left(\B-\kj+K\right)}\!\!(1+|z|)^{2s_A}|\hA_j(z)|^2\dd z\\
&\leq c\varepsilon^2(1+\varepsilon^{-1})^{-2s_A}\int_{\R^2\setminus\varepsilon^{-1}(\B-\kj)}(1+|z|)^{2s_A}|\hA_j(z)|^2\dd z\\
&\leq c\varepsilon^{2(1+s_A)}\|\hA_j\|_{L^2_{s_A}}^2.
\end{split}
\end{equation} 
By combining estimates \eqref{final_1}-\eqref{final_Aj_3} we arrive at
\begin{equation*}
\begin{split}
\|\tu_\text{ans}-\tu\|_{\cX_2}&\leq\frac{c}\varepsilon \big(\varepsilon^{1+\min\{1,s_A(1-r)\}}+\big(\varepsilon^2+\varepsilon^{1+s_A(1-r)}+\varepsilon^{1+s_A}\big)\|\hA_j\|_{L^2_{s_A}}+\varepsilon^{3-2r}\big)\\
&\leq c\,\varepsilon^{\min\{1,s_A(1-r)\}}
\end{split}
\end{equation*}
since $r\in(0,\tfrac12]$. Because $s_A>1$, if we take $r\in \Big(0,1-\tfrac1{s_A}\Big]$ then $s_A(1-r)\geq 1$ and hence
$$\|\tu_\text{ans}-\tu\|_{\cX_2}\leq c\varepsilon.$$
The proof is thus complete recalling that the Bloch transform is an isomorphism between $\cX_2$ and $H^2(\R^2)$, see \eqref{Bloch_isomorphismum}.

\appendix
\section{Appendix}

In this last section we collect some auxiliary results needed throughout the paper. First, retracing the strategy of its standard proof (see e.g. \cite[Theorem 23.17]{Schw}), we prove a Helmholtz decomposition adapted to our ``shifted'' operator $\nabla'_k$. This is employed in Sec. \ref{S:spec-H} for the well-posedness of the eigenvalue problem in $L^2_\#(\cQ)$. Second, we address the regularity of the eigenfunctions $p_j(\cdot,k)$ and the Lipschitz continuity of the maps $k\mapsto\omega_j(k)$, $k\mapsto q_j(\cdot,k)$, and $k\mapsto p_j(\cdot,k)$, i.e. of the eigenvalue and eigenfunctions of the Bloch eigenvalue problems \eqref{E:vj-eq-L2} and \eqref{E:ev-eq-weak-uj}. The Lipschitz continuity is exploited in the nonlinear estimates of Sec. \ref{Section_approx}.

\subsection{Helmholtz Decomposition.}

We note first that $H^1(\Omega,\C)=\{f\in L^2(\Omega,\C)\,|\,\nabla'_kf\in L^2(\Omega,\C^3)\}$ for any $k\in \R^2$ and any measurable $\Omega \subset \R^2$. 
We also define 
$$H({\rm curl},\Omega):=\{v\in L^2(\Omega,\C^3): \nabla'\times v \in L^2(\Omega,\C^3)\}.$$

\begin{lem}\label{L:helmh}
	Let $\Omega\subset \R^2$ be a bounded domain, $\kappa\in\R\setminus\{0\}$ and $k\in \R^2$. Then
	$$H({\rm curl},\Omega)=W_k\oplus Z_k,$$
	where
	$$W_k:=\left\{w\in H({\rm curl},\Omega)\,\big|\,\int_\Omega w\cdot\overline{\nabla'_kf}=0,\;\forall f\in H^1(\Omega,\C)\right\}$$
	and
	$$Z_k:=\{g\in L^2(\Omega,\C^3)\,|\,\exists\,\psi\in H^1(\Omega,\C)\,:\,g=\nabla'_k\psi\}$$
	are closed subspaces.
\end{lem}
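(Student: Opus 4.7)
The plan is a standard Helmholtz-type argument adapted to the shifted differential operator $\nabla'_k$, with $\kappa\neq 0$ doing the work of making $Z_k$ closed.

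\textbf{Step 1: $Z_k$ is closed in $L^2(\Omega,\C^3)$.} Suppose $\nabla'_k\psi_n\to g$ in $L^2(\Omega,\C^3)$. The third component gives $i\kappa\psi_n\to g_3$ in $L^2(\Omega,\C)$, and since $\kappa\neq 0$ this yields $\psi_n\to\psi:=g_3/(i\kappa)$ in $L^2(\Omega,\C)$. Then from the convergence of the first two components, $\partial_j\psi_n=(\nabla'_k\psi_n)_j-ik_j\psi_n\to g_j-ik_j\psi$ in $L^2$ for $j=1,2$. Hence $\psi\in H^1(\Omega,\C)$ and $\nabla'_k\psi=g$, so $g\in Z_k$. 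This is the only place where $\kappa\neq 0$ is used.

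\textbf{Step 2: $Z_k\subset H(\mathrm{curl},\Omega)$.} For $\psi\in H^1(\Omega,\C)$ write $\nabla'_k\psi=\nabla'\psi+ik\psi$ where $k=(k_1,k_2,0)^\trans$ is constant. Using the product rule,
\begin{equation*}
\nabla'\times\nabla'_k\psi=\nabla'\times\nabla'\psi+\nabla'\times(ik\psi)=i\,\nabla'\psi\times k,
\end{equation*}
because $\nabla'\times\nabla'\psi=0$ (mixed partials commute and $i\kappa$ is a constant) and $k$ is constant. Since $\psi\in H^1(\Omega,\C)$ implies $\nabla'\psi\in L^2(\Omega,\C^3)$, we obtain $\nabla'\times\nabla'_k\psi\in L^2(\Omega,\C^3)$, i.e. $\nabla'_k\psi\in H(\mathrm{curl},\Omega)$.

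\textbf{Step 3: Direct sum.} If $w\in W_k\cap Z_k$, write $w=\nabla'_k\psi$ and use $w$ itself as the test function in the defining orthogonality of $W_k$: $0=\int_\Omega w\cdot\overline{\nabla'_k\psi}=\|w\|_{L^2(\Omega)}^2$, hence $w=0$. So $W_k\cap Z_k=\{0\}$ and the sum is direct. Closedness of $W_k$ in $H(\mathrm{curl},\Omega)$ follows from the continuity of the defining pairings.

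\textbf{Step 4: Decomposition of every $u\in H(\mathrm{curl},\Omega)$.} Since $Z_k$ is a closed subspace of $L^2(\Omega,\C^3)$ by Step 1, the $L^2$-orthogonal projection onto $Z_k$ produces a unique $z\in Z_k$ with $u-z\perp_{L^2}Z_k$. Set $w:=u-z$. By Step 2, $z\in H(\mathrm{curl},\Omega)$, so $w=u-z\in H(\mathrm{curl},\Omega)$ as well, and by construction $\int_\Omega w\cdot\overline{\nabla'_kf}=0$ for all $f\in H^1(\Omega,\C)$, i.e. $w\in W_k$. This yields the decomposition $u=w+z$ and completes the proof.

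The main (mild) obstacle is Step 1: without $\kappa\neq 0$ the third component of $\nabla'_k$ is purely differential and $Z_k$ fails to be closed in $L^2$, so the $L^2$-projection argument breaks down. The rest of the argument is routine and mirrors the classical Helmholtz decomposition on bounded domains, see e.g.\ \cite{Schw}.
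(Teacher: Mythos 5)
Your proof is correct. The route differs from the paper's in one substantive place: to produce the splitting of a given $u\in H({\rm curl},\Omega)$, the paper introduces the sesquilinear form $S_k(\psi,\varphi)=\int_\Omega\nabla'_k\psi\cdot\overline{\nabla'_k\varphi}$, proves it is coercive on $H^1(\Omega,\C)$ (this is where $\kappa\neq 0$ enters there, via the choice $\varepsilon\in\bigl(\tfrac{|k|^2}{|\kappa|^2+|k|^2},1\bigr)$), and applies Lax--Milgram to find the potential $\psi$ with $\int_\Omega\nabla'_k\psi\cdot\overline{\nabla'_k\varphi}=\int_\Omega u\cdot\overline{\nabla'_k\varphi}$ for all $\varphi$. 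You instead exploit the fact you already established in Step 1, namely that $Z_k$ is closed in $L^2(\Omega,\C^3)$ (your component-wise argument using $\ri\kappa\psi_n\to g_3$ is essentially the paper's closedness argument, phrased with the $H^1$-step made implicit), and then invoke the Hilbert projection theorem in $L^2$ to get $z\in Z_k$ and $w=u-z\perp_{L^2}Z_k$; your Step 2 check that $Z_k\subset H({\rm curl},\Omega)$ then makes $w$ an element of $H({\rm curl},\Omega)$, hence of $W_k$. The two constructions yield the same decomposition (the Lax--Milgram solution is exactly the minimizer realizing the $L^2$-projection onto $Z_k$), but your version is more economical: it avoids the coercivity computation and Lax--Milgram altogether, reusing the closedness of $Z_k$ which the lemma requires anyway, while the paper's variational formulation has the small advantage of explicitly exhibiting the potential $\psi$ as the solution of a weak Neumann-type problem. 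Both arguments use $\kappa\neq 0$ in an essential and equivalent way, through the bound $\|\psi\|_{L^2}\leq|\kappa|^{-1}\|\nabla'_k\psi\|_{L^2}$.
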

\begin{proof}
	Notice that $W_k$ and $Z_k$ are by definition orthogonal in $L^2(\Omega,\C^3)$ and $Z_k\subset H({\rm curl},\Omega)$ since $\nabla'_k\times(\nabla'_k\psi)=0$ for all $\psi\in H^1(\Omega,\C)$.
	
	\textbf{Step 1}: \textit{$W_k$ and $Z_k$ are closed}.
	
	Let $(w_j)_j\subset W_k$, i.e. $\int_\Omega w_j\cdot\overline{\nabla'_kf}=0$  for any $j\in\N$, and assume $w_j\to w$ in $H({\rm curl},\Omega)$. Then
	\begin{equation*}
	\left|\int_\Omega w_j\cdot\overline{\nabla'_kf}-\int_\Omega w\cdot\overline{\nabla'_kf}\,\right|\leq \int_\Omega|w_j-w||\nabla'_kf|\leq\|w_j-w\|_2\|\nabla'_kf\|_2\to 0,
	\end{equation*}
	therefore $w\in W_k$.
	
	Let now $(g_j)_j\subset Z_k$ be such that $g_j\to g$ in $H({\rm curl},\Omega)$. Then $g_j=\nabla'_k\psi_j$ with $\psi_j\in H^1(\Omega,\C)$ for any $j\in\N$ and the sequence $(\nabla'_k\psi_j)_j$ is Cauchy in the $L^2$-norm. Noticing that
	$$\|\nabla'_k\psi_j\|^2_2=\|\pa_1\psi_j+\ri k_1\psi_j\|^2_2+\|\pa_2\psi_j+\ri k_2\psi_j\|^2_2+|\kappa|^2\|\psi_j\|^2_2,$$
	one immediately infers that also $(\psi_j)_j$ is Cauchy in $L^2(\Omega,\C)$ and since $\|\psi_j\|_{H^1}^2\leq \|\nabla'_k \psi_j\|_{L^2}^2+c(k,\kappa)\|\psi_j\|_{L^2}^2$, the sequence $(\psi_j)_j$ is Cauchy also in $H^1(\Omega,\C)$. Hence there exists $\psi\in H^1(\Omega,\C)$ such that $\psi_j\to\psi$ in $H^1(\Omega,\C)$, such that $g_j=\nabla'_k\psi_j\to\nabla'_k\psi$ in $L^2(\Omega,\C^3)$. By the uniqueness of the limit we deduce $g=\nabla'_k\psi$, hence $g\in Z_k$.
	
	\textbf{Step 2}: \textit{Decomposition}.
	
	Let $v\in H({\rm curl},\Omega)$ and introduce $\mu_v\in(H^1(\Omega,\C))'$ via 
	\begin{equation*}
	\mu_v(\varphi):=\int_\Omega v\cdot\overline{\nabla'_k\varphi}\quad\ \mbox{for all}\,\,\varphi\in H^1(\Omega,\C),
	\end{equation*}
	as well as the sesquilinear form $S_k: H^1(\Omega,\C)\times H^1(\Omega,\C)\to\C$ via
	\begin{equation*}
	\begin{split}
	S_k(\psi,\varphi):&=\int_\Omega\nabla'_k\psi\cdot\overline{\nabla'_k\varphi}\\
	&=\int_\Omega\nabla\psi\cdot\overline{\nabla\varphi}+(\kappa^2+k_1^2+k_2^2)\int_\Omega\psi\overline{\varphi}-\ri\int_\Omega\nabla\psi\cdot k\overline{\varphi}+\ri\int_\Omega\overline{\nabla\varphi}\cdot k\psi\,,
	\end{split}
	\end{equation*}
	which is clearly continuous in $H^1(\Omega,\C)$. We now prove that $S_k$ is also coercive in $H^1(\Omega,\C)$:
	\begin{equation}\label{coercivity'k}
	\begin{split}
	S_k(\psi,\psi)&=\int_\Omega|\nabla\psi|^2+(\kappa^2+k_1^2+k_2^2)\int_\Omega|\psi|^2+2\,\text{Im}\bigg[\int_\Omega\nabla\psi\cdot k\overline{\psi}\bigg]\\
	&\geq(1-\varepsilon)\int_\Omega|\nabla\psi|^2+\big(\kappa^2+(1-\tfrac1{\varepsilon})(k_1^2+k_2^2)\big)\int_\Omega|\psi|^2.
	\end{split}
	\end{equation}
	If we choose $\varepsilon\in\left(\frac{|k|^2}{|\kappa|^2+|k|^2},1\right)$, which is nonempty since $\kappa\not=0$, both constants in \eqref{coercivity'k} are positive and the sesquilinear form is coercive in $H^1(\Omega,\C)$. By the theorem of Lax-Milgram we then find $\psi\in H^1(\Omega,\C)$ such that
	\begin{equation*}
	\int_\Omega\nabla'_k\psi\cdot\overline{\nabla'_k\varphi}=\int_\Omega v\cdot\overline{\nabla'_k\varphi}\quad\ \mbox{for all}\,\,\varphi\in H^1(\Omega,\C).
	\end{equation*}
	This means that $\nabla'_k\psi=:g\in L^2(\Omega,\C^3)$ and, being a gradient field, also $g\in H({\rm curl},\Omega)$. Hence, $w:=v-g\in W_k$ since $w\in L^2(\Omega,\C^3)$ and
	\begin{equation*}
	\begin{split}
	\int_\Omega w\cdot\overline{\nabla'_kf}&=\int_\Omega v\cdot\overline{\nabla'_kf}-\int_\Omega g\cdot\overline{\nabla'_kf}=\int_\Omega v\cdot\overline{\nabla'_kf}-\int_\Omega \nabla'_k\psi\cdot\overline{\nabla'_kf}=0.
	\end{split}
	\end{equation*}	
\end{proof}

\subsection{Regularity of \texorpdfstring{$\boldsymbol{p_j(\cdot,k)}$}{pj(.,k)} and of the maps \texorpdfstring{$\boldsymbol{k\mapsto\omega^2_j(k)}$}{k->omegaj(k)} and \texorpdfstring{$\boldsymbol{k\mapsto p_j(\cdot,k)}$}{k->pj(.,k)}}

We prove here some regularity results for the Bloch eigenvalues $(\omega^2_j(k))_j$, $k\in\B$, and eigenfunctions $(q_j(\cdot,k))_j$ and $(p_j(\cdot,k))_j$ (for problems \eqref{E:ev-eq-weak-vj} and \eqref{E:ev-eq-weak-uj}, respectively) described in Sec. \ref{S:spec-H}-\ref{S:spec-E}. In particular, we aim to show that the choice of our potential by (A1) and (A6), i.e. $0<\epsilon\in W^{2,\infty}(\cQ)$, $\Lambda$-periodic and with $\epsilon^{-1}\in L^\infty(\cQ)$, is sufficient to have for each $j\in \N$
\begin{enumerate}[a)]
	\item $\sup_{k\in\B}\|p_j(\cdot,k)\|_{H^2(\cQ)}<\infty$\; and\; $\sup_{k\in\B}\|p_j(\cdot,k)\|_{W^{2,\infty}(\cQ)}<\infty$\,;
	\item the map $\mathbb K\ni k\mapsto p_j(\cdot,k)\in H^2_\#(\cQ)$ is Lipschitz continuous, provided $\omega_j(k)$ is simple for all $k\in\mathbb K\subset\B$. \label{key2}
\end{enumerate}
To this aim, several lemmas will be needed. In the whole section, in addition to the notation introduced in Sec. \ref{Section:Linear}, we denote $\|\cdot\|_{L^2_\#(\cQ)}$ by $\|\cdot\|_2$. Our method of proof is inspired by that in \cite{CV97}.

\begin{lem}\label{eigv_Lip}
	The map $\B\ni k\mapsto\omega^2_j(k)\in [0,\infty)$ is Lipschitz continuous.
\end{lem}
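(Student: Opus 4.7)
The plan is to prove Lipschitz continuity via a symmetric min-max comparison, exploiting the variational characterization
$$\omega_j^2(k) = \min_{\substack{W \subset V_k,\, \dim W = j}} \max_{v \in W \setminus \{0\}} \frac{a_k(v,v)}{\|v\|_{L^2(\cQ)}^2}$$
derived from \eqref{E:ev-eq-weak-vj}. The main obstacle is that the form domain $V_k$ (see \eqref{DomainVk}) depends on $k$ through the divergence-free constraint $\nabla_k'\cdot v = 0$, so test subspaces cannot be shared across different $k$'s. I plan to deal with this by projecting test subspaces from $V_k$ into $V_{k'}$ via the Helmholtz decomposition of Lemma~\ref{L:helmh}, applied in its natural periodic variant (the same proof works on $H_\#({\rm curl},\cQ)$ with test functions in $H^1_\#(\cQ)$, since the coercivity of $S_k$ only uses $\kappa \neq 0$).

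Fix $k, k' \in \B$ and set $\delta := k' - k$, $\Delta := (\delta_1, \delta_2, 0)^\trans$. From the algebraic identity $\nabla_{k'}'\times v = \nabla_k'\times v + i\Delta\times v$ and assumption (A5), a direct expansion of the sesquilinear form gives
$$|a_{k'}(v,v) - a_k(v,v)| \leq C|\delta|\sqrt{a_k(v,v)}\,\|v\|_{L^2(\cQ)} + C|\delta|^2\|v\|_{L^2(\cQ)}^2,$$
with $C$ depending only on $\|\epsilon^{-1}\|_\infty$. This controls how the Rayleigh quotient changes with $k$ for a \emph{fixed} test function, and is the easy half of the argument.

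Given $v \in V_k$, I would decompose $v = v_\perp + \nabla_{k'}'\psi$ according to the periodic Helmholtz splitting at $k'$, so that $v_\perp \in V_{k'}$ and $\psi \in H^1_\#(\cQ)$. Because $\nabla_k'\cdot v = 0$, one has $\nabla_{k'}'\cdot v = i\Delta \cdot v$, and $\psi$ is characterized by the constant-coefficient elliptic equation $\Delta_{k'}\psi = i\Delta\cdot v$, where $\Delta_{k'} := \nabla_{k'}'\cdot\nabla_{k'}'$. A Fourier-series computation on $L^2_\#(\cQ)$ shows that the eigenvalues of $\Delta_{k'}$ are $-(|m+k'|^2 + \kappa^2)$ with $m \in \Lambda^*$, so its inverse from $L^2_\#(\cQ)$ to $H^2_\#(\cQ)$ is bounded \emph{uniformly} in $k' \in \B$ thanks to $\kappa \neq 0$. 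This delivers $\|\nabla_{k'}'\psi\|_{L^2(\cQ)} \leq C|\delta|\|v\|_{L^2(\cQ)}$ uniformly, whence $\|v_\perp\|_{L^2(\cQ)}^2 \geq (1 - C^2|\delta|^2)\|v\|_{L^2(\cQ)}^2$, and the identity $\nabla_{k'}'\times v_\perp = \nabla_{k'}'\times v$ (curl kills the gradient correction) gives $a_{k'}(v_\perp, v_\perp) = a_{k'}(v,v)$.

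To conclude, I would take $W = \spann\{q_1(\cdot,k), \dots, q_j(\cdot,k)\}$, so that $a_k(v,v) \leq \omega_j^2(k)\|v\|_{L^2(\cQ)}^2$ on $W$; its image under $v \mapsto v_\perp$ is a $j$-dimensional subspace of $V_{k'}$ for $|\delta|$ small enough, by the injectivity built into the bound on $\|v_\perp\|_{L^2(\cQ)}$. The min-max principle for $\omega_j^2(k')$ on this test subspace, together with the two estimates above and a uniform upper bound on $\omega_j$ over $\B$ (standard for the $j$-th eigenvalue of an elliptic operator with bounded, strictly positive $\epsilon$), yields $\omega_j^2(k') \leq \omega_j^2(k) + C'|\delta|$ for $|\delta|$ sufficiently small. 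Symmetry in $k, k'$ upgrades this to a local two-sided Lipschitz estimate, and a standard chaining argument across the compact $\B$ gives the global Lipschitz bound. The delicate step is the uniform invertibility of $\Delta_{k'}$ on $L^2_\#(\cQ)$, which rests crucially on the hypothesis $\kappa \neq 0$.
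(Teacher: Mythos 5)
Your proposal is correct and rests on the same Courant--Fischer min--max characterisation that the paper uses, but you are considerably more careful at the one point where the paper's argument is actually thin. The paper expands $a_k(v,v)=a_{\ko}(v,v)+R(v,k,\ko)$, bounds $|R|\leq c\,|k-\ko|(\|\nabla'\times v\|_2^2+\|v\|_2^2)$, and then subtracts the two min--max expressions directly, without addressing the fact that the admissible $j$-dimensional subspaces $S$ live in the $k$-dependent form domain $V_k$ (which carries the constraint $\nabla_k'\cdot v=0$); as written, one cannot plug a minimising subspace for $\omega_j^2(\ko)$ into the min--max for $\omega_j^2(k)$. You identified exactly this domain dependence as the real obstruction and resolved it by transporting a test subspace from $V_k$ into $V_{k'}$ via the periodic version of Lemma~\ref{L:helmh}: writing $v=v_\perp+\nabla_{k'}'\psi$ with $\psi$ solving the constant-coefficient problem $\Delta_{k'}\psi=\ri(k'-k)\cdot v$, and noting that the symbol $-(|m+k'|^2+\kappa^2)$ is bounded away from zero uniformly in $k'\in\B$ precisely because $\kappa\neq0$, you obtain $\|\nabla_{k'}'\psi\|_2\leq C|k'-k|\,\|v\|_2$, hence $\|v_\perp\|_2^2\geq(1-C^2|k'-k|^2)\|v\|_2^2$, while $a_{k'}(v_\perp,v_\perp)=a_{k'}(v,v)$ since the curl annihilates the gradient correction. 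Combined with the easy comparison $|a_{k'}(v,v)-a_k(v,v)|\leq C|k'-k|\sqrt{a_k(v,v)}\,\|v\|_2+C|k'-k|^2\|v\|_2^2$ and the uniform bound $\sup_{k\in\B}\omega_j(k)<\infty$, the span of $q_1(\cdot,k),\dots,q_j(\cdot,k)$ maps to an admissible $j$-dimensional test space at $k'$ for $|k'-k|$ small and yields the one-sided estimate; symmetrising and chaining over the compact $\B$ gives the global Lipschitz bound. The payoff of your route is that it closes a gap the paper passes over silently (and makes essential, explicit use of $\kappa\neq0$ here, not only in Lemma~\ref{L:helmh}); the paper's version is shorter but reuses the same $S$ at two different $k$'s without saying how, and its final identification with $\omega_j^2(0)$ also quietly drops the $\frac1\epsilon$ weight. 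Both arguments ultimately prove the same local-to-global Lipschitz statement; yours does so with no hidden steps.
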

\begin{proof}
	Recall that the sesquilinear form $a_k(\cdot,\cdot)$  of the $H$-eigenvalue problem is defined as $a_k(\varphi,\psi):=\int_\cQ\frac1\epsilon\nabla_k'\times \varphi\cdot\overline{\nabla_k'\times\psi}\dd x$ for $\varphi,\psi\in V_k$. Since for a fixed $\ko\in\B$ it is $\nabla'_k=\nabla'+\ri k=\nabla'_\ko+\ri(k-\ko)$, one has
	\begin{equation*}
	a_k(v,v)=a_\ko(v,v)+R(v,k,\ko),
	\end{equation*}
	with
	\begin{equation*}
	\begin{split}
	|R(v,k,\ko)|&\leq\|\epsilon^{-1}\|_\infty\left(\|v\|_2^2|k-\ko|^2+2\|\nabla'_\ko\times v\|_2\|v\|_2|k-\ko|\right)\\
	&\leq c\,|k-\ko|\left(\|v\|_2^2(|k|+|\ko|)+\|v\|_2\left(\|\nabla'\times v\|_2+|\ko|\|v\|_2\right)\right)\\
	&\leq c\,|k-\ko|\left(\|\nabla'\times v\|_2^2+\|v\|_2^2\right).
	\end{split}
	\end{equation*}
	Using the variational characterization of the eigenvalues
	$$\omega^2_j(k)=\min_{\dim S=j}\max_{v\in S}\frac{a_k(v,v)}{\|v\|_2^2},$$
	where $S$ is an arbitrary subspace of $V_k$, we infer
	\begin{equation*}
	\omega^2_j(k)-\omega^2_j(\ko)\leq\min_{\dim S=j}\max_{v\in S}\frac{R(v,k,\ko)}{\|v\|_2^2}\leq c|k-\ko|\min_{\dim S=j}\max_{v\in S}\left(\frac{\|\nabla'\times v\|_2^2}{\|v\|_2^2}+1\right)=c\,|k-\ko|(\omega_j^2(0)+1).
	\end{equation*}
	Interchanging $k$ and $\ko$, we finally get
	\begin{equation*}
	|\omega^2_j(k)-\omega^2_j(\ko)|\leq c\,|k-\ko|.
	\end{equation*}
\end{proof}

Notice that Lemma \ref{eigv_Lip} evidently implies
\begin{equation}\label{eigv_bdd}
\sup_{k\in\B}|\omega^2_j(k)|\leq C
\end{equation}
for all $j\in\N$.

\begin{lem}\label{Lemma_Reg_supH^2}
	For all $j\in\N$ and $p_j$ defined in \eqref{pj} one has
	\begin{equation}\label{Reg_est_supH^2}
	\sup_{k\in\B}\|p_j(\cdot,k)\|_{H^2(\cQ)}<\infty.
	\end{equation}
\end{lem}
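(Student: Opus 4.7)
\smallskip
\noindent
The plan is to bootstrap from the already established $L^2$-normalization of $q_j(\cdot,k)$ and the uniform bound \eqref{eigv_bdd} on $\omega_j^2(k)$, and to control $p_j(\cdot,k)$ by decomposing it into its curl and divergence via a Friedrichs–type inequality for periodic vector fields.

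First, I would establish a uniform $L^2$-bound on $p_j(\cdot,k)$ directly from the $\epsilon$-normalization \eqref{orthog_pn} and $\epsilon^{-1}\in L^\infty$. Next, differentiating \eqref{pj} and combining with \eqref{E:vj-eq-L2} gives the key identity
\[ \nabla'_k\times p_j(\cdot,k)=i\omega_j(k)\,q_j(\cdot,k), \]
so $\|\nabla'_k\times p_j(\cdot,k)\|_2\le|\omega_j(k)|\|q_j(\cdot,k)\|_2$ is uniformly bounded. For the divergence, I would use the identity $\nabla'_k\cdot\nabla'_k\times v=0$ applied in \eqref{pj} to obtain $\nabla'_k\cdot(\epsilon p_j(\cdot,k))=0$, whence
\[ \nabla'_k\cdot p_j(\cdot,k)=-\epsilon^{-1}\nabla\epsilon\cdot p_j(\cdot,k), \]
which is bounded in $L^2(\cQ)$ by the previous $L^2$-control and $\epsilon^{-1}\nabla\epsilon\in L^\infty(\cQ)$ (assumption (A5)). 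The standard inequality $\|u\|_{H^1(\cQ)}\le c(\|u\|_2+\|\nabla'_k\times u\|_2+\|\nabla'_k\cdot u\|_2)$ for $\Lambda$-periodic vector fields (whose constant $c$ is uniform in $k\in\B$ by compactness of $\B$) then produces a uniform $H^1$-bound on $p_j(\cdot,k)$.

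For the $H^2$-bound I would apply the analogous second-order inequality
\[ \|u\|_{H^2(\cQ)}\le c\bigl(\|u\|_2+\|\nabla'_k\times u\|_{H^1(\cQ)}+\|\nabla'_k\cdot u\|_{H^1(\cQ)}\bigr). \]
The curl term is $i\omega_j(k)q_j(\cdot,k)$, so its $H^1$-norm is controlled by $\|q_j(\cdot,k)\|_{H^1(\cQ)}$. This in turn follows from testing \eqref{E:ev-eq-weak-vj} against $\varphi=q_j(\cdot,k)\in V_k$: indeed $\|\nabla'_k\times q_j\|_2^2\le\|\epsilon\|_\infty a_k(q_j,q_j)=\|\epsilon\|_\infty\omega_j^2(k)$, and on $V_k$ (where $\nabla'_k\cdot q_j=0$) the $H(\mathrm{curl})$- and $H^1$-norms are equivalent with $k$-independent constants. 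For the divergence, differentiating the identity $\nabla'_k\cdot p_j=-\epsilon^{-1}\nabla\epsilon\cdot p_j$ and using $\epsilon^{-1}\nabla\epsilon\in W^{1,\infty}(\cQ)$ (which follows from $\epsilon\in W^{2,\infty}(\cQ)$ and $\epsilon^{-1}\in L^\infty(\cQ)$) together with the $H^1$-bound from the previous paragraph yields a uniform $H^1$-bound on $\nabla'_k\cdot p_j(\cdot,k)$. Assembling these gives \eqref{Reg_est_supH^2}.

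The main technical point to watch will be the $k$-uniformity of the Friedrichs-type inequality $\|u\|_{H^s(\cQ)}\lesssim\|u\|_2+\|\nabla'_k\times u\|_{H^{s-1}}+\|\nabla'_k\cdot u\|_{H^{s-1}}$ for $s=1,2$. This can be made rigorous by passing to the plain differential operators $\nabla\times,\nabla\cdot$ (absorbing the lower-order $k$-terms into $\|u\|_{H^{s-1}(\cQ)}$) and invoking the classical $\Lambda$-periodic identity $\|u\|_{H^s(\cQ)}^2\sim\|u\|_2^2+\|\nabla\times u\|_{H^{s-1}}^2+\|\nabla\cdot u\|_{H^{s-1}}^2$, proved by Fourier series on $\Lambda^*$; since $k$ varies over the compact set $\B$, the resulting constants are uniform.
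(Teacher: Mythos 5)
Your proposal is correct and follows essentially the same route as the paper's proof: uniform $L^2$ control from the $\epsilon$-normalization, the identities $\nabla'_k\times p_j(\cdot,k)=\ri\,\omega_j(k)q_j(\cdot,k)$ and $\nabla'_k\cdot p_j(\cdot,k)=-\epsilon^{-1}\nabla\epsilon\cdot p_j(\cdot,k)$, and curl--divergence (Friedrichs-type) norm inequalities to pass to $H^1$ and then, using $\epsilon\in W^{2,\infty}$, $\epsilon^{-1}\in L^\infty$, to $H^2$. The only cosmetic differences are that the paper derives the divergence identity by applying $\nabla'_k\cdot$ to the eigenvalue equation for $p_j$ rather than from $\epsilon p_j$ being a curl, and bounds $\|\nabla'_k\times p_j\|_{H^1}$ via the equation $\nabla'_k\times\nabla'_k\times p_j=\omega_j^2(k)\epsilon p_j$ instead of via $\|q_j\|_{H^1}$; your explicit justification of the $k$-uniformity of the Friedrichs-type inequality is a point the paper leaves implicit.
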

\begin{proof}
	First, by the choice of the normalization of the Bloch eigenfunctions in \eqref{orthog_pn}, one has
	$$\|p_j(\cdot,k)\|^2_2\leq\|\epsilon^{-1}\|_\infty\int_\cQ\epsilon(x)|p_j(x,k)|^2\dd x=\|\epsilon^{-1}\|_\infty.$$
	Next, applying the divergence operator $\nabla'_k\cdot$ to \eqref{E:ev-eq-weak-uj}, one finds
	\begin{equation}\label{divk_pj}
	\nabla'_k\cdot p_j(x,k)=-\epsilon^{-1}(x)\nabla'_k\epsilon(x)\cdot p_j(x,k).
	\end{equation}
	Noticing that
	$$\|\nabla'_k\epsilon\|_\infty\leq \|\nabla\epsilon\|_\infty+
	\max_{k\in\B}(|k|+|\kappa|)\|\epsilon\|_\infty\leq c\,\|\epsilon\|_{W^{1,\infty}},$$
	one infers
	\begin{equation}\label{Reg_est:div}
	\sup_{k\in\B}\|\nabla'_k\cdot p_j(\cdot,k)\|_{L^2(\cQ)}\leq c\,\|\epsilon\|_{W^{1,\infty}}.
	\end{equation}
	To have a bound on the $H^1$-norm, we need to estimate also $\nabla'_k\times p_j(\cdot,k)$. We exploit the definition \eqref{pj} and equation \eqref{E:vj-eq-L2} that $q_j(\cdot,k)$ satisfies in the $L^2$-sense to get
	\begin{equation*}
	\nabla'_k\times p_j(\cdot,k)=\ri\nabla'_k\times\left(\frac1{\epsilon}\nabla'_k\times q_j(\cdot,k)\right)=\ri\omega_j^2(k)q_j(\cdot,k).
	\end{equation*}
	Therefore, from \eqref{eigv_bdd} and the normalization of the eigenfunctions $q_j(\cdot,k)$ we deduce
	\begin{equation}\label{Reg_est:curl}
	\sup_{k\in\B}\|\nabla'_k\times p_j(\cdot,k)\|_2\leq \sup_{k\in\B}|\omega_j^2(k)|\|q_j(\cdot,k)\|_2\leq c.
	\end{equation}
	We can thus conclude by \eqref{Reg_est:div}-\eqref{Reg_est:curl} that the same bound holds also in $H^1_\#(\cQ)$, i.e. 
	\begin{equation}\label{Reg_est:H1}
	\sup_{k\in\B}\|p_j(\cdot,k)\|_{H^1(\cQ)}\leq c.
	\end{equation}
	The $H^2$-norm is estimated similarly since
	\begin{equation}\label{H2_est}
	\|p_j(\cdot,k)\|_{H^2(\cQ)}\leq c\,\left(\|p_j(\cdot,k)\|_{L^2(\cQ)}+\|\nabla'_k\cdot p_j(\cdot,k)\|_{H^1(\cQ)}+\|\nabla'_k\times p_j(\cdot,k)\|_{H^1(\cQ)}\right).
	\end{equation}
	First, by \eqref{E:ev-eq-weak-uj} and \eqref{eigv_bdd},
	\begin{equation}\label{Reg_est:curlcurl}
	\begin{split}
	\|\nabla'_k\times p_j(\cdot,k)\|_{H^1(\cQ)}&\leq c\left(\|\nabla'_k\times p_j(\cdot,k)\|_2+\|\nabla'_k\times\nabla'_k\times p_j(\cdot,k)\|_2\right)\\
	&\leq c+\|\epsilon\|_\infty|\omega_j^2(k)|\|p_j(\cdot,k)\|_2\leq c.
	\end{split}
	\end{equation}
	Next, from \eqref{divk_pj} we deduce
	\begin{equation*}
	\nabla'_k\left(\nabla'_k\cdot p_j(\cdot,k)\right)=J'_k\left(\frac{\nabla'_k\epsilon}{\epsilon}\right)p_j(\cdot,k)+J'_k(p_j(\cdot,k))\frac{\nabla'_k\epsilon}{\epsilon},
	\end{equation*}
	where $J'_k(V)$ stands for the Jacobian of the vector field $V:\R^2\to\R^2$ with the derivatives $\partial_m$ replaced by the ``shifted'' derivatives $\partial_m+\ri k_m$ for $m\in\{1,2\}$. Hence from \eqref{Reg_est:H1} we have
	\begin{equation}\label{Reg_est:graddiv}
	\begin{split}
	\|\nabla'_k\left(\nabla'_k\cdot p_j(\cdot,k)\right)\|_{L^2(\cQ)}&\leq\left\|J'_k\left(\frac{\nabla'_k\epsilon}{\epsilon}\right)\right\|_\infty \|p_j(\cdot,k)\|_2+\|J'_k(p_j(\cdot,k))\|_2\left\|\frac{\nabla'_k\epsilon}\epsilon\right\|_\infty\\
	&\leq c\left(\|\epsilon\|_{W^{2,\infty}},\|\epsilon^{-1}\|_\infty\right)\|p_j(\cdot,k)\|_{H^1(\cQ)}\leq c,
	\end{split}
	\end{equation}
	for all $k\in\B$. Combining \eqref{H2_est} with \eqref{Reg_est:curlcurl} and \eqref{Reg_est:graddiv}, one infers \eqref{Reg_est_supH^2} and the proof is concluded.
\end{proof}

\begin{lem}\label{Lemma_Reg_supW^2infty}
	For all $j\in\N$ and $p_j$ defined in \eqref{pj} one has
	\begin{equation*}
	\sup_{k\in\B}\|p_j(\cdot,k)\|_{W^{2,\infty}(\cQ)}<\infty.
	\end{equation*}
\end{lem}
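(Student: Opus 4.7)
The strategy parallels Lemma~\ref{Lemma_Reg_supH^2}: bound $\|p_j(\cdot,k)\|_{L^\infty}$, $\|\nabla'_k\cdot p_j(\cdot,k)\|_{W^{1,\infty}}$ and $\|\nabla'_k\times p_j(\cdot,k)\|_{W^{1,\infty}}$ uniformly in $k$, and combine these with elliptic regularity for the constant-coefficient periodic Laplacian. The first estimate is essentially free: from Lemma~\ref{Lemma_Reg_supH^2} and the two-dimensional Sobolev embedding $H^2(\cQ)\hookrightarrow L^\infty(\cQ)\cap W^{1,p}(\cQ)$ valid for every $p<\infty$, I have $\sup_{k\in\B}\|p_j(\cdot,k)\|_{L^\infty}<\infty$ and $\sup_{k\in\B}\|\nabla p_j(\cdot,k)\|_{L^p}<\infty$ for every finite $p$.

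To upgrade to $W^{1,\infty}$-control on $p_j$, I rewrite \eqref{E:ev-eq-weak-uj} together with the identity $\nabla'_k\times\nabla'_k\times=-\Delta'_k+\nabla'_k(\nabla'_k\cdot\,)$ and \eqref{divk_pj} as a componentwise scalar elliptic equation
\[
-\Delta p_j(\cdot,k)=2ik\cdot\nabla p_j(\cdot,k)-(|k|^2+\kappa^2)p_j(\cdot,k)+\omega_j^2(k)\,\epsilon\,p_j(\cdot,k)+\nabla'_k\big(\epsilon^{-1}\nabla\epsilon\cdot p_j(\cdot,k)\big)=:F_k.
\]
Under (A5), the right-hand side $F_k$ lies in $L^p(\cQ)$ for every $p<\infty$ uniformly in $k\in\B$ by the previous paragraph. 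Calder\'on-Zygmund $L^p$-regularity for the periodic Laplacian (with $k$-independent constants) then gives $\|p_j(\cdot,k)\|_{W^{2,p}(\cQ)}\le C_p$ uniformly in $k$, and Morrey's embedding $W^{2,p}\hookrightarrow W^{1,\infty}$ for $p>2$ yields $\sup_k\|p_j(\cdot,k)\|_{W^{1,\infty}}<\infty$. An entirely analogous bootstrap applied to $q_j$ using \eqref{E:vj-eq-L2} and $\nabla'_k\cdot q_j=0$ produces the same conclusion for $q_j$, lifting it from $H^1$ successively to $H^3$ and then to $W^{1,\infty}$.

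With these two $W^{1,\infty}$-bounds in hand, I recover the curl and divergence of $p_j$ in $W^{1,\infty}$ exactly as in Lemma~\ref{Lemma_Reg_supH^2}: the identity $\nabla'_k\times p_j(\cdot,k)=i\omega_j(k)q_j(\cdot,k)$ immediately gives $\nabla'_k\times p_j\in W^{1,\infty}$ uniformly in $k$, while differentiating \eqref{divk_pj} and using (A5) together with the previously established $\|p_j\|_{W^{1,\infty}}$-bound gives $\nabla'_k\cdot p_j\in W^{1,\infty}$ uniformly. Substituting these back into $F_k$ shows $F_k\in L^\infty(\cQ)$ uniformly. A componentwise Schauder estimate for $-\Delta(p_j)_m=(F_k)_m$ on the torus $\cQ$ will then yield $p_j\in C^{2,\alpha}(\cQ)\hookrightarrow W^{2,\infty}(\cQ)$ with a uniform bound. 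All $k$-dependent factors are controlled by $\sup_{k\in\B}|k|<\infty$, \eqref{eigv_bdd} and the Lipschitz continuity of $\omega_j$ from Lemma~\ref{eigv_Lip}.

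The main obstacle is showing that $F_k$ is actually H\"older continuous, as required for Schauder to apply. The problematic contribution appears after expanding $\nabla'_k\big(\epsilon^{-1}\nabla\epsilon\cdot p_j\big)$ into a term of the form $(\partial^2\epsilon)\cdot p_j$ in which, under (A5), the entries of $\partial^2\epsilon$ are only in $L^\infty(\cQ)$ and need not be H\"older. A naive bootstrap therefore yields only $F_k\in L^\infty$, which by Calder\'on-Zygmund gives $p_j\in W^{2,p}$ for every $p<\infty$ but stops short of $W^{2,\infty}$. To close this gap one can rewrite $(\partial_\ell^2\epsilon)p_j=\partial_\ell[(\partial_\ell\epsilon)p_j]-(\partial_\ell\epsilon)(\partial_\ell p_j)$, where the second summand is H\"older continuous as the product of a $C^{0,1}$-factor and the $C^{0,\alpha}$-factor $\partial_\ell p_j$ inherited from $p_j\in W^{1,\infty}\cap W^{2,p}$, and treat the first, Lipschitz divergence, term by reformulating the scalar equation in divergence form and invoking a suitable $W^{2,p}$/Schauder hybrid estimate; this should deliver the announced uniform $W^{2,\infty}(\cQ)$-bound.
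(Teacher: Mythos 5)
Your first two paragraphs are fine and in fact recover (in more detail) what the paper gets essentially for free: by Lemma \ref{Lemma_Reg_supH^2}, the identity $\nabla'_k\times\nabla'_k\times=-\Delta'_k+\nabla'_k(\nabla'_k\cdot)$, \eqref{divk_pj}, Calder\'on--Zygmund and Morrey, you obtain $\sup_{k\in\B}\|p_j(\cdot,k)\|_{W^{2,p}(\cQ)}<\infty$ for every $p<\infty$, hence a uniform $W^{1,\infty}$-bound, and the analogous bounds for $q_j$. The genuine gap is the final step, which you flag yourself but do not close: under (A5) the term involving $\partial^2\epsilon$ is only in $L^\infty(\cQ)$, so $F_k\in L^\infty$ and Schauder does not apply, and the proposed repair does not work either. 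Rewriting $(\partial_\ell^2\epsilon)p_j=\partial_\ell[(\partial_\ell\epsilon)p_j]-(\partial_\ell\epsilon)\partial_\ell p_j$ trades the bad term for the divergence of a Lipschitz field, but $\mathrm{div}$ of a $W^{1,\infty}$-field is again merely bounded; for $-\Delta u=f$ with $f\in L^\infty$ (equivalently $f=\mathrm{div}\,G$, $G\in W^{1,\infty}$) one gets $u\in W^{2,p}$ for all finite $p$ and $\nabla u\in C^{0,\alpha}$, but in general \emph{not} $u\in W^{2,\infty}$ -- this is exactly the failing $p=\infty$ endpoint of Calder\'on--Zygmund, and there is no ``$W^{2,p}$/Schauder hybrid'' estimate that circumvents it. As written, your argument proves uniform $W^{2,p}$-bounds for every $p<\infty$ (and $C^{1,\alpha}$-bounds), but not the asserted uniform $W^{2,\infty}$-bound.

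For comparison, the paper's (very terse) proof does not funnel the $\partial^2\epsilon$-term through a Poisson solve at all: it takes $\sup_{k\in\B}\|p_j(\cdot,k)\|_{L^\infty}<\infty$ from Lemma \ref{Lemma_Reg_supH^2} and $H^2(\cQ)\hookrightarrow L^\infty(\cQ)$, and then repeats the curl/divergence bootstrap of Lemma \ref{Lemma_Reg_supH^2} with sup-norms in place of $L^2$-norms: $\nabla'_k\times p_j$ is controlled through the companion eigenfunction $q_j$ (whose curl is $-\ri\omega_j\epsilon p_j$ and whose divergence vanishes), and $\nabla'_k(\nabla'_k\cdot p_j)$ is computed explicitly from \eqref{divk_pj}, so that $\partial^2\epsilon$ only ever appears as a bounded multiplier of $p_j$, never as the right-hand side of an elliptic equation whose solution must be differentiated twice in $L^\infty$. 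If you want to complete your argument you need to exploit this first-order coupling between $p_j$ and $q_j$ (or a Helmholtz splitting of $p_j$) rather than place the rough term as data for $-\Delta$; the route through $F_k\in L^\infty$ cannot reach $W^{2,\infty}$.
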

\begin{proof}
	By Lemma \ref{Lemma_Reg_supH^2} and the embedding $H^2(\cQ)\hookrightarrow L^\infty(\cQ)$ we infer $\sup_{k\in\B}\|p_j(\cdot,k)\|_\infty<\infty$. The upgrade to $W^{2,\infty}$-regularity is then accomplished by following the same steps as in the proof of Lemma \ref{Lemma_Reg_supH^2}.
\end{proof}

Next, we aim to prove \eqref{key2}. Let $\mathbb K$ be a connected and contractible subset of $\B$ such that $\omega_j(k)$ is simple for all $k\in\mathbb K$. Notice that we meet such a condition if $\mK=B_\delta(k^{(i)})$ with $j=n_*$, $k^{(i)}\in\{k^{(1)},\dots,k^{(N)}\}$ and $0<\delta\ll 1$ by assumption (A3). Indeed, the geometric simpleness of $\omega_{n_*}(k^{(i)})$ can be extended to $\omega_{n_*}(k)$ for $k$ in a whole neighbourhood of $k^{(i)}$, see \cite[Theorem IV.3.16]{Kato}.

As a first step, we prove the following.
\begin{lem}\label{eigf_Lip_L2q}
	The map $\mathbb K\ni k\mapsto q_j(\cdot,k)\in L^2_\#(\cQ)$ is $C^2$.
\end{lem}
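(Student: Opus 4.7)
My approach would be the standard Riesz spectral projection technique from Kato's analytic perturbation theory (\cite{Kato}, Chap.~VII).

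First, I would observe that the family $\{L_k^{(H)}\}_{k \in \mK}$ is \emph{polynomial} of degree two in $k$ on a $k$-independent domain. Writing $\nabla'_k\times = \nabla'\times + \ri\, \tilde k\times$ with $\tilde k := (k_1, k_2, 0)^{\trans}$ and expanding, one obtains
\begin{equation*}
	L_k^{(H)} = L_0^{(H)} + A_1(k) + A_2(k),
\end{equation*}
where $A_1(k)$ is linear in $k$ and a first-order differential operator with $W^{1,\infty}$ coefficients (using $\epsilon^{-1} \in W^{1,\infty}$), and $A_2(k)$ is quadratic in $k$ and a bounded multiplication operator on $L^2_\#(\cQ)$. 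Since $\tilde k \times $ is bounded on $L^2$, the common domain $H_\#(\mathrm{curl}^2_\epsilon)$ is $k$-independent, so $\{L_k^{(H)}\}$ is an analytic family of type (A) in Kato's sense.

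Second, fix any $k_0 \in \mK$. By the assumed simpleness, $\omega_j^2(k_0)>0$ is an isolated simple eigenvalue of $L_{k_0}^{(H)}$, so I can pick $\delta > 0$ small enough that the positively oriented circle $\Gamma := \partial B_\delta(\omega_j^2(k_0)) \subset \C$ lies in the resolvent set $\rho(L_{k_0}^{(H)})$ and encloses only $\omega_j^2(k_0)$ (separated from $0$ and all other eigenvalues). Combining Lemma \ref{eigv_Lip} with the Neumann-series representation
\begin{equation*}
(L_k^{(H)} - zI)^{-1} = \big[I + (L_{k_0}^{(H)}-zI)^{-1}(L_k^{(H)} - L_{k_0}^{(H)})\big]^{-1} (L_{k_0}^{(H)}-zI)^{-1},
\end{equation*}
convergent uniformly in $z\in\Gamma$ for $|k-k_0|$ small, I obtain a neighborhood $U \subset \mK$ of $k_0$ on which $\Gamma \subset \rho(L_k^{(H)})$ and the resolvent is analytic in $k$ with values in $\mathcal{B}(L^2_\#(\cQ))$. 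The Riesz projection
\begin{equation*}
P(k) := -\frac{1}{2\pi\ri} \oint_\Gamma (L_k^{(H)} - zI)^{-1}\,\dd z
\end{equation*}
is then a rank-one orthogonal projector onto $\spann\{q_j(\cdot, k)\}$, and $k \mapsto P(k)$ is $C^\infty$ on $U$ as an operator-valued map.

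Third, I would set $\tilde q(k) := P(k)\, q_j(\cdot, k_0)$. This is $C^\infty$ in $k \in U$ and satisfies $\tilde q(k_0) = q_j(\cdot, k_0) \neq 0$, hence $\tilde q(k) \neq 0$ in a (possibly smaller) neighborhood. Normalizing $q_j(\cdot, k) := \tilde q(k)/\|\tilde q(k)\|_2$ restores $\|q_j(\cdot, k)\|_2 = 1$, while the residual phase ambiguity is fixed by the continuity requirement (equivalently, the convention $\langle q_j(\cdot,k), q_j(\cdot, k_0)\rangle_{L^2} > 0$ locally). This yields a $C^\infty$, hence $C^2$, local eigenfunction section. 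Finally, since $\mK$ is connected and contractible, the local sections can be patched along any continuous path without monodromy into a globally $C^2$ map $\mK \ni k \mapsto q_j(\cdot, k) \in L^2_\#(\cQ)$. The principal technical point is the uniform-in-$z\in\Gamma$ analyticity of the resolvent: this reduces to controlling $L_k^{(H)} - L_{k_0}^{(H)}$ as a relatively bounded perturbation of $L_{k_0}^{(H)} - zI$ on its domain, which follows from the elliptic-type regularity available on $H_\#(\mathrm{curl}^2_\epsilon)$.
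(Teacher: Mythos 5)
Your overall skeleton (Riesz projection onto the simple eigenvalue, a local smooth eigenvector, patching over the contractible set $\mK$) is the same as the paper's, but your first step contains a genuine gap: the claim that $\{L^{(H)}_k\}_k$ is an analytic family of type (A) on the $k$-independent domain $H_\#({\rm curl}^2_\epsilon)$. The boundedness of $\tilde k\times$ on $L^2$ only shows that $H_\#({\rm curl})$ is $k$-independent; it does not control the second-order piece. Writing $\tilde h$ for the shift $k-k_0$, the difference $L^{(H)}_k-L^{(H)}_{k_0}$ contains the term $\nabla'_{k_0}\times\big(\tfrac{\ri}{\epsilon}\,\tilde h\times v\big)$, which involves the \emph{full} gradient of $v$, not just $\nabla'\times v$. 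The curl-curl operator is not elliptic: its domain contains every gradient field $v=\nabla'_{k_0}\psi$ with $\psi\in H^1_\#(\cQ)$, on which $L^{(H)}_{k_0}v=0$ while the above term involves second derivatives of $\psi$ and in general fails to lie in $L^2$. Consequently the domain is in fact $k$-dependent, the perturbation is not relatively bounded with respect to $L^{(H)}_{k_0}$, and the ``elliptic-type regularity on $H_\#({\rm curl}^2_\epsilon)$'' that you invoke to close the Neumann-series step does not exist. Since the uniform-in-$z$ analyticity of the resolvent along $\Gamma$ is, by your own account, the heart of the argument, the proof as written does not go through. (A related warning sign: on all of $L^2_\#(\cQ)$ the kernel of $L^{(H)}_k$ is infinite dimensional, so $0$ lies in the essential spectrum; this is harmless for a contour around $\omega_j^2(k_0)>0$, but it shows one cannot treat $L^{(H)}_k$ as a nicely elliptic family.)

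The step can be repaired, but a different device is needed. One option is to work at the form level: the forms $a_k$ have the common form domain $H_\#({\rm curl},\cQ)$ (no divergence constraint), and $a_k-a_{k_0}$ is form-bounded with relative bound zero, so one gets a holomorphic family of type (B) and hence resolvent analyticity, after which your contour-projection and patching argument is fine. The paper instead sidesteps the unbounded pencil altogether: it passes to the bounded Fredholm operator $S_j(k)=A_0(k)^{-1}E-\nu_j(k)I$ with $A_0(k)=L^{(H)}_k+a_0 I$ and $\nu_j(k)=(a_0+\omega_j^2(k))^{-1}$, whose kernel is exactly the $j$-th eigenspace, applies the Riesz projection to the isolated zero eigenvalue of $S_j(k)$, and then uses the triviality of the resulting line bundle over the contractible $\mK$ to produce a global $C^2$ section; there the $C^2$ (rather than analytic) regularity is inherited from $\omega_j^2(k)$ via (A4), whereas your claimed $C^\infty$ smoothness would only be recovered under the repaired type-(B) route.
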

In fact, Lipschitz continuity of the map in Lemma \ref{eigf_Lip_L2q} will be enough for our purposes in Sec. \ref{Section_approx}.
\begin{proof}
	Define the operator $A_0(k):=\nabla'_k\times\left(\frac1\epsilon\nabla'_k\times\right)+a_0I=L_k^{(H)}+a_0I$,
	where $a_0$ is a positive constant. Since the spectrum of $L_k^{(H)}$ is contained in the non-negative half-line (see Sec. \ref{S:spec-H}), the operator $A_0(k)$ is invertible, so in particular $A_0(k)^{-1}:L^2_\#(\cQ)\to V_k$, the latter space being the form domain of $L_k^{(H)}$ defined in \eqref{DomainVk}. Hence
	\begin{equation*}
	S_j(k):=A_0(k)^{-1}E-\nu_j(k)I,\qquad k\in\mathbb K,
	\end{equation*}
	where $\nu_j(k):=\left(a_0+\omega_j^2(k)\right)^{-1}$ and $E:H^1_\#(\cQ)\to L^2_\#(\cQ)$ is the identical embedding, is a well-defined Fredholm operator on $L^2_\#(\cQ)$ which depends on $k$ in a $C^2$ fashion. Indeed, $E$ is a compact embedding (see e.g. \cite[Theorems 3.5,3.7]{Agmon}) and so $S_j(k)$ is a compact perturbation of (a multiple of) the identity. The $C^2$-regularity is a consequence of the same property that the map $k\mapsto\omega_j(k)$ enjoys, see assumption (A4). Moreover, it is easy to see that $\ker S_j(k)$ coincides with the $j$-th eigenspace of $L_k^{(H)}$ and so, by the geometric simpleness of $\omega_j(k)$, it is of dimension one for all $k\in\mathbb K$. This yields the structure of a vector bundle to $\ker S_j:=\bigcup_{k\in\mathbb K}\ker S_j(k)$ over $\mathbb K$, see \cite[p.62]{BB}. Moreover, we claim that the map $k\mapsto\ker S_j(k)$ is $C^2$.
	
	To this aim let $\ko\in\mK$. Since $S_j(k)$ is a self-adjoint Fredholm operator with a nontrivial kernel for all $k\in\mK$, there exists an interval $[-\delta,\delta]\subset\R$, such that $\sigma(S_j(k))\cap[-\delta,\delta]=\{0\}$\footnote{If $A$ is a self-adjoint operator on a Hilbert space and $\lambda\in\C$, then $A-\lambda$ is Fredholm if and only if $\lambda$ is a discrete eigenvalue of finite multiplicity or lies in the resolvent of $A$. See also \cite[Chp. XVII Theorem 2.1]{GGK}.}. 
	Since $k\mapsto S_j(k)$ is continuous, by spectral continuity \cite[Chapter II, Theorem 4.2]{GGK}, such $\delta$ can be chosen independent of $k$ for all $k\in B_r(\ko)$. Consider therefore the map  
	\begin{equation*}
	S_j(k)\mapsto P(k):=\frac1{2\pi\ri}\oint_\Gamma\left(S_j(k)-\lambda\right)^{-1}\dd\lambda,
	\end{equation*}
	where $\Gamma$ is a closed curve in $\C$ that isolates $\{0\}$ from the rest of the spectrum. Then $P(k)$ is a projection onto the eigenspace of the $0$ eigenvalue for all $k\in\mK$, i.e. $P(k)=\Pi_{\ker(S_j(k))}$, see \cite[Sec.6.4]{Kato} or \cite[Theorems XII.5-6]{RS}. It is clear then that $k\mapsto P(k)$ is $C^2$ relying on the same property of $k\mapsto S_j(k)$. Therefore the map $k\mapsto\Imag P(k)=\ker(S_j(k))$ shares the same regularity too, and the claim is proved.
	
	$\mathbb K$ being contractible, the vector bundle $\ker S_j$ is $C^2$-diffeomorphic to the trivial bundle $\mK\times\C$, see e.g. \cite[Ex.2 Chapter 4.1]{Hirsch}, which clearly has a constant section $\tilde s_j:\mK\to\mK\times\C$ such that $\tilde s_j(k)=(k,1)$. Then, calling such diffeomorphism $\varphi_j$, the map $s_j:\mathbb K\to\ker S_j$ defined as $s_j:=\varphi_j^{-1}\circ\tilde s_j$ is a $C^2$ section over $\ker S_j$. This means that, up to a multiplication by a unitary complex function, it is possible to redefine the $j$-th eigenfunction $q_j(\cdot,k)$ normalized as in \eqref{orthog_pn} and such that the map $\mathbb K\ni k\mapsto q_j(\cdot,k)\in L^2_\#(\cQ)$ is $C^2$.
\end{proof}

Before transferring such a property to the eigenfunctions $p_j(\cdot,k)$, we need a stronger results on $q_j(\cdot,k)$.

\begin{lem}\label{eigf_Lip_H1q}
	The map $\mathbb K\ni k\mapsto q_j(\cdot,k)\in H^1_\#(\cQ)$ is Lipschitz continuous.
\end{lem}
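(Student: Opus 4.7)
The plan is to upgrade the $L^2$-Lipschitz continuity of $k\mapsto q_j(\cdot,k)$ (which is a consequence of the $C^2$-regularity established in Lemma \ref{eigf_Lip_L2q}) to $H^1$-Lipschitz continuity via an energy argument based on the strong form of the eigenvalue equation. Fix $k,\ko\in\mK$, set $v:=q_j(\cdot,k)-q_j(\cdot,\ko)$, so that $\|v\|_2\leq C|k-\ko|$ is already in hand. I shall work with the Gaffney-type inequality
\begin{equation*}
\|v\|_{H^1(\cQ)} \leq C\big(\|v\|_2 + \|\nabla'_\ko\times v\|_2 + \|\nabla'_\ko\cdot v\|_2\big),
\end{equation*}
which holds on the periodicity cell by a direct Fourier series argument (the shift $ik^0$ in $\nabla'_\ko$ acts merely as a bounded multiplication), with $C$ uniform for $\ko\in\mK$. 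The divergence contribution is immediate: combining $\nabla'_k\cdot q_j(\cdot,k)=\nabla'_\ko\cdot q_j(\cdot,\ko)=0$ with the identity $\nabla'_k\cdot u - \nabla'_\ko\cdot u = i(k-\ko)\cdot(u_1,u_2)$ yields $\nabla'_\ko\cdot v = -i(k-\ko)\cdot(q_j(\cdot,k))_{1,2}$, hence $\|\nabla'_\ko\cdot v\|_2\leq C|k-\ko|$.

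The curl bound is the crux, and the key device---which I expect to be the main subtlety---is to pass from the weak form on the $k$-dependent space $V_k$ to an identity valid on the $k$-independent space $H_\#(\text{curl})$. This is possible because $q_j(\cdot,k)\in H_\#(\text{curl}_\epsilon^2)$ solves \eqref{E:vj-eq-L2} strongly in $L^2_\#(\cQ)$: integration by parts on the periodic cell (with all boundary contributions cancelling by periodicity) then yields $a_k(q_j(\cdot,k),\varphi)=\omega_j^2(k)\langle q_j(\cdot,k),\varphi\rangle$ for \emph{every} $\varphi\in H_\#(\text{curl})$, without any divergence-free constraint on $\varphi$. Testing with $\varphi=v$ at both $k$ and $\ko$ and subtracting gives
\begin{equation*}
a_\ko(v,v) + (a_k-a_\ko)(q_j(\cdot,k),v) = \omega_j^2(k)\langle q_j(\cdot,k),v\rangle - \omega_j^2(\ko)\langle q_j(\cdot,\ko),v\rangle,
\end{equation*}
whose right-hand side is $O(|k-\ko|^2)$ by the $L^2$-Lipschitz bound on $v$, the normalisation $\|q_j(\cdot,\cdot)\|_2=1$, and Lemma \ref{eigv_Lip}.

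The form perturbation is estimated using $\nabla'_k\times u = \nabla'_\ko\times u + i(k-\ko)\times u$ (with $k-\ko$ extended to $\R^3$ by a zero third component) together with the uniform bound $\|\nabla'_\ko\times q_j(\cdot,k)\|_2\leq C$ (which follows from $a_k(q_j,q_j)=\omega_j^2(k)\leq C$, the lower bound $a_k(u,u)\geq\|\epsilon\|_\infty^{-1}\|\nabla'_k\times u\|_2^2$, and the change of basepoint), producing
\begin{equation*}
|(a_k-a_\ko)(q_j(\cdot,k),v)| \leq C|k-\ko|^2 + C|k-\ko|\,\|\nabla'_\ko\times v\|_2.
\end{equation*}
Combining with $a_\ko(v,v)\geq \|\epsilon\|_\infty^{-1}\|\nabla'_\ko\times v\|_2^2$ and absorbing the linear term via Young's inequality yields $\|\nabla'_\ko\times v\|_2\leq C|k-\ko|$, which together with the divergence estimate and the Gaffney-type inequality gives the desired $\|v\|_{H^1(\cQ)}\leq C|k-\ko|$.
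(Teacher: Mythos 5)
Your proof is correct, and it takes a genuinely different route to the curl bound than the paper's Lemma A.4. Both arguments first split $\|v\|_{H^1}$ (for $v:=q_j(\cdot,k)-q_j(\cdot,\ko)$) into an $L^2$ part, a divergence part, and a curl part, and both handle the divergence identically using $\nabla'_k\cdot q_j(\cdot,k)=0$. For the curl, however, the paper directly expands the quadratic form $\|\nabla'_k\times q_j(\cdot,k)-\nabla'_\ko\times q_j(\cdot,\ko)\|_2^2$ into two integrals $M_1$ and $M_2$ and estimates them term by term, weaving in the weak eigenvalue equation inside $M_1$ and controlling the cross terms $M_2$ by Cauchy--Schwarz plus an absorption. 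You instead test both eigenvalue identities against the \emph{same} fixed element $v$, subtract, isolate the coercive piece $a_\ko(v,v)\geq\|\epsilon\|_\infty^{-1}\|\nabla'_\ko\times v\|_2^2$, and close with a single Young absorption of the one linear term in $\|\nabla'_\ko\times v\|_2$ coming from the form perturbation $(a_k-a_\ko)$. This is structurally cleaner: the bootstrap inequality $\|\nabla'_\ko\times v\|_2^2\leq C|k-\ko|^2+C|k-\ko|\,\|\nabla'_\ko\times v\|_2$ appears in one line, rather than being distributed across the $M_1$/$M_2$ bookkeeping. You also make explicit a point that the paper leaves implicit and slightly imprecise: the weak identity $a_k(q_j(\cdot,k),\varphi)=\omega_j^2(k)\langle q_j(\cdot,k),\varphi\rangle$ must be extended from $\varphi\in V_k$ (the $k$-dependent divergence-free subspace) to all $\varphi\in H_\#({\rm curl},\cQ)$, since the natural test function $v$ lies in neither $V_k$ nor $V_\ko$; this extension is legitimate precisely because $q_j(\cdot,k)\in H_\#({\rm curl}_\epsilon^2,\cQ)$ satisfies \eqref{E:vj-eq-L2} in the strong $L^2$ sense, so one may integrate by parts against arbitrary periodic curl-test fields. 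Your Gaffney-type inequality with the $\ko$-shifted operators in place of $\nabla'$ is a small but natural simplification that avoids the paper's initial conversion between $\nabla'$ and $\nabla'_k$; its uniformity in $\ko\in\mK$ is immediate from $|\tilde K|^2\geq\kappa^2>0$ in Fourier variables, exactly as you indicate.
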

\begin{proof}
	In other words, we aim to prove that for an arbitrary $\ko\in\mathbb K$ there exists a suitable constant $c(\ko)>0$ such that
	\begin{equation}\label{eigf_Lip_H1q_estimate}
	\|q_j(\cdot,k)-q_j(\cdot,\ko)\|_{H^1(\cQ)}\leq c\,|k-\ko|,\qquad\mbox{for all}\;\,k\in\mathbb K.
	\end{equation}
	Noticing that the Helmholtz decomposition of $H^1_\#(\cQ)$ of Lemma \ref{L:helmh} holds with the operators $\nabla'\times$ and $\nabla'\cdot$, as the particular case when $k=0$, we estimate separately $\|\nabla'\cdot q_j(\cdot,k)-\nabla'\cdot q_j(\cdot,\ko)\|_2$ and $\|\nabla'\times q_j(\cdot,k)-\nabla'\times q_j(\cdot,\ko)\|_2$. Since $\nabla'_k\cdot q_j(\cdot,k)=0$ for all $k\in\B$, one has
	\begin{equation}\label{eigf_Lip_H1q_div}
	\begin{split}
	\|\nabla'\cdot q_j(\cdot,k)-\nabla'\cdot q_j(\cdot,\ko)\|_2&=\|k\cdot q_j(\cdot,k)-\ko\cdot q_j(\cdot,\ko)\|_2\\
	&\leq|k|\|q_j(\cdot,k)-q_j(\cdot,\ko)\|_2-|k-\ko|\|q_j(\cdot,\ko)\|_2\\
	&\leq c\,|k-\ko|,
	\end{split}
	\end{equation}
	due to Lemma \ref{eigf_Lip_L2q}. The estimate for the difference of the curls' is more involved and is based on equation \eqref{E:ev-eq-weak-vj} which the eigenfunctions satisfy. First,
	\begin{equation}\label{eigf_Lip_H1q_curl}
	\|\nabla'\times q_j(\cdot,k)-\nabla'\times q_j(\cdot,\ko)\|_2=\|\nabla'_k\times q_j(\cdot,k)-\nabla'_\ko\times q_j(\cdot,\ko)\|_2+\|k\times q_j(\cdot,k)-\ko\times q_j(\cdot,\ko)\|_2,
	\end{equation}
	where the second term is estimated like above. Noticing that $\nabla'_k=\nabla'_\ko+\ri(k-\ko)$, we write
	\begin{equation}
	\begin{split}
	\|\nabla'_k\times q_j(\cdot,k)&-\nabla'_\ko\times q_j(\cdot,\ko)\|_2^2\\
	&=\int_\cQ\nabla'_k\times q_j(\cdot,k)\cdot\overline{\left(\nabla'_k\times q_j(\cdot,k)-\left(\nabla'_k-\ri(k-\ko)\right)\times q_j(\cdot,\ko)\right)}\\
	&\quad-\int_\cQ\nabla'_\ko\times q_j(\cdot,\ko)\cdot\overline{\left(\left(\nabla'_\ko+\ri(k-\ko)\right)\times q_j(\cdot,k)-\nabla'_\ko\times q_j(\cdot,\ko)\right)}\\
	&=M_1-\ri M_2,
	\end{split}
	\end{equation}
	where
	\begin{equation}
	M_1:=\int_\cQ\nabla'_k\times q_j(\cdot,k)\cdot\overline{\nabla'_k\times(q_j(\cdot,k)-q_j(\cdot,\ko))}-\int_\cQ\nabla'_\ko\times q_j(\cdot,\ko)\cdot\overline{\nabla'_\ko\times(q_j(\cdot,k)-q_j(\cdot,\ko))}
	\end{equation}
	and
	\begin{equation}
	M_2:=\int_\cQ\nabla'_k\times q_j(\cdot,k)\cdot\overline{(k-\ko)\times q_j(\cdot,\ko)}-\int_\cQ\nabla'_\ko\times q_j(\cdot,\ko)\cdot\overline{(k-\ko)\times q_j(\cdot,k)}.
	\end{equation}
	We estimate $M_1$ and $M_2$ separately. First, using \eqref{E:ev-eq-weak-vj},
	\begin{equation}
	\begin{split}
	M_1&\leq\|\epsilon^{-1}\|_\infty\left|\int_\cQ\frac1\epsilon\nabla'_k\times q_j(\cdot,k)\cdot\overline{\nabla'_k\times(q_j(\cdot,k)-q_j(\cdot,\ko))}\right.\\
	&\qquad \left. -\!\int_\cQ\frac1\epsilon\nabla'_\ko\times q_j(\cdot,\ko)\cdot\overline{\nabla'_\ko\times(q_j(\cdot,k)-q_j(\cdot,\ko))}\,\right|\\
	&\leq c\left|\omega_j^2(k)\int_\cQ q_j(\cdot,k)\cdot\overline{(q_j(\cdot,k)-q_j(\cdot,\ko))}-\omega_j^2(\ko)\int_\cQ q_j(\cdot,\ko)\cdot\overline{(q_j(\cdot,k)-q_j(\cdot,\ko))}\,\right|\\
	&\leq c\,|\omega_j^2(k)-\omega_j^2(\ko)|\|q_j(\cdot,\ko)\|_2\|q_j(\cdot,k)-q_j(\cdot,\ko)\|_2+c\sup_{k\in\B}|\omega_j^2(k)|\|q_j(\cdot,k)-q_j(\cdot,\ko)\|_2^2\\
	&\leq c\,|k-\ko|^2,
	\end{split}
	\end{equation}
	where in the last inequality we make use of \eqref{eigv_bdd} and Lemmas \ref{eigv_Lip} and \ref{eigf_Lip_L2q}. Similarly, we also get
	\begin{equation}\label{M2}
	\begin{split}
	M_2&\leq\|\nabla'_k\times q_j(\cdot,k)\|_2|k-\ko|\|q_j(\cdot,k)-q_j(\cdot,\ko)\|_2\\
	&\qquad +\|\nabla'_k\times q_j(\cdot,k)-\nabla'_\ko\times q_j(\cdot,\ko)\|_2|k-\ko|\| q_j(\cdot,\ko)\|_2\\
	&\leq c\sup_{k\in\B}\|\nabla'_k\times q_j(\cdot,k)\|_2|k-\ko|^2+\delta\|\nabla'_k\times q_j(\cdot,k)-\nabla'_\ko\times q_j(\cdot,\ko)\|_2^2+\frac{|k-\ko|^2}{4\delta}
	\end{split}
	\end{equation}
	for a small $\delta>0$. Therefore, combining equations \eqref{eigf_Lip_H1q_div}-\eqref{M2}, we finally infer \eqref{eigf_Lip_H1q_estimate}.
\end{proof}

We are now in the position to prove \eqref{key2}.
\begin{lem}\label{eigf_Lip_H2p}
	The map $\mathbb K\ni k\mapsto p_j(\cdot,k)\in H^2_\#(\cQ)$ is Lipschitz continuous.
\end{lem}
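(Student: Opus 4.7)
The plan is to mimic the proof of Lemma \ref{Lemma_Reg_supH^2}, applied this time to the difference $v := p_j(\cdot,k) - p_j(\cdot,k_0)$. By the same elliptic-equivalence argument as in \eqref{H2_est} (with the shifted gradient $\nabla'_{k_0}$ fixed throughout),
\begin{equation*}
\|v\|_{H^2(\cQ)} \le c\bigl(\|v\|_{L^2(\cQ)} + \|\nabla'_{k_0}\times v\|_{H^1(\cQ)} + \|\nabla'_{k_0}\cdot v\|_{H^1(\cQ)}\bigr),
\end{equation*}
so the task reduces to controlling each of the three right-hand sides by $c|k-k_0|$. Throughout I will use three identities: (a) the definition \eqref{pj} of $p_j$ together with $\nabla'_k = \nabla'_{k_0} + \ri(k-k_0)$; (b) the formula $\nabla'_k\times p_j(\cdot,k) = \ri\omega_j(k)\,q_j(\cdot,k)$ obtained from the eigenvalue identity \eqref{E:vj-eq-L2}; and (c) the relation $\nabla'_k\cdot p_j(\cdot,k) = -\epsilon^{-1}\nabla\epsilon \cdot p_j(\cdot,k)$, which follows from $\nabla'_k\cdot(\epsilon p_j)=0$.

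The $L^2$-bound $\|v\|_{L^2}\le c|k-k_0|$ is obtained from (a) by decomposing
\begin{equation*}
\nabla'_k\times q_j(\cdot,k) - \nabla'_{k_0}\times q_j(\cdot,k_0) = \nabla'_{k_0}\times\bigl(q_j(\cdot,k)-q_j(\cdot,k_0)\bigr) + \ri(k-k_0)\times q_j(\cdot,k),
\end{equation*}
and then combining the $H^1$-Lipschitz continuity of $q_j$ (Lemma \ref{eigf_Lip_H1q}), the Lipschitz continuity of $\omega_j$ (Lemma \ref{eigv_Lip}), a uniform $H^1$-bound on $q_j$ (which comes from $\|q_j\|_{L^2}=1$, $\nabla'_k\cdot q_j=0$, and $\|\nabla'_k\times q_j\|_2^2 \le \|\epsilon\|_\infty\,a_k(q_j,q_j)=\|\epsilon\|_\infty\omega_j^2(k)\le C$ by Lemma \ref{eigv_Lip}), and $\epsilon^{-1}\in L^\infty$. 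The curl of $v$ is, by (a) and (b),
\begin{equation*}
\nabla'_{k_0}\times v = \ri\bigl[\omega_j(k)q_j(\cdot,k)-\omega_j(k_0)q_j(\cdot,k_0)\bigr] - \ri(k-k_0)\times p_j(\cdot,k),
\end{equation*}
whose $H^1$-norm is bounded by $c|k-k_0|$ using the same three lemmas together with the uniform $H^2$-bound on $p_j$ of Lemma \ref{Lemma_Reg_supH^2}. The divergence of $v$ is, by (a) and (c),
\begin{equation*}
\nabla'_{k_0}\cdot v = -\epsilon^{-1}\nabla\epsilon\cdot v - \ri(k-k_0)\cdot p_j(\cdot,k),
\end{equation*}
which, using $\epsilon\in W^{2,\infty}$ and the uniform bound on $p_j$, yields $\|\nabla'_{k_0}\cdot v\|_{H^s(\cQ)} \le c\bigl(\|v\|_{H^s(\cQ)} + |k-k_0|\bigr)$ for $s\in\{0,1\}$.

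The main obstacle is the apparent circularity in the last bound: the $H^1$-estimate on $\nabla'_{k_0}\cdot v$ involves the $H^1$-norm of $v$ itself. I resolve this by a two-step bootstrap. Applying the same reduction with $H^1$ in place of $H^2$ (and $L^2$-norms on the curl and divergence on the right) first gives
\begin{equation*}
\|v\|_{H^1(\cQ)} \le c\bigl(\|v\|_{L^2(\cQ)} + |k-k_0|\bigr) \le c|k-k_0|,
\end{equation*}
which is $H^1$-Lipschitz continuity of $k\mapsto p_j(\cdot,k)$. Feeding this intermediate bound into the $H^1$-estimate on the divergence then closes the $H^2$-argument and yields $\|v\|_{H^2(\cQ)}\le c|k-k_0|$, as required.
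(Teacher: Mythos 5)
Your proposal is correct and follows essentially the same route as the paper's proof: the same three identities (the definition \eqref{pj} of $p_j$, the curl relation $\nabla'_k\times p_j=\ri\omega_j q_j$ from \eqref{E:vj-eq-L2}, and the divergence identity \eqref{divk_pj}), combined with Lemmas \ref{eigv_Lip}, \ref{eigf_Lip_H1q}, \ref{Lemma_Reg_supH^2} and the norm equivalence \eqref{H2_est}, in an $L^2\to H^1\to H^2$ bootstrap. The only difference is cosmetic: you fix $\nabla'_{k_0}$ in the norm equivalence and spell out the two-step bootstrap that the paper dispatches with ``analogously''.
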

\begin{proof}
	Notice that Lemmas \ref{eigv_Lip} and \ref{eigf_Lip_H1q} and the definition $p_j(x,k)=\frac\ri{\epsilon(x)\omega_j(k)}\nabla'_k\times q_j(x,k)$ already imply that the above map with values in $L^2_\#(\cQ)$ is Lipschitz continuous.
	
	First we show that $\mathbb K\ni k\mapsto p_j(\cdot,k)\in H^1_\#(\cQ)$ is Lipschitz continuous. Once again we consider the $\nabla'$-Helmholtz decomposition and estimate separately $\|\nabla'\cdot p_j(\cdot,k)-\nabla'\cdot p_j(\cdot,\ko)\|_2$ and $\|\nabla'\times p_j(\cdot,k)-\nabla'\times p_j(\cdot,\ko)\|_2$. Similarly to \eqref{eigf_Lip_H1q_curl} we may confine ourselves to estimate $\|\nabla'_k\cdot p_j(\cdot,k)-\nabla'_\ko\cdot p_j(\cdot,\ko)\|_2$ and $\|\nabla'_k\times p_j(\cdot,k)-\nabla'_\ko\times p_j(\cdot,\ko)\|_2$.
	
	First, by \eqref{divk_pj} we have
	\begin{equation*}
	\begin{split}
	\|\nabla'_k\cdot p_j(\cdot,k)-\nabla'_\ko\cdot p_j(\cdot,\ko)\|_2&\leq\|\epsilon^{-1}\|_\infty\|\nabla'_k\epsilon\cdot p_j(\cdot,k)-\nabla'_\ko\epsilon\cdot p_j(\cdot,\ko)\|_2\leq c|k-\ko|,
	\end{split}
	\end{equation*}
	applying the triangular inequality and Lemmas \ref{eigv_Lip} and \ref{eigf_Lip_H1q}. Next, noticing that $\nabla'_k\times p_j(\cdot,k)=\ri\omega_j(k)q_j(\cdot,k)$ by \eqref{E:vj-eq-L2}, we may similarly infer the estimate $\|\nabla'_k\times p_j(\cdot,k)-\nabla'_\ko\times p_j(\cdot,\ko)\|_2\leq c|k-\ko|$ and, in turn, the Lipschitz continuity in the $H^1$-norm.
	
	The upgrade to the $H^2$-norm can be deduced analogously, combining the estimates above with \eqref{H2_est}-\eqref{Reg_est:graddiv}.
\end{proof}

\section*{Acknowledgement}
This research is supported by the German Research Foundation, DFG grant No. DO1467/4-1. The authors thank Michael Plum, KIT Karlsruhe, for fruitful discussions.

\bibliographystyle{plain}
\bibliography{bibliography_just}

\end{document}